\newcommand{\RN}{\mathbb{R}} %real numbers
\newcommand{\CN}{\mathbb{C}} %complex numbers
\newcommand{\eps}{\ensuremath\varepsilon}
\newcommand{\suml}{\sum\limits}
\newcommand{\prodl}{\prod\limits}
\newcommand{\ovl}{\overline}
\newcommand{\gr}{\mathop{\mathrm{gr}}\nolimits}
\newcommand{\SL}{\mathop{\mathrm{SL}}\nolimits}
\newcommand{\Hom}{\mathop{\mathrm{Hom}}\nolimits}
\newcommand{\GL}{\mathop{\mathrm{GL}}\nolimits}
\newcommand{\id}{\ensuremath\operatorname{id}}
\newcommand{\Dic}{\mathbb{D}}
\newcommand{\mm}{\ensuremath\mathfrak{m}}
\newcommand{\Spec}{\operatorname{Spec}}
\newcommand{\Span}{\operatorname{Span}}
\newcommand{\ddx}[2]{\frac{\partial #1}{\partial #2}}
\newcommand{\mf}{\mathfrak}
\newcommand{\Der}{\operatorname{Der}}
\newcommand{\mc}[1]{\mathcal{#1}}
\newcommand{\Tor}{\operatorname{Tor}}
\newcommand{\Specm}{\operatorname{Specm}}
\author{Daniil Klyuev}
\title{Deformations of pairs of Kleinian singularities}
\begin{document}
\newtheorem{thr}{Theorem}[section]
\newtheorem*{thr*}{Theorem}
\newtheorem{lem}[thr]{Lemma}
\newtheorem*{lem*}{Lemma}
\newtheorem{cor}[thr]{Corollary}%corollary
\newtheorem{prop}[thr]{Proposition}
\newtheorem{stat}[thr]{Statement}
\newtheorem*{stat*}{Statement}
\newtheorem{example}[thr]{Example}
%РЈСЂР°, РІСЃРµ СЃС‚Р°Р»Рѕ С†РІРµС‚РЅС‹Рј
\theoremstyle{definition}
\newtheorem{defn}[thr]{Definition}
\theoremstyle{remark}
\newtheorem{rem}[thr]{Remark}
\newtheorem*{rem*}{Remark}
\maketitle
\begin{abstract}
Kleinian singularities, i.e., the varieties corresponding to the algebras of invariants of Kleinian groups are of fundamental importance for Algebraic geometry, Representation theory and Singularity theory. The  filtered deformations of these algebras of invariants were classified by Slodowy (the commutative case) and Losev  (the general case). To an inclusion of Kleinian groups, there is the corresponding inclusion of  algebras of invariants. We  classify deformations of these inclusions when the smaller subgroup is normal  in  the larger. 
\end{abstract}
\tableofcontents
\section{Introduction}
A Kleinian singularity is an affine variety of the form $\Spec\CN[u,v]^G$, where $G$ is a finite subgroup of $\SL(2,\CN)$. Kleinian singularities appear in many areas of geometry, algebraic geometry, singularity theory and group theory. Since the action of $G$ does not change the degree of a homogeneous polynomial, $\CN[u,v]^G$ is a graded algebra.

Let us define the notion of a filtered deformation of a graded algebra. 

All algebras are supposed to be associative unital $\CN$-algebras.

\begin{defn}
Suppose that $A$ is a graded algebra. A {\it filtered deformation} of $A$ is a pair $(\mc{A},\chi)$, where $\mc{A}$ is a filtered algebra, and $\chi$ is an isomorphism between $\gr \mc{A}$ and $A$.
\end{defn}

Let us say when two deformations are isomorphic.

\begin{defn}
Suppose that $(\mc{A}_1,\chi_1)$, $(\mc{A}_2,\chi_2)$ are two filtered deformations of $A$, $\phi\colon \mc{A}_1\to \mc{A}_2$ is an isomorphism of filtered algebras. We say that $\phi$ is an isomorphism of deformations if $\chi_2\circ\gr\phi=\chi_1$.
\end{defn}

The main example of deformations of Kleinian singularities are Crawley--Boevey---Holland algebras. They were introduced in their work~\cite{CBH}. 

Suppose that $c$ is an element of $Z(\CN[G])$. We will give a definition of the smash product later, see Definition~\ref{SmashProductDef}.
\begin{defn}
Suppose that $G$ is a Kleinian group. It acts on $\CN\langle u,v\rangle$. Denote by $e$ the element $\frac{1}{|G|}\sum_{g\in G} g$. Consider the algebra $\CN\langle u,v\rangle \# G/(uv-vu-c)$. We can view $e$ as an element of this algebra. The algebra $e(\CN\langle u,v\rangle\# G/(uv-vu-c)) e$ is called a CBH algebra with parameter $c$ and is denoted by $\mathcal{O}_c$.
\end{defn}

We see that $\mc{O}_c$ is a unital algebra with unit $e$. It was proved in~\cite{CBH} that $\mc{O}_c$ is a filtered deformation of $\CN[u,v]^G$.

There exists a natural way of identifying $Z(\CN[G])$ with $\CN\times \mf{h}$, where $\mf{h}$ is a Cartan subalgebra of a simple Lie algebra corresponding to a simply-laced Dynkin diagram. It gives a correspondence between Kleinian groups and simply-laced Dynkin diagrams. This correspondence is called the McKay correspondence. Denote by $W$ the corresponding Weyl group. We see that $W$ acts on $Z(\CN[G])$. It was proved in~\cite{CBH} that:

\begin{enumerate}	
\item
Parameters from $\mf{h}$ correspond to commutative deformations.
\item
For every $c\in Z(\CN[G])$, $w\in W$, $\mc{O}_c$ is isomorphic to $\mc{O}_{wc}$.
\end{enumerate} 

\begin{thr}[Crawley--Boevey---Holland~\cite{CBH}, Kronheimer~\cite{Kronheimer}]
Every commutative filtered deformation of $\CN[u,v]^G$ is isomorphic to $\mc{O}_c$ for some $c\in Z(\CN[G])$ and $\mc{O}_c$ is isomorphic to $\mc{O}_{c'}$ if and only if there exists $w\in W$ such that $c'=wc$.
\end{thr}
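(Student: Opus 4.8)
The plan is to derive the theorem from the Brieskorn--Grothendieck--Slodowy description of the semiuniversal deformation of a simple surface singularity, combined with the two Crawley--Boevey--Holland facts recorded above. First I would recall that $R=\CN[u,v]^G$ is a graded simple (rational double point) surface singularity, hence presentable as a graded hypersurface $\CN[X,Y,Z]/(F)$ with $F$ the corresponding $ADE$ equation, the weights of $X,Y,Z$ being the degrees of the three fundamental invariants. The grading makes $\Spec R$ a cone with a contracting $\mathbb{G}_m$-action, and under the Rees construction a filtered deformation of $R$ is the same datum as a $\mathbb{G}_m$-equivariant flat one-parameter deformation $\mc{X}\to\mathbb{A}^1$ whose central fiber is $R$.

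For surjectivity I would invoke the fundamental theorem (Brieskorn, Grothendieck, Slodowy; realized analytically by Kronheimer) that the semiuniversal deformation of such a singularity has a smooth base canonically isomorphic to $\mf{h}/W$, with $\dim\mf{h}$ equal to the number of nodes of the Dynkin diagram, and that this family is itself $\mathbb{G}_m$-equivariant. By semiuniversality the $\mathbb{A}^1$-family above is pulled back along a $\mathbb{G}_m$-equivariant morphism $\mathbb{A}^1\to\mf{h}/W$; such a morphism is determined by the image of $1\in\mathbb{A}^1$, a single point $[c]\in\mf{h}/W$, i.e. a $W$-orbit in $\mf{h}$. Since the first Crawley--Boevey--Holland fact shows that parameters in the $\mf{h}$-factor of $Z(\CN[G])$ already produce commutative filtered deformations, and a Kodaira--Spencer comparison identifies the CBH family $c\mapsto\mc{O}_c$ with the semiuniversal family over $\mf{h}/W$, any commutative filtered deformation is isomorphic to the fiber $\mc{O}_c$ attached to its classifying orbit.

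For the equivalence $\mc{O}_c\cong\mc{O}_{c'}\Leftrightarrow c'\in Wc$, the implication $c'=wc\Rightarrow\mc{O}_c\cong\mc{O}_{c'}$ is precisely the second Crawley--Boevey--Holland fact. For the converse I would use that $\mf{h}/W$ is the \emph{base} of the semiuniversal family: an isomorphism of filtered deformations $\mc{O}_c\cong\mc{O}_{c'}$ forces their classifying points in $\mf{h}/W$ to coincide, whence $[c]=[c']$ and $c'=wc$ for some $w\in W$. Combining this injectivity with the surjectivity above yields the stated classification.

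The main obstacle is the geometric input: identifying the base of the semiuniversal filtered deformation with $\mf{h}/W$ and ensuring that distinct $W$-orbits give non-isomorphic deformations. This is the deep Brieskorn--Grothendieck--Slodowy theorem, whose analytic incarnation is Kronheimer's hyperk\"ahler construction of ALE spaces, and I would cite it rather than reprove it. The remaining technical point is to check that the abstract semiuniversal family and the explicit CBH family coincide up to the $W$-action --- equivalently, that the classifying map of $c\mapsto\mc{O}_c$ is the quotient $\mf{h}\to\mf{h}/W$ --- which reduces to matching their graded Kodaira--Spencer maps at the central fiber.
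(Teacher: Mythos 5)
Your overall strategy --- reducing everything to the Brieskorn--Grothendieck--Slodowy/Kronheimer description of the semiuniversal deformation over $\mf{h}/W$ together with the two Crawley--Boevey--Holland facts --- is essentially the same deep input the paper relies on: in Section~\ref{UnivAlgSec} the paper constructs Slodowy's universal \emph{graded} deformation $\tilde{A}$ over $\tilde{B}$, and in Theorem~\ref{UniversalCBHAlgebraThr} it identifies $\tilde{A}$ with $\tilde{\mc{O}}^W$ by citing the analytic universality from \cite{CBH} and then algebraizing the classifying map via the $\CN^{\times}$-action. However, your proof of the ``only if'' direction has a genuine gap. You claim that ``an isomorphism of filtered deformations $\mc{O}_c\cong\mc{O}_{c'}$ forces their classifying points in $\mf{h}/W$ to coincide'' because $\mf{h}/W$ is the base of the \emph{semiuniversal} family. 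Semiuniversality does not deliver this: it guarantees the existence of a classifying map and the uniqueness only of its differential at the distinguished point, not the uniqueness of the map itself. In general two distinct maps into a semiuniversal base can pull back to isomorphic deformations (classifying maps are unique only up to automorphisms of the family, which here are plentiful since $\CN[u,v]^G$ has many vector fields), so nothing you have cited rules out $[c]\neq[c']$ with $\mc{O}_c\cong\mc{O}_{c'}$ as filtered deformations.

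What closes this gap --- and it is precisely the algebraic content the paper develops in Sections~\ref{DerSec}--\ref{UnivAlgSec} --- is graded rigidity: $\CN[u,v]^G$ admits no nonzero homogeneous derivations of negative degree (Corollary~\ref{NoDerivationOfHegativeDegCor}). Via Theorem~\ref{NonUniqueMorphismThr} this makes morphisms of graded deformations out of $\tilde{A}$ \emph{unique}, i.e.\ $\tilde{A}$ is a genuine initial object and not merely versal; only then do isomorphism classes of filtered deformations biject with graded homomorphisms $\tilde{B}\to\CN[t]$, i.e.\ with points of $\mf{h}/W$, i.e.\ with $W$-orbits of parameters. Equivalently, in Pinkham's equivariant language, one must know that all $\mathbb{G}_m$-weights on the semiuniversal base are strictly positive, so that the equivariant classifying map is unique. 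The same refinement is silently used in your surjectivity step: plain versality is a statement about germs or artinian bases, and producing a $\mathbb{G}_m$-equivariant classifying map defined on all of $\mathbb{A}^1$ already requires the equivariant (Pinkham-type) version of versality, which you assert but do not justify. Citing the Kodaira--Spencer identification of the CBH family with the semiuniversal one is fair, since the theorem is attributed to Crawley--Boevey--Holland and Kronheimer anyway; but without the rigidity statement the biconditional in the theorem does not follow from what you have invoked.
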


\begin{thr}[Losev~\cite{Loseu}]
Every filtered deformation of $\CN[u,v]^G$ is isomorphic to $\mc{O}_c$ for some $c\in Z(\CN[G])$ and $\mc{O}_c$ is isomorphic to $\mc{O}_{c'}$ if and only if there exists $w\in W$ such that $c'=wc$.
\end{thr}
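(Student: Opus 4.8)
\noindent\emph{Proof strategy.} The plan is to bootstrap from the commutative classification just stated by isolating the single additional ``quantization'' parameter that a noncommutative deformation carries beyond its commutative shadow. Write $Z(\CN[G])=\CN\times\mf{h}$ as in the introduction, where the $\mf{h}$-factor records a commutative (Poisson) deformation and the $\CN$-factor records the noncommutativity level $\lambda$; by fact~(1) the parameters lying in $\mf{h}$ already give commutative algebras. Given an arbitrary filtered deformation $(\mc{A},\chi)$ of $A=\CN[u,v]^G$, let $d$ be the minimal drop in filtration degree realized by a nonzero commutator in $\mc{A}$. The commutator then descends to a nonzero graded Poisson bracket of degree $-d$ on $\gr\mc{A}\cong A$. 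Since $A$ is the algebra of functions on the symplectic quotient $\Spec\CN[u,v]^G$, whose smooth locus carries the symplectic form inherited from $du\wedge dv$ and has singular locus of codimension two, I would first prove that every graded Poisson bracket on $A$ of negative degree is a scalar multiple of this canonical bracket (the space of such brackets being one-dimensional, in degree $-2$). This forces $d=2$ and pins down $\lambda\in\CN$ as the scaling factor; if no nonzero commutator exists, $\mc{A}$ is commutative and the claim is exactly the Crawley--Boevey--Holland--Kronheimer theorem, yielding $\mc{O}_c$ with $c\in\mf{h}$.

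For $\lambda\neq0$ I would normalize $\lambda=1$ by rescaling the generators through the conical $\CN^\times$-action, so that $\mc{A}$ is a filtered quantization of $A$ equipped with its canonical Poisson bracket, and then classify such quantizations. Passing to the Rees algebra of $\mc{A}$ produces a graded, flat $\CN[\hbar]$-deformation of $A$ whose first-order term defines a class in the graded Hochschild cohomology $HH^2(A)$; the key computation is that the summand of $HH^2(A)$ controlling filtered quantizations is canonically identified with $H^2$ of the smooth locus, that is with $\mf{h}$, while the obstructions to integrating a first-order deformation lie in $HH^3(A)$ and vanish in the relevant degrees, by the unobstructedness of quantizations for the minimal (symplectic) resolution of the Kleinian singularity together with its even-cohomology and formality properties. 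This attaches to $\mc{A}$ a period-type invariant in $\mf{h}$, and comparing it with the Crawley--Boevey--Holland family shows that every filtered quantization is isomorphic to some $\mc{O}_c$ with $c=(\lambda,\bar c)$ and $\bar c\in\mf{h}$. I expect this completeness-and-rigidity step to be the main obstacle: one must simultaneously show that the map $c\mapsto\mc{O}_c$ is surjective onto the space of deformations and that no quantization acquires more moduli than the $\mf{h}$-parameter, which is precisely where the control of $HH^2$, the vanishing of the $HH^3$-obstructions, and the $W$-symmetry of the construction have to be combined, and where the full strength of Losev's quantization-of-resolutions technology enters.

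It remains to verify the isomorphism criterion. The ``if'' direction is fact~(2) of the introduction, which provides isomorphisms $\mc{O}_c\cong\mc{O}_{wc}$ for $w\in W$. For ``only if'', an isomorphism of deformations $\phi\colon\mc{O}_c\to\mc{O}_{c'}$ induces the identity on $\gr=A$ under $\chi$, so it preserves the induced Poisson bracket together with its scale; hence the $\CN$-component $\lambda$ is an isomorphism invariant and $\lambda=\lambda'$. The $\mf{h}$-components are then compared through the period invariant constructed above, which is intertwined with the Weyl group action. Since $W$ acts on $Z(\CN[G])$ fixing the $\CN$-factor and as the Weyl group on $\mf{h}$, two parameters yield isomorphic deformations exactly when their $\lambda$'s coincide and their $\mf{h}$-parts are $W$-conjugate, that is when $c'=wc$ for some $w\in W$. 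Assembling the three steps gives the stated classification for all filtered deformations, extending the commutative case.
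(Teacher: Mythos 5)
The paper never proves this statement---it is imported from~\cite{Loseu} as background---so the honest benchmark is the argument the paper attributes to Subsections 3.3--3.5 of~\cite{Loseu} and reproduces (for pairs) in Section~\ref{NoncommutativeNormalSec}. Measured against that, your opening move coincides exactly with the paper's: the one-dimensionality of negative-degree graded Poisson brackets on $\CN[u,v]^G$ and the resulting noncommutativity parameter are Lemma~\ref{PoissonLiftingLem} and Proposition~\ref{ZExistsProp}. After that the routes diverge. You propose the cohomological route: Rees algebra, a first-order class in $HH^2$, vanishing of obstructions in $HH^3$, and a period-type invariant coming from quantizations of the minimal resolution. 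The route the paper follows is entirely different and much more elementary: build an affine scheme $Y$ whose $B$-points are deformations equipped with a chosen lift of a fixed basis (Statement~\ref{RepresentingObjectForFStat}), together with a unipotent group scheme $U$ whose orbits are isomorphism classes; map the CBH family over $\tilde{B}\otimes\CN[z]$ into it, getting $\phi\colon\CN[Y]\to\CN[z]\otimes\tilde{B}\otimes\CN[U]$; observe that $\phi$ is an isomorphism at $z=0$ because that is precisely the already-established commutative classification (Slodowy, Crawley--Boevey--Holland, Kronheimer); and conclude by graded Nakayama that $\phi$ itself is an isomorphism. No Hochschild cohomology, no resolution, no unobstructedness statement: the entire noncommutative classification is bootstrapped from the commutative one along the single variable $z$. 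Your route is the one that generalizes to higher-dimensional symplectic singularities; the paper's route is the one that generalizes to the pairs $\CN[u,v]^{G_2}\subset\CN[u,v]^{G_1}$ that are its actual subject.

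That said, your sketch has genuine gaps exactly at its load-bearing points. The identification of the relevant part of $HH^2(A)$ with $\mf{h}$, the vanishing of obstructions, and the construction of the period invariant are all deferred to ``Losev's quantization-of-resolutions technology''; since the statement being proved is Losev's theorem, this comes close to assuming the result. More seriously, the ``only if'' half of the isomorphism criterion is circular as written: you compare the $\mf{h}$-components ``through the period invariant, which is intertwined with the Weyl group action,'' but the assertion that this invariant is complete---that two parameters give isomorphic deformations exactly when they are $W$-conjugate---is the content of the theorem, not a tool available to prove it. In the scheme-theoretic argument this difficulty dissolves: injectivity on parameters falls out of the isomorphism $\CN[Y]\cong\CN[z]\otimes\tilde{B}\otimes\CN[U]$ combined with the commutative statement on bases, cf.\ the closing theorem of Section~\ref{NoncommutativeNormalSec}, whose second part derives the $c'=wc$ criterion from Statement~\ref{CategoricalQuotientStat} applied to $\CN[z_0,\ldots,z_m]^H\subset\CN[z_0,\ldots,z_m]$.
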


Now we move on to our object of study. Suppose that $G_1\subset G_2$ are finite subgroups of $\SL(2,\CN)$. Then $\CN[u,v]^{G_2}$ is a subset of $\CN[u,v]^{G_1}$. The inclusion $\CN[u,v]^{G_2}\subset\CN[u,v]^{G_1}$ is a homomorphism of graded algebras.

\begin{defn}
Suppose that $i\colon A_2\subset A_1$ is an inclusion of graded algebras, $(\mc{A}_1,\chi_1)$ is a filtered deformation of $A_1$, $\mc{A}_2\subset \mc{A}_1$ is an inclusion of filtered algebras. We say that $(\mc{A}_2,\mc{A}_1,\chi_1)$ is a filtered deformation of $i$ if $\chi_1(\gr\mc{A}_2)=A_2$.
\end{defn}

In this paper we classify filtered deformations of $\CN[u,v]^{G_2}\subset\CN[u,v]^{G_1}$ in the case when $G_1$ is normal in $G_2$.

CBH algebras provide an example of deformations of $\CN[u,v]^{G_2}\subset \CN[u,v]^{G_1}$. Suppose that $c$ is an element of $Z(\CN[G_2])\cap Z(\CN[G_1])$. Then $c$ gives two CBH algebras: one is a deformation of $\CN[u,v]^{G_2}$, the other is a deformation of $\CN[u,v]^{G_1}$. Denote them by $\mc{O}_c^2$, $\mc{O}_c^1$.

\begin{prop}
Suppose $G_1\triangleleft G_2$ are finite subgroups of $\SL(2,\CN)$, $c$ is an element of $Z(\CN[G_1])\cap Z(\CN[G_2])$, $\mc{O}_c^1$ and $\mc{O}_c^2$ are $CBH$-algebras for groups $G_1,G_2$ with parameter $c$. Then there exists an embedding of $\mc{O}_c^2$ into $\mc{O}_c^1$. This embedding is a deformation of $\CN[u,v]^{G_2}\subset \CN[u,v]^{G_1}$.
\end{prop}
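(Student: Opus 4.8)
The plan is to realize $\mc{O}_c^2$ as the algebra of invariants of $\mc{O}_c^1$ under an action of the quotient group $Q=G_2/G_1$, and then take the embedding to be the inclusion of invariants. The starting point is that normality is exactly what makes the averaging idempotent $e_1=\frac{1}{|G_1|}\sum_{g\in G_1}g$ central in $\CN[G_2]$: for $h\in G_2$, conjugation permutes $G_1$, so $he_1h^{-1}=e_1$. Writing $H_i=\CN\langle u,v\rangle\#G_i/(uv-vu-c)$, so that $\mc{O}_c^i=e_iH_ie_i$, I would first record two structural facts. First, $H_1\hookrightarrow H_2$ is an injection of filtered algebras: the inclusion $\CN\langle u,v\rangle\#G_1\hookrightarrow\CN\langle u,v\rangle\#G_2$ carries the relation to the relation, and on associated graded it induces $\CN[u,v]\#G_1\hookrightarrow\CN[u,v]\#G_2$, which is injective, whence the filtered map is injective. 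Second, $e_2=e_1e_2=e_2e_1$ (since $e_2g=e_2$ for all $g\in G_2$), so $e_2$ is an idempotent dominated by $e_1$, and $\mc{O}_c^2=e_2H_2e_2=e_2(e_1H_2e_1)e_2$.

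Next I would build the $Q$-action. For $h\in G_2$ define $\phi_h$ on $\CN\langle u,v\rangle\#G_1$ by $u\mapsto h(u)$, $v\mapsto h(v)$ (the $\SL(2)$-action) and $g\mapsto hgh^{-1}$ on $G_1$. I would check that $\phi_h$ is a well-defined algebra automorphism preserving the relation (since $h\in\SL(2)$ fixes $uv-vu$, as $\det h=1$, and $c$ is central in $\CN[G_2]$), hence descends to $H_1$; that $\phi_h$ fixes $e_1$ and so preserves $\mc{O}_c^1=e_1H_1e_1$; and that $\phi_g$ for $g\in G_1$ is conjugation by $g$, which acts trivially on $e_1H_1e_1$ because $ge_1=e_1g=e_1$. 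Thus $h\mapsto\phi_h$ descends to an action of $Q=G_2/G_1$ on $\mc{O}_c^1$ by filtered automorphisms, inducing on associated graded the standard action of $Q$ on $\CN[u,v]^{G_1}$.

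The crux is to identify $B:=e_1H_2e_1$. Using the centrality of $e_1$ in $\CN[G_2]$ together with the PBW basis $\{u^av^bh\}$ of $H_2$, I would show $B$ is free as a left $\mc{O}_c^1$-module with basis the images $\overline{h}\,e_1$ of coset representatives of $Q$, with multiplication governed by the crossed-product rule $\overline{h}\,\omega=\phi_{\overline h}(\omega)\,\overline h$ for $\omega\in\mc{O}_c^1$; that is, $B\cong\mc{O}_c^1*Q$, the crossed product for the action above (the extension $1\to G_1\to G_2\to Q\to 1$ need not split, but this is harmless since the relevant idempotent comes from an honest group). Indeed $e_2=e_1\cdot\frac{1}{|Q|}\sum_{\overline h}\overline h$ is precisely the averaging idempotent of $Q$ inside $B$, and $\mc{O}_c^2=e_2Be_2$. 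The standard corner-equals-invariants lemma for crossed products then yields an algebra isomorphism $\mc{O}_c^2=e_2Be_2\cong(\mc{O}_c^1)^Q$, under which $\omega\in(\mc{O}_c^1)^Q$ corresponds to $\omega e_2=e_2\omega$; composing with $(\mc{O}_c^1)^Q\subset\mc{O}_c^1$ produces the embedding $\mc{O}_c^2\hookrightarrow\mc{O}_c^1$.

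Finally I would verify that this is a deformation of $i$. Since $Q$ acts by filtered automorphisms and we work in characteristic zero, $\gr$ commutes with passage to $Q$-invariants, so $\gr\big((\mc{O}_c^1)^Q\big)=(\CN[u,v]^{G_1})^Q=\CN[u,v]^{G_2}$, and the embedding induces on associated graded exactly the inclusion $\CN[u,v]^{G_2}\subset\CN[u,v]^{G_1}$, as the definition requires. I expect the third paragraph to be the main obstacle: proving cleanly that $e_1H_2e_1$ is the crossed product $\mc{O}_c^1*Q$, which is where the PBW property of $H_2$ and the centrality of $e_1$ must be combined carefully, and which is the only place where normality of $G_1$ is essential (it is what makes $e_1$ central and $Q$ a group). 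The remaining ingredients—injectivity of $H_1\hookrightarrow H_2$, the automorphisms $\phi_h$, and the corner-equals-invariants lemma—are routine.
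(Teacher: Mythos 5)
Your proof is correct and takes essentially the same route as the paper's: the same conjugation-plus-tautological action of $G_2/G_1$ on $\mc{O}_c^1$, the identification $\mc{O}_c^2\cong(\mc{O}_c^1)^{G_2/G_1}$ through the averaging idempotent of $G_2/G_1$ in a smash product, and the compatibility of taking invariants with passing to the quotient/associated graded to get the deformation property. Your third paragraph, identifying $e_1H_2e_1$ with $\mc{O}_c^1\#(G_2/G_1)$ (untwisted, since $he_1$ depends only on the coset $hG_1$) and locating $e_2$ as the averaging idempotent of the quotient group, is precisely the step the paper's proof leaves implicit as ``easy to construct,'' so it is added detail rather than a different method.
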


We will prove this proposition later.

Consider the image of $Z(\CN[G_1])\cap Z(\CN[G_2])$ under the isomorphism \[Z(\CN[G_1])\cong \CN\times\mf{h}.\] Since \[Z(\CN[G_1])\cap Z(\CN[G_2])=Z(\CN[G_1])^{G_2/G_1},\] its image is $\CN\times\mf{h}^{G_2/G_1}$. 

Every automorphism of a Dynkin diagram gives rise to an automorphism of $\mf{h}$. We will prove that $G_2/G_1$ acts on $\mf{h}$ by automorphisms of this form.

Denote the root system in $\mf{h}$ by $\Phi$ and the corresponding Weyl group by $W$. We have another root system in $\mf{h}^{G_2/G_1}$, defined as follows: $\Phi'=\{\sum_{g\in G_2/G_1}g\alpha\mid\alpha\in\Phi\}\setminus\{0\}$. This root system is called a folded root system. The fact that it is indeed a root system is proved, for example, in~\cite{VO}, solution of Problem 4.4.17.

We will prove that Weyl group $H$ of $\Phi'$ is naturally embedded in $W$. Hence $H$ acts on $Z(\CN[G_1])$.

The main results is as follows:

\begin{thr}
Every filtered deformation of $i$ is of the form $\mc{O}^2_c\subset\mc{O}^1_c$, where $c\in Z(\CN[G_1])\cap Z(\CN[G_2])$. Parameters $c$ and $c'$ give isomorphic deformations if and only if there exists $w\in H$ such that $c'=wc$.
\end{thr}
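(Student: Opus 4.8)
\emph{Setup.} Throughout write $Q=G_2/G_1$; by the discussion above $Q$ acts on $\mf{h}$ through diagram automorphisms, and $Z(\CN[G_1])\cap Z(\CN[G_2])=Z(\CN[G_1])^{Q}$. The plan rests on reducing to Losev's theorem for the single algebra $\mc{A}_1$ and then pinning down the position of the subalgebra $\mc{A}_2$ inside it. The first step is a sharpening of the Proposition: for $c\in Z(\CN[G_1])^{Q}$ the element $c$ is fixed by $Q$, so $Q$ acts on $\mc{O}^1_c$ (through the $\SL(2)$-action on $\CN\langle u,v\rangle$ and conjugation on $G_1$, both of which preserve the relation $uv-vu-c$, since $uv-vu$ is $\SL(2)$-invariant and $c$ is $Q$-fixed). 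I would identify the embedded copy of $\mc{O}^2_c$ with the fixed subalgebra $(\mc{O}^1_c)^{Q}$: on associated graded this is exactly $\CN[u,v]^{G_2}=(\CN[u,v]^{G_1})^{Q}$, so $(\mc{O}^1_c)^{Q}$ is a filtered deformation of $\CN[u,v]^{G_2}$ lying in $\mc{O}^1_c$, and it coincides with the image produced in the Proposition.

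\emph{Existence.} Let $(\mc{A}_2,\mc{A}_1,\chi_1)$ be an arbitrary filtered deformation of $i$. By Losev's theorem there is $c\in Z(\CN[G_1])$ and an isomorphism of deformations $\mc{A}_1\cong\mc{O}^1_c$; transporting $\mc{A}_2$, I may assume $\mc{A}_2\subset\mc{O}^1_c$ is a filtered subalgebra with $\gr\mc{A}_2=\CN[u,v]^{G_2}$. I then claim a rigidity statement: the only such subalgebra is the fixed subalgebra $(\mc{O}^1_c)^{Q}$, and its existence forces $c$ to be $Q$-invariant up to the $W$-action, hence (after replacing $c$ by a suitable $W$-translate) $c\in Z(\CN[G_1])^{Q}$ and $\mc{A}_2=\mc{O}^2_c$. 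The mechanism is that the graded extension $\CN[u,v]^{G_2}\subset\CN[u,v]^{G_1}$ is module-finite and $Q$-Galois, with $Q$ recovered as its automorphism group over $\CN[u,v]^{G_2}$; lifting this Galois structure to the filtered level reconstructs the $Q$-action on $\mc{O}^1_c$ as the automorphisms fixing $\mc{A}_2$ pointwise, which simultaneously produces a lift of the $Q$-action (so that $c$ is $Q$-fixed up to $W$) and identifies $\mc{A}_2$ with the fixed points.

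\emph{Classification.} For $c,c'\in Z(\CN[G_1])^{Q}$, an isomorphism $\phi$ of the pairs restricts to a deformation isomorphism $\mc{O}^1_c\cong\mc{O}^1_{c'}$, so $c'=wc$ for some $w\in W$ by Losev. The extra datum is that $\phi$ carries $(\mc{O}^1_c)^{Q}$ onto $(\mc{O}^1_{c'})^{Q}$; by the Galois reconstruction of $Q$ from the pair, $\phi$ must intertwine the two $Q$-actions. I would therefore recast the problem: isomorphisms of pairs are exactly the $Q$-equivariant deformation isomorphisms of the ambient algebras. The classification then becomes an equivariant refinement of Losev's theorem, namely that a $Q$-equivariant deformation isomorphism $\mc{O}^1_c\cong\mc{O}^1_{c'}$ exists if and only if $c'\in Hc$, where $H\subset W$ is the subgroup compatible with the $Q$-action (the Weyl group of the twisted root system $\Phi'$, embedded in $W$ as above). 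This accounts for the passage from $W$-orbits to the finer $H$-orbits: two points of $\mf{h}^{Q}$ can lie in one $W$-orbit without lying in one $H$-orbit, and requiring $Q$-equivariance is precisely what discards the elements of $W$ that do not commute with $Q$.

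\emph{Main obstacle.} The genuinely new content, and the step I expect to be hardest, is the rigidity together with its equivariant consequence: that the subalgebra deforming $\CN[u,v]^{G_2}$ is forced to be the fixed-point algebra, and that the ambient $Q$-action can be reconstructed and lifted at the filtered level. Losev's theorem is available as a black box for a single algebra, but controlling the relative position of the two algebras, and upgrading Losev's $W$-symmetry to the $Q$-equivariant $H$-symmetry, requires a direct filtered and PBW analysis of $\mc{O}^1_c$ as a module over $\mc{O}^2_c$, together with the finiteness and generic separability of this extension. Once this is in place, both halves of the theorem follow by combining it with Losev's classification.
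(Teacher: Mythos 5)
Your proposal is a plan whose two load-bearing claims --- the ``rigidity'' statement (any filtered subalgebra of $\mc{O}^1_c$ deforming $\CN[u,v]^{G_2}$ equals the fixed subalgebra of a reconstructed $Q$-action, $Q=G_2/G_1$) and the ``equivariant refinement of Losev's theorem'' --- are precisely the hard theorems, and you leave both unproven; acknowledging them as the ``main obstacle'' names the gap but does not fill it. The mechanism you offer for rigidity does not work as stated: knowing that $Q$ is the automorphism group of the \emph{graded} pair $\CN[u,v]^{G_2}\subset\CN[u,v]^{G_1}$ gives no reason why those automorphisms lift to filtered automorphisms of $\mc{O}^1_c$ fixing $\mc{A}_2$ pointwise, nor why a lift is unique. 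In the paper this lifting is exactly Theorem~\ref{MainTheorem} (uniqueness of the bigger deformation), whose proof occupies Sections~\ref{DerSec}--\ref{InjSec}: it needs the derivation-lifting result (Theorem~\ref{MainDerThr}, Corollary~\ref{NoDerivationOfHegativeDegCor}) and the injectivity of multiplication by $q$ (Statements~\ref{StatForMainLemma},~\ref{InjMainStat}), and even then it is established only for \emph{commutative} deformations. To transfer the conclusion to the filtered (noncommutative) setting --- which is the setting you need, since you apply it inside $\mc{O}^1_c$ --- the paper cannot argue directly at all: it builds the parameter scheme $Y$ with its unipotent group $U$ (Statement~\ref{RepresentingObjectForFStat}), uses the Poisson-parameter $z$ (Proposition~\ref{ZExistsProp}, Lemma~\ref{ZequalsZlem}), and deduces universality of the CBH pair from the commutative fiber by graded Nakayama. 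None of this is replaced by anything in your sketch.

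Separately, your explanation of why $H$ appears is factually wrong. You write that ``two points of $\mf{h}^{Q}$ can lie in one $W$-orbit without lying in one $H$-orbit,'' and that imposing $Q$-equivariance discards the offending elements of $W$. The paper proves the opposite (Proposition~\ref{InjectivityOfChi12Prop}): the map $V^{Q}/H\to (V/W)^{Q}$ is a \emph{bijection}, so two $Q$-fixed parameters in one $W$-orbit automatically lie in one $H$-orbit; this coincidence is itself a theorem, proven via dominant elements of the folded root system and Zariski's Main Theorem, not a triviality. Once one has it (together with rigidity), the ``only if'' half of the classification follows from Losev applied to the ambient algebra alone, with no equivariant refinement needed --- but your route asserts the refinement instead of proving it, and rests on the false orbit claim. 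Likewise, your step ``existence of $\mc{A}_2$ forces $c$ to be $Q$-invariant up to $W$'' needs the same dominant-element argument (from $Qc\subset Wc$ one must produce a $Q$-fixed point of $Wc$), which you do not supply. So the proposal, while pointing in a reasonable direction that echoes the paper's Section~\ref{NormalCaseSec} dictionary between deformations of $i$ and $Q$-actions, has genuine gaps at every step that the paper's machinery exists to close.
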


The structure of the paper is as follows. In Sections~\ref{DefSec} and~\ref{InfSec} we define deformations of an algebra and of an inclusion of algebras over a base and recall some technical facts about them. 

We start with commutative case. In section~\ref{DerSec} we prove that each derivation from $\CN[u,v]^{G_2}$ to $\CN[u,v]^{G_1}$ lifts to a derivation of $\CN[u,v]$. In section~\ref{UnivAlgSec} we recall the result of Slodowy on the universal commutative deformation of $\CN[u,v]^G$. In sections~\ref{UniqArrowSec} and~\ref{InjSec} we prove that each commutative deformation $\mc{A}_2\subset\mc{A}_1$ of $\CN[u,v]^{G_2}\subset\CN[u,v]^{G_1}$ is uniquely recovered from $\mc{A}_2$. In section~\ref{NormalCaseSec} we find a universal commutative deformation of $\CN[u,v]^{G_2}\subset\CN[u,v]^{G_1}$ using this result.

Then we deal with noncommutative case. In section~\ref{CBHSec} we recall the definition of a CBH algebra and restate the results of the previous sections in the language of CBH algebras. In section~\ref{NoncommutativeNormalSec} we construct a universal deformation from the universal commutative deformation.

\subsection{Acknowledgments}
I would like to thank Ivan Losev for formulation of the problem, stimulating discussions and for remarks on the previous versions of this paper. I am grateful to Pavel Etingof for his help with rewriting section~\ref{InjSec}. The paper is supported by «Native towns», a social investment program of PJSC «Gazprom Neft»
\section{Definitions and general properties of flat deformations}
\label{DefSec}
Let $R$ be a commutative graded algebra such that $R_0=\CN$, $R_i$ are finite-dimensional and let $\mm=R_{>0}=\suml_{i>0}R_i$ be a maximal ideal of $R$.

\begin{defn}
Let $A$ be a commutative graded algebra. A deformation of $A$ over $R$ is a pair $(\mc{A},\chi)$,where $\mc{A}$ is a graded algebra over $R$, flat as an $R$-module, and $\chi$ is an isomorphism between $\mc{A}/\mm\mc{A}$ and $A$.
\end{defn}
\begin{defn}
Suppose that $(\mc{A},\chi)$ is a deformation of $A$ over $R$, $(\mc{B},\psi)$ is a deformation of $A$ over $S$ and $f$ is a homomorphism of graded algebras from $\mc{A}$ to $\mc{B}$. We say that $f$ is a morphism of deformations if the following holds:
\begin{enumerate}
\item
$f(R)\subset S$
\item
The following triangle is commutative
$$\xymatrix{
\mc{A}/\mc{A}R_{>0}\ar[dr]^{\chi} \ar[r]^{\ovl{f}}& \mc{B}/\mc{B}S_{>0}\ar[d]^{\psi}\\
 & A
}
$$
where $\ovl{f}$ is the homomorphism induced by $f$.
\end{enumerate}
\end{defn}

The notion of a deformation over a base is a generaization of the notion of a filtered deformation:

\begin{defn}
Suppose that $A'=\bigcup_{i=0}^{\infty} A'_{\leq i}$ is a filtered algebra. Its Rees algebra is defined as follows: $\mc{A}=\suml_{i=0}^{\infty} t^i A'_{\leq i}$. It has a structure of $\CN[t]$-algebra.
\end{defn}

We see that the Rees algebra is a free $\CN[t]$-module, $\mc{A}/t\mc{A}\cong \gr A'$ and $\mc{A}/(t-1)\mc{A}\cong A'$. On the other hand, every deformation $\mc{A}$ of $A$ over $\CN[t]$ defines a filtered deformation $\mc{A}/(t-1)\mc{A}$. This construction is an inverse to taking Rees algebra. We conclude that a filtered deformation is the same as a deformation over $\CN[t]$.

%So we have a category $\mc{D}_A$ of deformations of $A$ over base. It will be clear soon that the problem of describing this category reduces to finding an initial object. An initial object in $\mc{D}_A$ is called a universal deformation of $A$.

Now we define a deformation of a homomorphism of graded algebras.
\begin{defn}
Let $f\colon A\to B$ be a homomorphism of graded algebras. Suppose $\mc{A}$ is a deformation of $A$ over $R$, $\mc{B}$ is a deformation of $B$ over $R$, $F\colon\mc{A}\to\mc{B}$ is an $R$-linear homomorphism of graded algebras. We say that $F$ is a deformation of $f$ if the induced morphism $\ovl{F}\colon \mc{A}/\mc{A}\mm\to \mc{B}/\mc{B}\mm$ coincides with $f$ after identifying $\mc{A}/\mc{A}\mm$ with $A$ and $\mc{B}/\mc{B}\mm$ with $B$.
\end{defn}

\begin{defn}
Let $F_1\colon \mc{A}_1\to\mc{B}_1$ be a deformation of $f$ over $R$, $F_2\colon\mc{A}_2\to\mc{B}_2$ be a deformation of $f$ over $S$. Suppose $g\colon\mc{A}_1\to\mc{A}_2$ is a morphism of deformations of $A$, $h\colon\mc{B}_1\to\mc{B}_2$ is a morphism of deformations of $B$. We say that $(g,h)$ is a morphism of deformations of $f$ if the following square commutes
$$\xymatrix{
\mc{A}_1\ar[d]_{g} \ar[r]^{F_1}& \mc{B}_1 \ar[d]^{h}\\
\mc{A}_2 \ar[r]_{F_2} & \mc{B}_2}
$$
\end{defn}

The Rees construction gives a correspondence between filtered deformations of $f\colon A_1\to A_2$ and deformations of $f$ over $\CN[t]$.

%We have the category of deformations of $f$ over a base. As with deformations of algebras, the classification of all deformations reduces to finding a universal deformation.

Now we move for technical statements we will need later.
\begin{lem}
\label{DeformationOfIdLem}
Suppose $A$, $R$ are graded commutative algebras and $\mc{A}$, $\mc{A}_1$, $\mc{A}_2$ are deformations of $A$ over $R$.
\begin{enumerate}
\item
Let $a_i$ be homogeneous elements of $\mc{A}$ such that their images form a basis in $A$. Then $a_i$ form a basis over $R$ in $\mc{A}$.
\item
 Suppose that $\phi\colon\mc{A}_1\to\mc{A}_2$ is a deformation of $\id_A$. In other words, $\phi$ is an $R$-linear homomorphism of deformations of $A$. Then $\phi$ is an isomorphism of deformations of $A$.
\end{enumerate} 
\end{lem}
\begin{proof}
The first statement is standard. 

To prove the second we choose any $R$-basis $\{a_i\}$ of $\mc{A}_1$ provided by the first statement. We note that the images of $\phi(a_i)$ in $A$ equal to the images of $a_i$ on $A$. Using the first statement again we deduce that $\phi(a_i)$ is a $R$-basis of $\mc{A}_2$. Hence $\phi$ is invertible. It follows from definitions that $\phi^{-1}$ is also a morphism of deformations of $A$.
\end{proof}
\begin{lem}
\label{DeformationIndeedLemma}
Let $x_i$ be a homogeneous variable of positive degree, $f$ be a homogeneous element of $\CN[x_1,\ldots,x_n]$, $F$ be a homogeneous element of $R[x_1,\ldots,x_n]$ such that $F-f\in\mm[x_1,\ldots,x_n]$. Then $\mc{A}=R[x_1,\ldots,x_n]/(F)$ is a free $R$-module.
\end{lem}
\begin{proof}
Since $\mc{A}=R[x_1,\ldots,x_n]/(F)$ is a graded $R$-module it is enough to check that $\Tor_1^R(\mc{A},\CN)=0$. The $R$-module $\mc{A}$ has a free resolution $0\to R[x_1,\ldots,x_n]\to R[x_1,\ldots,x_n]\to \mc{A}\to 0$, where the first map is multiplication by $F$. Taking tensor product of this resolution with $\CN$ over $R$ we get $\CN[x_1,\ldots,x_n]\to \CN[x_1,\ldots,x_n]$, where the map is multiplication by $f$. Since multiplication by $f$ is injective we deduce that $\Tor_1^R (\mc{A},\CN)=0$.

%Using the fact that  $\sum (b_i+B^{\geq d+1})g_i=(F+\mm[x_1,\ldots,x_n])(Q+B^{\geq d+1}[x_1,\ldots,x_n])$. After choosing a linear basis in $B^{\geq d}/B^{\geq d+1}$ we see that $g_i$ are not linearly independent in $\CN[x_1,\ldots,x_n]/(f)$.
\end{proof}

\begin{cor}
Let $A=\CN[x_1,\ldots,x_n]/(f)$, where $x_i$ are homogeneous variables, $f$ is a homogeneous polynomial in $x_i$. Let $\mc{A}$ be a commutative deformation of $A$ over $R$. Then there exists a homogeneous polynomial $F\in f+\mm[x_1,\ldots,x_n]\subset R[x_1,\ldots,x_n]$ of degree equal to the degree of $f$ such that $\mc{A}\cong R[x_1,\ldots,x_n]/(F)$. Moreover, this is an isomorphism of deformations of $A$, where the structure of a deformation of $A$ on $R[x_1,\ldots,x_n]/(F)$ is given by Lemma~\ref{DeformationIndeedLemma}.
\end{cor}
\begin{proof}
Let $X_i$ be any homogeneous lift of $x_i\in A$ to $\mc{A}$. Consider a homomorphism of $R$-algebras $\psi\colon R[x_1,\ldots,x_n]\to \mc{A}$, $\psi(x_i)=X_i$. Using graded Nakayama lemma we see that $\psi$ is surjective. Since $f=0$ in $A$ we have $\psi(f)\in\mm\mc{A}$. Since $\psi$ is surjective we can find $H\in\mm[x_1,\ldots,x_n]$ with $\deg H=\deg f$ such that $\psi(H)=\psi(f)$. It follows that $F=f-H$ belongs to the kernel of $\psi$. 

Therefore we obtain from $\psi$ a homomorphism $\phi\colon R[x_1,\ldots,x_n]/(F)\to\mc{A}$, $\phi(x_i)=X_i$. We see that $\phi$ is an $R$-linear morphism of deformations of $A$. It follows from Lemma~\ref{DeformationOfIdLem} that $\phi$ is an isomorphism of deformations of $A$.
\end{proof}

Now we formulate several statements for future use.
\begin{stat}
\label{BaseTensoringStat}
\begin{enumerate}
\item
Suppose that $\mc{A}$ is a deformation of $A$ over $R$, $\phi\colon R\to S$ is a homomorphism of graded algebras. Then $\mc{A}\otimes_R S$ is a deformation of $A$ over $S$. Moreover, the natural homomorphism from $\mc{A}$ to $\mc{A}\otimes_R S$ is a morphism of deformations of $A$.
\item
Suppose that $F\colon\mc{A}\to\mc{B}$ is a deformation of $f\colon A\to B$ over $R$, $\phi\colon R\to S$ is a homomorphism of graded algebras. Then $F\otimes\id\colon \mc{A}\otimes_R S\to \mc{B}\otimes_R S$ is a deformation of $f$ over $S$. Moreover, the pair of natural homomorphism $\mc{A}\to\mc{A}\otimes_R S$, $\mc{B}\to\mc{B}\otimes_R S$ is a morphism of deformations of $f$.
\end{enumerate}

\end{stat}
%If $B_1=B/I$ we will denote $\phi\otimes\id$ by $\phi/I$.
\begin{stat}
\label{EveryMorphismIsABaseChangeStat}
\begin{enumerate}
\item
Suppose that $\mc{A}$, $\mc{B}$ are deformations of $A$ over $R$, $S$ respectively, $\phi\colon\mc{A}\to\mc{B}$ is a morphism of deformations. $\phi|_{R}$ gives a structure of $R$-module on $S$. Consider the natural homomorphism $g\colon\mc{A}\otimes_{R} S\to\mc{B}$. Then $g$ is an isomorphism of deformations. Moreover, the composition $\mc{A}\to\mc{A}\otimes_{R} S\to \mc{B}$ coincides with $\phi$.
\item
Suppose that $F_1\colon\mc{A}_1\to\mc{B}_1$, $F_2\colon\mc{A}_2\to\mc{B}_2$ are deformations of $f\colon A\to B$ over $R$, $S$ respectively. Suppose that $\phi\colon \mc{A}_1\to\mc{A}_2$, $\psi\colon\mc{B}_1\to\mc{B}_2$ is a morphism of deformations of $f$. The map $\phi|_{R}=\psi|_{R}$ gives a structire of $R$-module on $S$. Then the pair of natural homomorphisms $\mc{A}_1\otimes_{R} S\to\mc{A}_2$, $\mc{B}_1\otimes_{R} S\to\mc{B}_2$ is an isomorphism of deformations of $f$. Analogous statement about composition holds.
\end{enumerate}
\end{stat}
\begin{cor}
\label{EqualRestrictionsOnUniversalBaseCor}
Suppose that $\mc{B}_1$ and $\mc{B}_2$ are deformations of $A$ over $S$, $\mc{A}$ is a deformation of $A$ over $R$, $\phi\colon \mc{A}\to\mc{B}_1$, $\psi\colon\mc{A}\to\mc{B}_2$ are morphisms of deformations. Suppose that $\phi(r)=\psi(r)\in S$ for all $r\in R$. Then $\mc{B}_1$ is isomorphic to $\mc{B}_2$ as a deformation of $A$.
\end{cor}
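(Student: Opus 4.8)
The plan is to reduce everything to Statement~\ref{EveryMorphismIsABaseChangeStat}(1), which says that any morphism of deformations of $A$ is, up to a canonical isomorphism, a base change. First I would apply that statement to $\phi\colon\mf{A}''\to\mf{A}$: the restriction $\phi|_{B''}$ equips $B$ with the structure of a $B''$-module, and the induced natural homomorphism $g\colon\mf{A}''\otimes_{B''}B\to\mf{A}$ is an isomorphism of deformations of $A$ over $B$. Applying the same statement to $\phi'\colon\mf{A}''\to\mf{A}'$ yields a $B''$-module structure on $B$ coming from $\phi'|_{B''}$, together with an isomorphism $g'\colon\mf{A}''\otimes_{B''}B\to\mf{A}'$ of deformations of $A$ over $B$.

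The key observation is that these two $B''$-module structures on $B$ are literally the same, because by hypothesis $\phi|_{B''}=\phi'|_{B''}$. Hence $g$ and $g'$ have one and the same source, the deformation $\mf{A}''\otimes_{B''}B$, which is a deformation of $A$ over $B$ by Statement~\ref{BaseTensoringStat}(1). Composing them I obtain $g'\circ g^{-1}\colon\mf{A}\to\mf{A}'$, an isomorphism of deformations of $A$ over $B$, which is the desired isomorphism $\mf{A}\cong\mf{A}'$.

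It remains to check $B$-linearity. Here I would unwind the construction of the natural homomorphism in Statement~\ref{EveryMorphismIsABaseChangeStat}(1): on the distinguished copy of $B$ inside $\mf{A}''\otimes_{B''}B$, namely the elements $1\otimes b$, both $g$ and $g'$ act through the structure maps $B\to\mf{A}$ and $B\to\mf{A}'$, i.e.\ as the inclusion of $B$ into $\mf{A}$ and into $\mf{A}'$ respectively. Consequently $g'\circ g^{-1}$ restricts to the identity on $B$, and since it is already a homomorphism of graded $\CN$-algebras commuting with the identifications on the base, it is $B$-linear.

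I do not expect a genuine obstacle here: the substantive content of the corollary is entirely packaged in Statement~\ref{EveryMorphismIsABaseChangeStat}. The only point requiring a little care is the last paragraph---verifying that the canonical base-change isomorphisms are the identity on the base $B$---so that the resulting map is not merely an abstract isomorphism of deformations but is $B$-linear as claimed.
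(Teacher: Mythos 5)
Your proof is correct and is exactly the argument the paper intends: the paper states this corollary immediately after Statement~\ref{EveryMorphismIsABaseChangeStat} and dismisses the proof as obvious, the implicit derivation being precisely your base-change argument (identical $B''$-module structures on $B$, hence two isomorphisms $g,g'$ out of the same $\mf{A}''\otimes_{B''}B$, composed as $g'\circ g^{-1}$). Your extra care in checking $B$-linearity via the identity on $1\otimes b$ is a worthwhile detail the paper leaves unsaid.
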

\begin{cor}
\label{EqualIffRestictionsAreEqualCor}
Suppose that $\mc{A},\mc{A}'$ are deformations of $A$ over $R$, $S$. Two morphisms of deformations $\phi,\psi$ from $\mc{A}$ to $\mc{A}'$ are equal if and only if $\phi|_R$ is equal to $\psi|_R$.
\end{cor}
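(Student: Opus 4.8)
The forward implication is immediate, so assume $\phi|_B=\psi|_B=:\theta$ and prove $\phi=\psi$. The plan is to use Statement~\ref{EveryMorphismIsABaseChangeStat} to replace the two morphisms by two isomorphisms over a \emph{common} base, reduce to showing that an automorphism of a deformation inducing the identity is trivial, and then exploit the fact that $\CN[u,v]^G$ carries no graded derivation of negative degree.

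By Statement~\ref{EveryMorphismIsABaseChangeStat}(1), $\theta$ equips $B'$ with a structure of $B$-module, and the induced maps $g_\phi,g_\psi\colon \mf{A}\otimes_B B'\to\mf{A}'$, given by $g_\phi(a\otimes b)=\phi(a)b$ and $g_\psi(a\otimes b)=\psi(a)b$, are isomorphisms of deformations of $A$ over $B'$ satisfying $g_\phi\circ\iota=\phi$ and $g_\psi\circ\iota=\psi$, where $\iota\colon\mf{A}\to\mf{C}:=\mf{A}\otimes_B B'$ is $a\mapsto a\otimes 1$. Since $\phi|_B=\psi|_B$, the two $B$-module structures on $B'$ coincide, so the source $\mf{C}$ and the map $\iota$ are literally the same in both cases. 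As $\phi=g_\phi\circ\iota$ and $\psi=g_\psi\circ\iota$ with the same $\iota$, it suffices to show $g_\phi=g_\psi$; equivalently, setting $h:=g_\psi^{-1}\circ g_\phi$, a $B'$-linear graded automorphism of $\mf{C}$ that induces $\id_A$ on the special fiber $\mf{C}/\mm'\mf{C}\cong A$ (where $\mm':=(B')^{>0}$), I must show $h=\id$.

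Write $h=\id+\delta$, where $\delta(\mf{C})\subseteq\mm'\mf{C}$ and $\delta$ preserves the grading. If $\delta\neq 0$, let $k\ge 1$ be maximal with $\delta(\mf{C})\subseteq(\mm')^k\mf{C}$; this exists because in each fixed degree the filtration by powers of $\mm'$ is finite, as $B'_0=\CN$ and each $B'_i$ is finite-dimensional. Since $\delta$ is $B'$-linear it descends to a map $\bar\delta\colon A\to\big((\mm')^k/(\mm')^{k+1}\big)\otimes_\CN A$ (using flatness of $\mf{C}$ over $B'$), and reducing the multiplicativity identity $h(xy)=h(x)h(y)$ modulo $(\mm')^{k+1}\mf{C}$ kills the term $\delta(x)\delta(y)\in(\mm')^{2k}\mf{C}$, so $\bar\delta$ is a derivation. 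Decomposing it into graded components, each is of the form $w\otimes\partial$ with $w\in(\mm')^k/(\mm')^{k+1}$ homogeneous of degree $\ge k\ge 1$ and $\partial$ a graded derivation of $A$ of degree $-\deg w<0$.

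The crux is that $A=\CN[u,v]^G$ has no nonzero graded derivation of negative degree. Indeed, any derivation of $A$ extends uniquely along $\mathrm{Frac}(A)\subseteq\mathrm{Frac}(\CN[u,v])$ (a finite separable extension) and, since $\CN[u,v]$ is integrally closed, restricts to a derivation of $\CN[u,v]$ of the same degree, which is $G$-invariant by uniqueness. But the negative-degree derivations of $\CN[u,v]$ are spanned by $\partial_u,\partial_v$ in degree $-1$, which span the standard two-dimensional representation of $G\subset\SL(2,\CN)$ and therefore have no nonzero invariants. Hence every $\partial$ above vanishes, so $\bar\delta=0$, i.e. $\delta(\mf{C})\subseteq(\mm')^{k+1}\mf{C}$, contradicting the maximality of $k$. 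Thus $\delta=0$, $h=\id$, and $\phi=\psi$. I expect this vanishing of negative-degree derivations to be the only substantive point; the rest is the formal reduction furnished by Statement~\ref{EveryMorphismIsABaseChangeStat} together with the degreewise-finite $\mm'$-adic induction.
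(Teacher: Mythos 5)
Your proof is correct, and it fills in what the paper dismisses with ``The proofs are obvious.'' The skeleton you use is the intended one: Statement~\ref{EveryMorphismIsABaseChangeStat} reduces the two morphisms over different bases to two morphisms $g_\phi,g_\psi\colon\mf{A}\otimes_B B'\to\mf{A}'$ over the common base $B'$, after which everything hinges on rigidity of deformations of $\CN[u,v]^G$. Where you diverge from the paper is in how that rigidity is established. The paper's own route (spread over Sections~\ref{InfSec}--\ref{DerSec}) is: Lemma~\ref{ElementsToEpsLemma} plus Statement~\ref{NonUniqenessArrowDeformationStat} convert a disagreement of two deformations of an arrow into a nonzero homogeneous derivation of negative degree, and Corollary~\ref{NoDerivationOfHegativeDegCor} kills it, the latter resting on Theorem~\ref{MainDerThr}, which lifts derivations of $\CN[u,v]^G$ to $G$-equivariant derivations of $\CN[u,v]$ by a geometric argument (\'etale away from codimension two, Hartogs, K\"ahler differentials). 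You instead (i) run the infinitesimal induction along powers of $\mm'$ rather than along a filtration by ideals with one-dimensional quotients --- essentially the same mechanism --- and (ii) prove the lifting of derivations by field theory: extend to $\operatorname{Frac}(\CN[u,v]^G)$, use separability to extend further, and invoke Seidenberg's theorem that derivations preserve the integral closure. Your (ii) is more elementary and self-contained (it also avoids the paper's slightly awkward forward reference, since the corollary is stated before the derivation machinery is developed), while the paper's Theorem~\ref{MainDerThr} is more general --- it handles $\Der(\CN[u,v]^{G_2},\CN[u,v]^{G_1})$ for an inclusion $G_1\subset G_2$, which is needed elsewhere in the paper, not just the diagonal case $G_1=G_2=G$ you require here. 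One caveat worth recording: both your argument and the statement itself implicitly assume $G\neq\{1\}$; for trivial $G$ the corollary is false (over $B=B'=\CN[t]$ with $\deg t=1$, the automorphism $u\mapsto u+t$, $v\mapsto v$ of $\CN[t][u,v]$ is a morphism of deformations agreeing with the identity on $B$), and correspondingly your final step uses that the tautological representation of a nontrivial $G\subset\SL(2,\CN)$ has no invariants.
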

%\begin{proof}
%It is easy to see that the second statement follows from the first.

%The statement about composition is obvious.

%It is easy to see that $g$ is $B_2$-linear. It remains to prove that $g$ is a morphism of deformations and use Lemma~\ref{DeformationOfIdLem}. The fact that $g$ is a morphism of deformation is easy.
%\end{proof}
\section{The uniqueness of $F$ when $\mc{A}$, $\mc{B}$ are fixed.}
\label{InfSec}
In sections~\ref{InfSec}-\ref{NormalCaseSec} we consider only commutative deformations.

%In this section we suppose that $\mc{A}=R[x_1,\ldots,x_n]/(F(x_1,\ldots,x_n))$ is a deformation of $A=\CN[x_1,\ldots,x_n]/(f(x_1,\ldots,x_n))$ over $R$, where $R$ is a graded algebra. Each $x_i$ has its own positive degree, $f$ and $F$ are homogeneous with respect to this grading. 

\begin{lem}
\label{LemEpsAIsWellDefined}
Suppose that $R$ is finite-dimensional and there exists nonzero homogeneous $\eps\in R$ such that $\eps\mm=0$, $a$ is an element of $A$, $\tilde{a}$ is a lift of $a$ to $\mc{A}$. Then $\eps\tilde{a}$ does not depend on $a$ and $a\mapsto \eps\tilde{a}$ is a bijection from $A$ to $\eps\mc{A}$.
\end{lem}
\begin{proof}
Any two lifts of $a$ differ by an element of $\mm\mc{A}$. This proves the first statement. The second statement follows from the fact that $\mc{A}$ is a free $R$-module.
\end{proof}

So for $a\in A$ we can define $\eps a\in \eps\mc{A}$ and for $a\in \eps\mc{A}$ we can define $a/\eps\in A$.

 %Hence if we have an expression $\eps a$, we can replace $a$ with its image in $\mc{A}/\mc{A}\mm\cong A$.

\begin{lem}
\label{ElementsToEpsLemma}
Suppose that $(a_1,\ldots,a_n)$ and $(b_1,\ldots,b_n)$ are non-equal ordered sets of elements of $B$ such that $a_i+\mm=b_i+\mm$. Then there exist homogeneous ideals $I\subset J$ of $B$ such that
\begin{enumerate}
\item
$(a_1+I,\ldots,a_n+I)\neq (b_1+I,\ldots, b_n+I)$
\item
$(a_1+J,\ldots,a_n+J)=(b_1+J,\ldots,b_n+J)$
\item
The kernel of projection $B/I \to B/J$ is one-dimensional. %If $\eps$ is a non-zero element of the kernel, then $\eps(B/I)$ is one-dimensional.
\end{enumerate}
\end{lem}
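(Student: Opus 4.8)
The plan is to pass to the differences $c_i = a_i - b_i$. Since $a_i + \mm = b_i + \mm$, each $c_i$ lies in $\mm$, and since the two tuples are distinct, not every $c_i$ vanishes. In these terms the three requirements become purely ideal-theoretic: find homogeneous ideals $I \subset J$ with $\dim_\CN J/I = 1$ such that every $c_i \in J$ (this yields condition 2, as $a_i - b_i = c_i \in J$) while some $c_i \notin I$ (this yields condition 1). Condition 3 is then automatic, since $I \subset J$ forces the kernel of $B/I \to B/J$ to be exactly $J/I$.

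First I would take $J$ to be the homogeneous ideal generated by all homogeneous components $(c_i)_d$ of all the $c_i$. Then every $c_i \in J$, and $J$ is a nonzero homogeneous ideal contained in $\mm$. The key structural input is a graded Nakayama argument: if $J = \mm J$, then choosing the smallest degree $d_0$ with $J_{d_0} \neq 0$ gives $(\mm J)_{d_0} = \sum_{e\geq 1} B_e J_{d_0 - e} = 0$, because nothing in $\mm$ can lower degree into $d_0$; this contradicts $J_{d_0} \neq 0$. Hence $J \neq \mm J$, so there is a degree $m$ with $J_m \neq (\mm J)_m$. Since a degree-$m$ generating product $b\cdot (c_i)_d$ is a scalar multiple of $(c_i)_m$ when $\deg b = 0$ and lies in $\mm J$ otherwise, one has $J_m = \Span\{(c_i)_m : i\} + (\mm J)_m$; therefore some component $(c_i)_m$ must fail to lie in $(\mm J)_m$.

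Next I would carve out the one-dimensional quotient in that single degree $m$. Set $I_d = J_d$ for $d \neq m$, and let $I_m$ be a codimension-one subspace of $J_m$ containing $(\mm J)_m$ but avoiding $(c_i)_m$; such a subspace exists precisely because the image of $(c_i)_m$ in $J_m/(\mm J)_m$ is a nonzero vector, so a hyperplane through $(\mm J)_m$ missing it can be chosen. Then $\dim_\CN J/I = 1$ and $J/I$ is concentrated in degree $m$; every $c_j$ still lies in $J$; and $c_i \notin I$, because its degree-$m$ component escapes $I_m$. This delivers all three conclusions.

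The step I expect to be the real crux is verifying that the $I$ just defined is genuinely a $B$-ideal rather than merely a graded subspace. For $e \geq 1$ the only delicate containment is $B_e I_d \subseteq I_{e+d}$ with $e + d = m$ (so $d < m$): here $I_d = J_d$ and $B_e J_d \subseteq (\mm J)_m \subseteq I_m = I_{e+d}$, so the requirement $(\mm J)_m \subseteq I_m$ imposed above is exactly what makes $I$ closed under multiplication. In every other degree the containment reduces immediately to $J$ being an ideal and to $I_d = J_d$. Once this verification is in place the lemma follows.
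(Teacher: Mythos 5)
Your proof is correct: the reduction to the differences $c_i=a_i-b_i$, the graded Nakayama step showing $J\neq\mm J$, the identification $J_m=\Span\{(c_i)_m\}+(\mm J)_m$, and the verification that $I$ is genuinely an ideal (via the containment $B_eJ_d\subseteq(\mm J)_m\subseteq I_m$ when $e+d=m$, $e\geq 1$) are all sound, and together they deliver the three conclusions. You do, however, take a different route from the paper, whose proof is much shorter: it lets $d$ be the maximal positive integer such that the two tuples still agree modulo $B^{\geq d}$ --- equivalently, the minimal degree in which some $c_i$ has a nonzero homogeneous component --- and sets $J=B^{\geq d}$; one then takes $I=B^{\geq d+1}+V$ with $V\subset B_d$ a hyperplane missing $(c_i)_d$, and here the ideal-verification is immediate, since any positive-degree element multiplied into $B_d$ lands in $B^{\geq d+1}$. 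So the paper locates the critical degree by minimality along the canonical filtration $\{B^{\geq d}\}$, whereas you work with the (generally much smaller) ideal generated by the homogeneous components of the differences and locate the degree via Nakayama; the cost is exactly the extra ideal-verification you flagged as the crux, and the payoff is a pair $I\subset J$ attached intrinsically to the differences rather than to the ambient grading. Note also that your Nakayama detour can be short-circuited: the minimal degree $d_0$ with $J_{d_0}\neq 0$ already satisfies $(\mm J)_{d_0}=0\neq J_{d_0}$, and this $d_0$ coincides with the paper's $d$.
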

\begin{proof}
Let $d$ be the maximal positive integer such that $(a_1+B^{\geq d},\ldots, a_n+B^{\geq d})=(b_1+B^{\geq d},\ldots,b_n+B^{\geq d})$. Now is easy to find such a pair $I\subset J$ with $J=B^{\geq d}$.
\end{proof}
%\begin{lem}
%\label{FiniteDimensionLemma}
%There exists a positive integer $N$ with the following property. Suppose $B_2$ is a commutative graded algebra, $\mc{A}_2$ is a deformation of $A$ over $B_2$, $p_2$ is a projection from $\mc{A}_2$ onto $\mc{A}_2/\mc{A}_2B_2^{\geq N}$. Then $(p_2)_*\colon \Hom_{\mathcal{D}_A}(\mc{A},\mc{A}_2)\to \Hom_{\mathcal{D}_A}(\mc{A},\mc{A}_2/\mc{A}_2B_2^{\geq N})$ is an injection.
%\end{lem}
%\begin{proof}
%It is easy to see that $N=\deg F+1$ has this property.
%\end{proof}

\begin{prop}
\label{PropTwoDeformationsThenDerivation}
Let $F_1,F_2\colon\mc{A}\to\mc{B}$ be two deformations of $f\colon A\to B$ over $R$ such that for all $x\in \mc{A}$ we have $F_1(x)-F_2(x)\in \eps\mc{B}$. Then there exists a unique map $d\colon A\to B$ such that $\eps d(x)=(F_1-F_2)(\tilde{x})$, where $\tilde{x}$ is any lift of $x$ to $\mc{A}$. Moreover, $d$ is a homogeneous derivation of degree $-\deg\eps$.
\end{prop}
\begin{proof}
Consider the map $D=F_1-F_2$. This map is $R$-linear and satisfies $D(a)D(b)=0$ for any $a,b\in \mc{A}_1$. Then \begin{equation}
\label{EqBigDIsDerivation}
D(ab)=F_1(a)D(b)+D(a)F_2(b)=F_1(a)D(b)+D(a)F_1(b)=F_2(a)D(b)+D(a)F_2(b)
\end{equation} for any $a,b\in\mc{A}_1$.

We see that $D(\mm\mc{A})=0$, so $d(x)=D(\tilde{x})/\eps$ is well-defined.

It follows from~\eqref{EqBigDIsDerivation} that $d$ is a derivation. Since $F_1$ and $F_2$ are homogeneous maps the degree of $d$ equals to $-\deg\eps$.
\end{proof}
\begin{cor}
\label{CorNonUniqenessArrowDeformation}
Let $F_1,F_2$ be two distinct deformations of $f\colon A_1\to A_2$ over $B$. Then there exists a non-zero derivation of $A_1$ into $A_2$ of negative degree.
\end{cor}
The corollary easily follows from Proposition~\ref{PropTwoDeformationsThenDerivation} and Lemma~\ref{ElementsToEpsLemma}.

The following theorem follows from Theorem 2.4 in~\cite{Slodowy}.
\begin{thr}
\label{ExistenceOfMorphismOfDeformationsThr}
Let $p$ be a homogeneous element of $\CN[x_1,\ldots,x_n]$. Denote $\CN[x_1,\ldots,x_n]/(p)$ by $A$. Suppose that there exist homogeneous elements \[u_1,\ldots,u_m\in\CN[x_1,\ldots,x_n]\] of degree less than $\deg p$ such that their images in $$\CN[x_1,\ldots,x_n]/(\ddx{p}{x_1},\ddx{p}{x_2},\ldots,\ddx{p}{x_n})$$ form a basis. Suppose that $R=\CN[y_1,\ldots,y_m]$, \[\mc{A}_0=R[x_1,\ldots,x_n]/(P(x_1,\ldots,x_n,y_1,\ldots,y_m)),\] where the degree of $y_i$ equals $\deg f-\deg u_i$, \[P(x_1,\ldots,x_n,y_1,\ldots,y_m)=p(x_1,\ldots,x_n)-\sum_{i=1}^m u_i(x_1,\ldots,x_n)y_i.\] Then $\mc{A}_0$ is a universal commutative deformation of $A$. In other words, any other commutative deformation is obtained via a unique base change from $\mc{A}_0$.
\end{thr}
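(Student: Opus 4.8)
The plan is to build the morphism $\phi\colon\mf{A}_1\to\mf{A}_2$ by hand. Since $\mf{A}_1=B_1[x_1,\ldots,x_n]/(F)$ with $F=f-\sum_i u_iy_i$ and $B_1=\CN[y_1,\ldots,y_m]$ free, to give a graded algebra homomorphism $\phi$ with $\phi(B_1)\subseteq B_2$ it suffices to choose homogeneous images $\xi_j:=\phi(x_j)\in\mf{A}_2$ of degree $\deg x_j$ and $\beta_i:=\phi(y_i)\in(B_2)_{\deg y_i}$, subject only to the single relation $\phi(F)=0$, i.e.
\[
f(\xi_1,\ldots,\xi_n)=\suml_{i=1}^m u_i(\xi_1,\ldots,\xi_n)\,\beta_i\quad\text{in }\mf{A}_2 .
\]
If in addition each $\xi_j$ reduces to $x_j$ modulo $\mm_2:=B_2^{>0}$ (automatic once $\beta_i\in\mm_2$, which holds as $\deg y_i>0$), then $\ovl\phi=\id_A$ and $\phi$ is a morphism of deformations. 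So everything reduces to solving the displayed equation with $\xi_j$ lifting $x_j$.

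I would solve it by successive approximation along the $\mm_2$-adic filtration. Start from arbitrary homogeneous lifts $\xi_j^{(0)}$ of $x_j$ and $\beta_i=0$; then the error $f(\xi^{(0)})-\sum u_i(\xi^{(0)})\beta_i=f(\xi^{(0)})$ lies in $\mm_2\mf{A}_2$, since it vanishes in $A$. Inductively, suppose the current data make the error $h:=f(\xi)-\sum_i u_i(\xi)\beta_i$ lie in $\mm_2^{k}\mf{A}_2$. Replacing $\xi_j$ by $\xi_j+\delta_j$ with $\delta_j\in(\mm_2^{k}\mf{A}_2)_{\deg x_j}$ and $\beta_i$ by $\beta_i+\gamma_i$ with $\gamma_i\in(\mm_2^{k})_{\deg y_i}$ changes $h$, modulo $\mm_2^{k+1}\mf{A}_2$, by
\[
\suml_{j}\ddx{f}{x_j}(x)\,\delta_j-\suml_i u_i(x)\,\gamma_i ,
\]
because all cross terms and the variation of $u_i(\xi)$ land in $\mm_2^{k+1}\mf{A}_2$. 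The heart of the argument is that one can always choose $\delta_j,\gamma_i$ so that this expression equals $-h\bmod\mm_2^{k+1}\mf{A}_2$, which pushes the error into $\mm_2^{k+1}\mf{A}_2$.

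To see that the correction exists, I would use flatness of $\mf{A}_2$ over $B_2$ to identify $\mm_2^{k}\mf{A}_2/\mm_2^{k+1}\mf{A}_2\cong(\mm_2^{k}/\mm_2^{k+1})\otimes_\CN A$ as graded spaces. Under this identification $h\bmod\mm_2^{k+1}\mf{A}_2$ is a degree-$\deg f$ element of $(\mm_2^{k}/\mm_2^{k+1})\otimes A$, the $\delta_j$-directions sweep out $(\mm_2^k/\mm_2^{k+1})\otimes J_A$, where $J_A$ is the image in $A$ of the Jacobian ideal $(\ddx{f}{x_1},\ldots,\ddx{f}{x_n})$, and the $\gamma_i$-directions sweep out $(\mm_2^k/\mm_2^{k+1})\otimes\Span_\CN\{u_i\}$. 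It therefore suffices that $J_A+\Span_\CN\{u_i\}=A$, degree by degree. By Euler's relation for the homogeneous $f$ one has $f\in(\ddx{f}{x_1},\ldots,\ddx{f}{x_n})$, so $A/J_A\cong\CN[x_1,\ldots,x_n]/(\ddx{f}{x_1},\ldots,\ddx{f}{x_n})$, in which the images of the $u_i$ form a basis by hypothesis; since the $u_i$ are homogeneous this spanning respects the grading, so the required $\delta_j,\gamma_i$ exist at each step.

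Finally I would address convergence, which is where the possibly infinite-dimensional $B_2$ enters: since $\mm_2=B_2^{>0}$ one has $\mm_2^{k}\subseteq B_2^{\geq k}$, so $(\mm_2^{k}\mf{A}_2)_{\deg x_j}=0$ for $k>\deg x_j$ and $(\mm_2^{k})_{\deg y_i}=0$ for $k>\deg y_i$. Thus in each fixed degree only finitely many corrections are nonzero, the processes for $\xi_j$ and $\beta_i$ terminate, and the resulting homogeneous elements satisfy the displayed equation exactly; setting $x_j\mapsto\xi_j$, $y_i\mapsto\beta_i$ then yields the desired morphism of deformations. The main obstacle is the spanning step of the previous paragraph: making precise, with all degrees matched, that varying the base parameters $\beta_i$ together with the Jacobian directions exhausts $A$, which is exactly the (mini)versality encoded by the choice of the $u_i$.
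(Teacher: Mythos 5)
Your proof is correct and is essentially the paper's argument: both proceed by successive infinitesimal correction of the images of the $x_j$ and $y_i$, and both rest on exactly the same key point, namely that the first-order variation $\suml_j \ddx{f}{x_j}\,\delta_j-\suml_i u_i\,\gamma_i$ can realize any element of $A$ because the $u_i$ project to a basis of the Jacobian ring (your explicit appeal to the weighted Euler relation, giving $f\in(\ddx{f}{x_1},\ldots,\ddx{f}{x_n})$, fills in what the paper dismisses as obvious). The only organizational difference is that you induct along the $\mm$-adic filtration of $B_2$, using flatness to identify $\mm^k\mf{A}_2/\mm^{k+1}\mf{A}_2$ with $(\mm^k/\mm^{k+1})\otimes_{\CN}A$ and the grading to force termination, whereas the paper refines $B_2$ by a chain of homogeneous ideals with one-dimensional quotients and lifts the morphism across one square-zero extension $\eps$ (with $\eps\mm=0$) at a time.
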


\section{Structure of $\Der_{\CN}(\CN[u,v]^{G_2},\CN[u,v]^{G_1})$}
\label{DerSec}
In this section $G_1\subset G_2$ are finite subgroups of $\SL(2,\CN)$.

Define a map $r\colon\Der_{\CN}(\CN[u,v],\CN[u,v])\to \Der_{\CN}(\CN[u,v]^{G_2},\CN[u,v])$ as follows: $r(D)=D|_{\CN[u,v]^{G_2}}$. We see that $r$ preserves degrees. 

We are going to prove the next theorem:
\begin{thr}
\label{MainDerThr}
\begin{enumerate}
\item
$r$ is a bijection.
\item
Suppose that $G_1\subset G_2$. Then $r^{-1}(\Der_{\CN}(\CN[u,v]^{G_2},\CN[u,v]^{G_1}))$ consists of all $G_1$-equivariant derivations of $\CN[u,v]$.
\end{enumerate}
\end{thr}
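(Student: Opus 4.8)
The plan is to prove part (1) first and then derive part (2) from the injectivity half of (1). For the injectivity of $r$, I would pass to fraction fields: any $D\in\Der_\CN(\CN[u,v],\CN[u,v])$ extends uniquely to a derivation of $\CN(u,v)$, and the extension $\CN(u,v)/\CN(u,v)^{G_2}$ is finite Galois with group $G_2$, hence separable in characteristic $0$. Since a finite separable field extension admits no nonzero derivations over the base field, and $\CN(u,v)^{G_2}=\mathrm{Frac}(\CN[u,v]^{G_2})$, the hypothesis $r(D)=0$ forces the extended derivation to vanish on $\CN(u,v)^{G_2}$ and therefore on all of $\CN(u,v)$; thus $D=0$.

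For surjectivity, given $\delta\in\Der_\CN(\CN[u,v]^{G_2},\CN[u,v])$ I would again extend it to a derivation $\hat\delta$ of $\CN(u,v)$, uniquely determined by separability. The heart of the argument, and the step I expect to be the main obstacle, is showing that $\hat\delta$ is \emph{regular}, i.e. that $\hat\delta(u),\hat\delta(v)\in\CN[u,v]$. The geometric input is that every nonidentity element of $G_2\subset\SL(2,\CN)$ has only the origin as fixed point, so $G_2$ acts freely on $\CN^2\setminus\{0\}$ and the quotient map $\CN^2\setminus\{0\}\to(\Spec\CN[u,v]^{G_2})\setminus\{0\}$ is étale. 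Since $\CN[u,v]$ is normal it equals $\bigcap_{\mathrm{ht}\,P=1}\CN[u,v]_P$, so it suffices to prove $\hat\delta(u),\hat\delta(v)\in\CN[u,v]_P$ for each height-one prime $P$. Because the origin has height two, the curve cut out by $P$ meets the étale locus, so the induced extension of discrete valuation rings $\CN[u,v]^{G_2}_{P\cap\CN[u,v]^{G_2}}\hookrightarrow\CN[u,v]_P$ is étale; for an étale extension the restriction map on derivations is a bijection, so the localization of $\delta$ extends to a derivation of $\CN[u,v]_P$ that, by uniqueness, coincides with $\hat\delta$. This yields regularity at every $P$, whence $\hat\delta$ restricts to the required $D\in\Der_\CN(\CN[u,v],\CN[u,v])$ with $r(D)=\delta$.

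For part (2) I would argue both inclusions. If $D$ is $G_1$-equivariant and $f\in\CN[u,v]^{G_2}\subseteq\CN[u,v]^{G_1}$, then $gD(f)=D(gf)=D(f)$ for every $g\in G_1$, so $D(f)\in\CN[u,v]^{G_1}$ and hence $r(D)\in\Der_\CN(\CN[u,v]^{G_2},\CN[u,v]^{G_1})$. Conversely, suppose $D=r^{-1}(\delta)$ with $\delta(\CN[u,v]^{G_2})\subseteq\CN[u,v]^{G_1}$, and for $g\in G_1$ let $g\cdot D$ be the derivation $f\mapsto gD(g^{-1}f)$. On $\CN[u,v]^{G_2}$ we have $g^{-1}f=f$ and $D(f)\in\CN[u,v]^{G_1}$, so $(g\cdot D)(f)=gD(f)=D(f)=\delta(f)$; thus $g\cdot D$ and $D$ are two derivations of $\CN[u,v]$ with the same restriction to $\CN[u,v]^{G_2}$. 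The injectivity of $r$ established in part (1) then gives $g\cdot D=D$ for all $g\in G_1$, i.e. $D$ is $G_1$-equivariant. This closes the equivalence, and I expect this part to be routine once part (1) is in hand.
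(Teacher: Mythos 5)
Your proposal is correct, and it reaches the theorem by a route that shares the paper's essential geometric inputs but packages them with different machinery. The paper proves a general proposition: for a finite dominant morphism $\phi\colon X\to Y$ with $X$ smooth, $Y$ smooth off a codimension-two subset $Z$, and $\phi$ \'etale off $\phi^{-1}(Z)$, the map $D\mapsto D\circ\phi^*$ is a bijection $\Der(\CN[X])\to\Der(\CN[Y],\CN[X])$; this is established by covering $Y\setminus Z$ by affines, gluing and extending derivations with a Hartogs-type lemma, and using the isomorphism $\CN[X]\otimes_{\CN[Y]}\Omega_{\CN[Y]/\CN}\cong\Omega_{\CN[X]/\CN}$ for \'etale maps. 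You use the same two facts --- freeness of the $G_2$-action off the origin (hence \'etaleness of the quotient map there) and bijectivity of restriction of derivations along \'etale maps --- but you replace the cover-and-Hartogs step with commutative algebra: injectivity of $r$ comes for free from separability of $\CN(u,v)/\CN(u,v)^{G_2}$, and surjectivity from normality, via $\CN[u,v]=\bigcap_{\mathrm{ht}\,P=1}\CN[u,v]_P$, checking regularity of the field-theoretic extension $\hat\delta$ one height-one prime at a time (each such prime avoids the origin, which has height two, so the corresponding extension of local rings is flat and unramified and the \'etale extension lemma applies). This buys a shorter, self-contained argument that never leaves ring theory and needs no general proposition; the paper's route buys a reusable geometric statement about morphisms \'etale away from codimension two, which is stated in the generality of arbitrary varieties. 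Your part (2) is likewise a minor variant of the paper's: the paper forms the average $\frac{1}{|G_1|}\sum_{g\in G_1}gDg^{-1}$ and invokes injectivity of $r$ once, while you apply injectivity to each conjugate $gDg^{-1}$ separately; both are sound, and in both the real content is the injectivity established in part (1).
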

The theorem is proved below in this section.
\begin{cor}
\label{NoDerivationOfHegativeDegCor}
Suppose that $G_1\subset G_2$ are non-trivial finite subgroups of $\SL(2,\CN)$. Then there are no non-zero homogeneous derivations of $\CN[u,v]^{G_2}$ into $\CN[u,v]^{G_1}$ of negative degree 
\end{cor}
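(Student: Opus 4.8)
The plan is to reduce the statement to a representation-theoretic fact about the standard two-dimensional representation via Theorem~\ref{MainDerThr}. Let $\delta$ be a homogeneous derivation of $\CN[u,v]^{G_2}$ into $\CN[u,v]^{G_1}$ of negative degree. Viewing $\delta$ as an element of $\Der_{\CN}(\CN[u,v]^{G_2},\CN[u,v])$, part~(1) of the theorem gives a unique derivation $D=r^{-1}(\delta)$ of $\CN[u,v]$, and since $r$ preserves degrees, $D$ is homogeneous of the same negative degree. Because $\delta$ actually lands in $\CN[u,v]^{G_1}$, part~(2) tells us that $D$ is $G_1$-equivariant. As $r$ is a bijection, $\delta=0$ if and only if $D=0$, so it suffices to show that there is no non-zero $G_1$-equivariant homogeneous derivation of $\CN[u,v]$ of negative degree.

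First I would observe that a homogeneous derivation $D$ of $\CN[u,v]$ is determined by the homogeneous elements $D(u),D(v)$, which have degree $\deg D+1$ since $\deg u=\deg v=1$. If $\deg D\le -2$ then $D(u),D(v)$ would have negative degree and hence vanish, forcing $D=0$. The only remaining case is $\deg D=-1$, where $D(u),D(v)\in\CN$ are constants and $D=a\partial_u+b\partial_v$ is a constant-coefficient derivation. The space of such derivations is two-dimensional and, as a $G_1$-module under the conjugation action $g\cdot D=g\circ D\circ g^{-1}$, is isomorphic to the dual $V^{\ast}$ of the standard representation $V=\CN u\oplus\CN v$ of $G_1\subset\SL(2)$. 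Being $G_1$-equivariant means precisely being a fixed point, so I must check that $(V^{\ast})^{G_1}=0$.

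The key point is the elementary fact that a non-trivial finite subgroup of $\SL(2,\CN)$ has no non-zero invariant vectors in $V$ (equivalently in $V^{\ast}\cong V$). Indeed, any non-identity element $g\in G_1$ has finite order, hence is diagonalizable with eigenvalues $\lambda,\lambda^{-1}$; if $g$ fixed a non-zero vector then $1$ would be an eigenvalue, forcing $\lambda=1$ and thus $g=\id$, a contradiction. Therefore $V^{\langle g\rangle}=0$ for every non-identity $g$, so $V^{G_1}=0$ and likewise $(V^{\ast})^{G_1}=0$. Hence the only degree $-1$ equivariant derivation is $0$, which combined with the degree analysis shows $D=0$ and therefore $\delta=0$. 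There is no real obstacle here beyond correctly identifying the degree $-1$ derivations with the standard representation and recalling that finite subgroups of $\SL(2)$ act on $\CN^2$ without fixed vectors; the substance of the corollary is entirely packaged into Theorem~\ref{MainDerThr}.
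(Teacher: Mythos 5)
Your proof is correct and follows essentially the same route as the paper: both reduce via Theorem~\ref{MainDerThr} to showing there is no non-zero $G_1$-equivariant homogeneous derivation of $\CN[u,v]$ of negative degree, and both settle this by restricting to the degree-one part $\Span(u,v)$ and observing that the tautological representation of a non-trivial finite subgroup of $\SL(2)$ admits no non-zero invariant (equivalently, no non-zero equivariant map to the trivial representation $\CN$). The only difference is bookkeeping: you split off the case $\deg D\le -2$ and spell out, via eigenvalues, why $V^{G_1}=0$, facts the paper treats as obvious in a single stroke.
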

\begin{proof}
Using the theorem we can reformulate the statement as follows: there are no non-zero homogeneous $G_1$-equivariant derivations of $\CN[u,v]$ of negative degree. Assume the converse. Chose any nonzero homoheneous $G_1$-equivariant derivation of $\CN[u,v]$ of negative degree. Restricting it to $(\CN[u,v])_1=\Span(u,v)$ we get a nonzero operator $D\colon \Span(u,v)\to \CN$ intertwining action of $G_1$. The space $\CN$ is a trivial representation of $G_1$, the space $\Span(u,v)$ is a tautological representation of $G_1$. There is no trivial representation inside tautological, so $D=0$.
\end{proof}
Combining this with Corollary~\ref{CorNonUniqenessArrowDeformation} we get the next
\begin{cor}
\label{UniqueArrowDeformationCor}
Suppose $i\colon \CN[u,v]^{G_1}\to \CN[u,v]^{G_2}$ is a homomorphism of graded algebras, $\mc{A}_1$, $\mc{A}_2$ are deformations of $\CN[x,y]^{G_1}$, $\CN[x,y]^{G_2}$ over $B$. Suppose that $F_1,F_2\colon \mc{A}_1\to \mc{A}_2$ are deformations of $i$ over $B$. Then $F_1=F_2$.
\end{cor}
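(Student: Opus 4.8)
The plan is to prove uniqueness by contradiction: from any two distinct deformations of $i$ I will manufacture a homogeneous derivation of negative degree between the two invariant algebras, which cannot exist. Suppose $\phi_1\neq\phi_2$ are both deformations of $i$ between $\mf{A}_1$ and $\mf{A}_2$. The difference is only visible infinitesimally, so I would first pass to a quotient of the base that isolates it: applying Lemma~\ref{ElementsToEpsLemma} to the coordinates of $\phi_1$ and $\phi_2$ produces homogeneous ideals $I\subset J$ of $B$ with one-dimensional quotient on which the two maps still disagree. Base-changing along $B\to B/I$ via Statement~\ref{BaseTensoringStat}, I land in the setting of Statement~\ref{NonUniqenessArrowDeformationStat}: a base carrying a homogeneous $\eps$ with $\eps\mm=0$, two deformations of $i$ whose difference lies in $\eps\mf{A}_2$, and which are still distinct. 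This is precisely the hypothesis packaged in the corollary following Statement~\ref{NonUniqenessArrowDeformationStat}, so I may instead quote that corollary directly.

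Statement~\ref{NonUniqenessArrowDeformationStat} then says the normalized difference $d$ of $\phi_1$ and $\phi_2$ is a non-zero homogeneous derivation of degree $-\deg\eps<0$ between the underlying graded algebras. Since $\phi_1,\phi_2$ deform the inclusion $\CN[u,v]^{G_2}\subset\CN[u,v]^{G_1}$, the derivation $d$ is a non-zero homogeneous derivation of $\CN[u,v]^{G_2}$ into $\CN[u,v]^{G_1}$ of negative degree. But Corollary~\ref{NoDerivationOfHegativeDegCor} asserts that no such derivation exists (its content is Theorem~\ref{MainDerThr}, lifting $d$ to a $G_1$-equivariant derivation of $\CN[u,v]$, together with the fact that the tautological $G_1$-module $\Span(u,v)$ has no trivial subrepresentation). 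This contradiction forces $\phi_1=\phi_2$.

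I do not expect a genuine obstacle: all the substance has already been established in Theorem~\ref{MainDerThr} and the infinitesimal analysis of Section~\ref{InfSec}, so the corollary is a formal combination of the corollary following Statement~\ref{NonUniqenessArrowDeformationStat} with Corollary~\ref{NoDerivationOfHegativeDegCor}. The only point needing care is bookkeeping the source and target of $d$: the derivation runs from the source of $i$ to its target, and Corollary~\ref{NoDerivationOfHegativeDegCor} is stated exactly for derivations of $\CN[u,v]^{G_2}$ into $\CN[u,v]^{G_1}$, so one must read $i$ as the natural inclusion $\CN[u,v]^{G_2}\hookrightarrow\CN[u,v]^{G_1}$ for the two results to line up verbatim.
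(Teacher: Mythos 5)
Your proposal is correct and is essentially the paper's own argument: the paper proves this corollary precisely by combining Statement~\ref{NonUniqenessArrowDeformationStat} (via its corollary, which uses Lemma~\ref{ElementsToEpsLemma} to reduce to the infinitesimal case) with Corollary~\ref{NoDerivationOfHegativeDegCor}. Your bookkeeping remark about reading $i$ as the inclusion $\CN[u,v]^{G_2}\hookrightarrow\CN[u,v]^{G_1}$ correctly resolves the notational mismatch in the statement, so nothing is missing.
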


Suppose that $X$ is a smooth affine variety and a finite group $G$ acts on $X$ algebraically. The following fact follows from Proposition 4.11 in~\cite{Drezet}.
\begin{stat}
\label{CategoricalQuotientStat}
Denote $\Spec \CN[X]^G$ by $X/G$. Let $\pi\colon X\to X/G$ be the quotient morphism of algebraic varieties corresponding to inclusion $\CN[X]^G\subset \CN[X]$. Then the following holds
\begin{enumerate}
\item
$\pi$ is finite.
\item
Each fiber of $\pi$ is a single orbit of action of $G$.
\item
$Y$ is smooth in the points corresponding to free orbits of $G$.
\item
$\pi$ is \'etale in the points with trivial stabilizer.
\end{enumerate} 
\end{stat}

Suppose that $\phi\colon X\to Y$ is a morphism of algebraic varieties, $D$ is an element of $\Der(\CN[X])$.  Then $D\circ\phi^*$ belongs to $\Der(\CN[Y],\CN[X])$. So we have a mapping from $\Der(\CN[X])$ to $\Der(\CN[Y],\CN[X])$. Denote it by $\Phi$.
\begin{prop}
\label{PropDerivationsAlmostEtale}
Suppose that $X,Y$ are irreducible affine algebraic varieties, $X$ is smooth, $\phi\colon X\to Y$ is a finite dominant morphism. Suppose that there exists a codimension two subvariety $Z$ of $Y$ such that
\begin{enumerate}
\item
$Y\setminus Z$ is smooth.
\item
$\phi|_{X\setminus\phi^{-1}(Z)}$ is \'etale.
\end{enumerate}
Then $\Phi\colon \Der(\CN[X])\to \Der(\CN[Y],\CN[X])$ is a bijection.
\end{prop}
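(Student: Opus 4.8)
The plan is to reinterpret $\Phi$ sheaf-theoretically and reduce the statement to an extension (Hartogs-type) argument across the codimension-two locus $\phi^{-1}(Z)$. First I would identify both sides with global sections of coherent sheaves on $X$. Since $X$ is affine, $\Der(\CN[X])=\Gamma(X,T_X)=\Hom_{\CN[X]}(\Omega_X,\mathcal{O}_X)$, while the adjunction $\Hom_{\CN[Y]}(\Omega_Y,\CN[X])=\Hom_{\CN[X]}(\phi^*\Omega_Y,\CN[X])$ identifies $\Der(\CN[Y],\CN[X])$ with $\Gamma(X,\mc{F})$, where $\mc{F}=\mc{H}om_{\mathcal{O}_X}(\phi^*\Omega_Y,\mathcal{O}_X)$. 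Checking the effect on generators $dy$, one sees that under these identifications $\Phi$ is exactly $\mc{H}om(d\phi,\mathcal{O}_X)$, the map induced by the canonical cotangent morphism $d\phi\colon\phi^*\Omega_Y\to\Omega_X$: indeed $D\circ\phi^*$ sends $y$ to $D(\phi^*y)$, which is precisely $\tilde D\circ d\phi$ applied to $dy$. Thus everything is governed by $d\phi$.

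Second, I would analyze $d\phi$ over the open set $U=X\setminus\phi^{-1}(Z)=\phi^{-1}(Y\setminus Z)$. By hypothesis $Y\setminus Z$ is smooth and $\phi|_U\colon U\to Y\setminus Z$ is \'etale between smooth varieties, so $d\phi$ is an isomorphism over $U$. Consequently $\Phi$ restricts to an isomorphism of sheaves $T_X|_U\xrightarrow{\sim}\mc{F}|_U$, hence to an isomorphism on $\Gamma(U,-)$.

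Third comes the extension step, which I expect to be the crux. Because $\phi$ is finite and dominant between irreducible varieties, $\dim X=\dim Y$ and $\dim\phi^{-1}(Z)\le\dim Z=\dim Y-2$, so $\operatorname{codim}_X\phi^{-1}(Z)\ge 2$. Since $X$ is smooth, hence normal, the locally free (in particular reflexive) sheaf $T_X$ satisfies algebraic Hartogs: the restriction $\Gamma(X,T_X)\to\Gamma(U,T_X)$ is an isomorphism. On the other hand $\mc{F}$, being the dual of a coherent sheaf on the integral scheme $X$, is torsion-free, so $\Gamma(X,\mc{F})\to\Gamma(U,\mc{F})$ is injective.

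Finally I would assemble these facts in the commuting square formed by restriction to $U$ and the two horizontal maps induced by $\Phi$. Injectivity follows since the composite of $\Phi_X$ with the injection $\Gamma(X,\mc{F})\to\Gamma(U,\mc{F})$ equals $\Gamma(X,T_X)\xrightarrow{\sim}\Gamma(U,T_X)\xrightarrow{\sim}\Gamma(U,\mc{F})$. For surjectivity, given $t\in\Gamma(X,\mc{F})$ I transport $t|_U$ through $(\Phi|_U)^{-1}$ and the Hartogs isomorphism to a global section $s\in\Gamma(X,T_X)$; then $\Phi(s)$ and $t$ agree on $U$, and torsion-freeness of $\mc{F}$ forces $\Phi(s)=t$. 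The main obstacle is genuinely this codimension-two extension: it is what lets a vector field defined only away from $\phi^{-1}(Z)$ be recovered on all of $X$, and it rests on the normality of $X$ together with the finiteness of $\phi$ (to guarantee $\operatorname{codim}\phi^{-1}(Z)\ge 2$).
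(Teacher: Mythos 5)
Your proof is correct and takes essentially the same route as the paper's: both identify $\Phi$ with the map dual to the canonical morphism $\phi^*\Omega_Y\to\Omega_X$, use that this morphism is an isomorphism over the \'etale locus, and then extend across the codimension-two set $\phi^{-1}(Z)$ by a Hartogs-type argument. The only difference is packaging --- the paper covers $Y\setminus Z$ by open affines and glues derivations via an explicit lemma, while you phrase the same extension step through reflexivity of $T_X$ and torsion-freeness of $\mc{H}om_{\mathcal{O}_X}(\phi^*\Omega_Y,\mathcal{O}_X)$.
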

\begin{proof}
Suppose that $Y\setminus Z=\bigcup\limits_{i=1}^n Y_i$, where $Y_i$ are open affine subsets of $Y$. Denote $\phi^{-1}(Y_i)$ by $X_i$. Then $Y_i$ is smooth and $\phi|_{X_i}$ is \'etale for all $i$ from $1$ to $n$.

We will need the following lemma.

\begin{lem*}
Suppose that the same conditions hold. Suppose that $D_i$ are elements of $\Der(\CN[Y_i],\CN[X_i])$ such that $D_i|_{Y_i\cap Y_j}=D_j|_{Y_i\cap Y_j}$ for all $i,j$ from $1$ to $n$. Then there exists a unique $D\in\Der(\CN[Y],\CN[X])$ such that $D|_{Y_i}=D_i$.
\end{lem*}
\begin{proof}
Let $f\in \CN[Y]$. We should have $D(f)=D_i(f)$ for all $i=1,2,\ldots, n$. Since $f\in \CN[Y_i\cap Y_j]$ we have $D_i(f)=D_j(f)$ for all $i,j$. So $D(f)=D_i(f)$ is a well-defined derivation from $\CN[Y]$ to $\CN(X)$.

It remains to check that $D(f)$ indeed belongs to $\CN[X]$. Since $D(f)=D_i(f)$ for all $i$ function $D(f)$ is regular on $\bigcup_{i=1}^n X_i$. It follows from Hartog's theorem that $D(f)$ belongs to $\CN[X]$.
\end{proof}
%\begin{proof}
%Uniqueness is obvious.

%Suppose that $f$ is an element of $\CN[Y]$. So $D_i(f|_{Y_i})$ is an element of $\CN[X_i]$. We see that $D_i(f|_{Y_i})|_{X_i\cap X_j}=D_i|_{Y_i\cap Y_j}(f|_{Y_i\cap Y_j})=D_j|_{Y_i\cap Y_j}(f|_{Y_i\cap Y_j})=D_j(f|_{Y_j})|_{X_i \cap X_j}$. Using smoothness of $X$ and applying Hartogs theorem we get an element $g$ of $\CN[X]$ such that $g|_{X_i}=D_i(f|_{Y_i})$. Define $D(f)$ as $g$. It is easy to see that $D$ is a derivation.
%\end{proof}
Define $\Phi_i\colon\Der(\CN[X_i])\to\Der(\CN[Y_i],\CN[X_i])$ in the same way as $\Phi$. Using lemma we see that bijectivity of $\Phi$ follows from bijectivity of $\Phi_i$.
%Suppose that $D$ belongs to $\Der(\CN[X],\CN[X])$. So we have $D_i=D|_{\CN[X_i]}\in\Der(\CN[X_i])$, $\Phi_i(D_i)\in\Der(\CN[Y_i],\CN[X_i])$. It is easy to see that $\Phi_i(D_i)=\Phi(D)_{Y_i}$.

So we can assume that $X, Y$ are affine, smooth and $\phi$ is \'etale. It follows that $h\colon\CN[X]\otimes_{\CN[Y]}\Omega_{\CN[Y]/\CN} \to\Omega_{\CN[X]/\CN}$, $h(c\otimes db)=c d\phi^*(b)$, is an isomorphim of $\CN[X]$-modules. 
Applying $\Hom_{\CN[X]}(-,\CN[X])$ we obtain a bijection \[h^*\colon\Hom_{\CN[X]}(\Omega_{\CN[X]/\CN},\CN[X])\xrightarrow{\sim} \Hom_{\CN[X]}(\CN[X]\otimes_{\CN[Y]}\Omega_{\CN[Y]/\CN},\CN[X])\] We see that $\Hom_{\CN[X]}(\Omega_{\CN[X]/\CN},\CN[X])$ is isomorphic to $\Der(\CN[X],\CN[X])$ and $\Hom_{\CN[X]}(\CN[X]\otimes_{\CN[Y]}\Omega_{\CN[Y]/\CN},\CN[X])$ is isomorphic to $\Der(\CN[Y],\CN[X])$. It is not hard to prove that the following diagram, where the top arrow is $h^*$, commutes:
$$ \xymatrix{
\Hom_{\CN[X]}(\Omega_{\CN[X]/\CN},\CN[X])\ar[d]\ar[r] & \Hom_{\CN[X]}(\CN[X]\otimes_{\CN[Y]}\Omega_{\CN[Y]/\CN},\CN[X])\ar[d]\\
\Der(\CN[X],\CN[X])\ar[r]^{\Phi} & \Der(\CN[Y],\CN[X])
}$$
Hence $\Phi$ is an isomorphism.
\end{proof}

\begin{proof}[Proof of Theorem~\ref{MainDerThr}]
Let $X=\CN^2$. It follows from Statement~\ref{CategoricalQuotientStat} that the quotient morphism $X\to X/G$ satisfies conditions of Proposition~\ref{PropDerivationsAlmostEtale}. The first part of theorem follows. 

To prove the second part we note that if $D$ is a derivation of $\CN[u,v]$ such that $D|_{\CN[u,v]^{G_2}}\in \Der(\CN[u,v]^{G_2},\CN[u,v]^{G_1})$, then \[(\frac{1}{|G_1|}\sum_{g\in G_1} gDg^{-1})|_{\CN[u,v]^{G_2}}=D|_{\CN[u,v]^{G_2}}.\] The map $\frac{1}{|G_1|}\sum_{g\in G_1} gDg^{-1}$ is also a derivation. Since $r$ is a bijection we deduce that $D=\frac{1}{|G_1|}\sum_{g\in G_1} gDg^{-1}$. It follows that $D$ is $G_1$-equivariant.
\end{proof}
\section{Universal deformations of Kleinian singulatities}
\label{UnivAlgSec}
Suppose that $G$ is a finite subgroup of $\SL(2,\CN)$. We want to formulate several properties of the universal commutative deformation of $\CN[u,v]^G$ for future use. The classification of universal deformations of Kleinian singularities is a result of Slodowy~\cite{Slodowy}. It is well-known (see~\cite{VinbergPopov}, subsection 0.13 or~\cite{Burban}, for example) that $\CN[u,v]^G\cong \CN[x,y,z]/f(x,y,z)$, where all possible combinations of $G,f,\deg x,\deg y,\deg z$ are as follows:
\begin{enumerate}
\item
$G=C_n$, $f=x^n+yz$, $\deg x=2$, $\deg y=n$, $\deg z=n$
\item
$G=\mathbb{D}_n$, $f=xy^2+z^2+x^{n+1}$, $\deg x=4$, $\deg y=2n$, $\deg z=2n+2$
\item
$G=\mathbb{T}$, $f=x^4+y^3+z^2$, $\deg x=6$, $\deg y=8$, $\deg z=12$
\item
$G=\mathbb{O}$, $f=x^3y+y^3+z^2$, $\deg x=8$, $\deg y=12$, $\deg z=18$
\item
$G=\mathbb{I}$, $f=x^5+y^3+z^2$, $\deg x=12$, $\deg y=20$, $\deg z=30$
\end{enumerate} 

\begin{defn}
Suppose that $M$ is a module over a ring $R$, $M'$ is a submodule of $M$. If every nonzero submodule of $M$ has nonzero intersection with $M'$, we call $M'$ an essential submodule.
\end{defn}

\begin{stat}
\label{StatSocle}
The quotient $\CN[x,y,z]/(\ddx{f}{x},\ddx{f}{y},\ddx{f}{z})$ has a simple socle. In other words, there exists an element $a_M$ of $\CN[x,y,z]/(\ddx{f}{x},\ddx{f}{y},\ddx{f}{z})$ such that $\CN a_M$ is an essential submodule of $\CN[x,y,z]/(\ddx{f}{x},\ddx{f}{y},\ddx{f}{z})$.
\end{stat}
\begin{proof}
Write down all possible $f$: $x^n+yz$, $xy^2+z^2+x^{n+1}$, $x^4+y^3+z^2$, $x^3y+y^3+z^2$, $x^5+y^3+z^2$. It is easy to check that the following elements have the desired property: $x^{n-2}$, $x^n$, $x^2y$, $x^4$, $x^3y$.
\end{proof}
\begin{rem}
We see that $\deg a_M=\deg f-4$.
\end{rem}
Let $u_1,\ldots,u_m$ be homogeneous elements of $\CN[x,y,z]$ such that their images in $\CN[x,y,z]/(\ddx{f}{x},\ddx fy,\ddx fz)$ form a linear basis. Suppose that $R_0=\CN[y_1,\ldots,y_m]$, $\mc{A}_0=R_0[x,y,z]/(f(x,y,z)-\sum_{j=1}^m y_ju_j)$.
It follows that $\mc{A}_0$ is a deformation of $\CN[u,v]^G$ over $R_0$ satisfying the conditions of Theorem~\ref{ExistenceOfMorphismOfDeformationsThr}, so $\mc{A}_0$ is a universal commutative deformation of $\CN[u,v]^G$.

\begin{lem}
\label{DeterminedByBLem}
Let $\mc{A}$ be a deformation of $\CN[u,v]^G\cong \CN[x,y,z]/(f)$ over $R$. Then there exist unique $r_1,\ldots,r_m$ such that $\mc{A}$ is isomorphic to $R[x,y,z]/(f+\sum_{j=1}^m r_iu_i)$ as a deformation.
\end{lem}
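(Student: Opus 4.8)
The plan is to deduce everything from the universal property of $\tilde{A}$ together with the description of all morphisms as base changes (Statement~\ref{EveryMorphismIsABaseChangeStat}). First I recall that $\tilde{A}$ is the universal deformation: by Theorem~\ref{ExistenceOfMorphismOfDeformationsThr} there exists a morphism of deformations $\phi\colon\tilde{A}\to A$, and by Theorem~\ref{NonUniqueMorphismThr} combined with Corollary~\ref{NoDerivationOfHegativeDegCor} applied to $G_1=G_2=G$ (which rules out non-zero derivations of $\CN[u,v]^G$ of negative degree) this morphism is unique. Thus there is exactly one morphism of deformations $\phi\colon\tilde{A}\to A$.

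For existence I would apply Statement~\ref{EveryMorphismIsABaseChangeStat}(1) to $\phi$. Setting $b_j:=-\phi(y_j)\in B$, the restriction $\phi|_{\tilde{B}}\colon\tilde{B}\to B$ sends $y_j\mapsto -b_j$ and equips $B$ with a $\tilde{B}$-module structure, and the statement produces an isomorphism of deformations $A\cong\tilde{A}\otimes_{\tilde{B}}B$. Since base change commutes with passing to the quotient by $(f-\sum_j y_j u_j)$, the right-hand side equals $B[x,y,z]/(f-\sum_j(-b_j)u_j)=B[x,y,z]/(f+\sum_j b_j u_j)$, which is the desired presentation; the sign is merely bookkeeping to match the $+$ in the statement.

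For uniqueness, suppose $A\cong B[x,y,z]/(f+\sum_j b_j u_j)$ and $A\cong B[x,y,z]/(f+\sum_j b'_j u_j)$ as deformations. Each presentation exhibits $A$ as a base change of $\tilde{A}$ along the homomorphism $\tilde{B}\to B$ sending $y_j\mapsto -b_j$ (respectively $y_j\mapsto -b'_j$), so composing the canonical base-change morphism with the given isomorphism yields two morphisms of deformations $\tilde{A}\to A$ whose restrictions to $\tilde{B}$ are $y_j\mapsto -b_j$ and $y_j\mapsto -b'_j$. But the morphism $\tilde{A}\to A$ is unique, so these two morphisms coincide; hence their restrictions to $\tilde{B}$ agree and $b_j=b'_j$ for all $j$.

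The step I expect to require the most care is the uniqueness argument, since it hinges on $\tilde{A}$ being genuinely initial rather than merely admitting some morphism to $A$; this is exactly what Corollary~\ref{NoDerivationOfHegativeDegCor} provides via Theorem~\ref{NonUniqueMorphismThr}. A secondary point to check honestly is that the isomorphisms appearing in the two presentations are $B$-linear and respect the identification of the special fibre with $\CN[u,v]^G$, so that composing them with the base-change maps really does yield morphisms of deformations to which the universal property applies.
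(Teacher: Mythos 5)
Correct, and this is evidently the intended argument: the paper omits the proof as ``straightforward,'' but the lemma sits immediately after the universality of $\tilde{A}$ precisely so that one combines the unique morphism of deformations $\phi\colon\tilde{A}\to A$ with Statement~\ref{EveryMorphismIsABaseChangeStat} to get $A\cong\tilde{A}\otimes_{\tilde{B}}B=B[x,y,z]/(f-\sum_j\phi(y_j)u_j)$, reading off existence with $b_j=-\phi(y_j)$ and uniqueness from the uniqueness of $\phi|_{\tilde{B}}$, exactly as you do. The two points you flag for care are the right ones, and both are covered by the paper's toolkit: initiality of $\tilde{A}$ follows from Theorem~\ref{NonUniqueMorphismThr} together with Corollary~\ref{NoDerivationOfHegativeDegCor}, and the isomorphisms in the two presentations are $B$-linear and compatible with the fibre identification by the paper's convention for isomorphisms of deformations over a fixed base (cf.\ Lemma~\ref{DeformationOfIdLem}), so composing them with the base-change morphisms of Statement~\ref{BaseTensoringStat} does produce morphisms of deformations to which universality applies.
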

The proof is straightforward.

\begin{defn}
The previous lemma gives us a surjection $\pi\colon R[x,y,z]\to \mc{A}$. We will call this surjection {\it canonical}.
\end{defn}

%Consider the following category $\mathcal{A}$. Objects of $\mathcal{A}$ are ordered sets $(B,b_1,\ldots,b_m)$, where $B$ is a graded algebra over $\CN$ such that $B_i$ are finite-dimensional and $B_0$ is one-dimensional, $b_i$ are homogeneous elements of degree $\deg f-\deg u_i$. The set of morphism from $(B,b_1,\ldots,b_m)$ to $(B',b_1',\ldots,b_m')$ consists of all morphisms of graded algebras $\phi$ from $B$ to $B'$ such that $\phi(b_i)=b_i'$ for $i=1,\cdots,n$.
%\begin{thr}
%The categories $\mathcal{A}$ and $\mathcal{D}_{\CN[u,v]^G}$ are equivalent.
%\end{thr}
%\begin{proof}
%Let $F$ be a functor from $\mathcal{D}_{\CN[u,v]^G}$ to $\mathcal{A}$ such that $F(\mc{A})=(B,b_1,\ldots,b_n)$ if $\mc{A}$ is a deformation of $\CN[u,v]^G$ over $B$ isomorphic to $B[x,y,z]/(f(x,y,z)-\sum b_iu_i)$. $F$ sends morphism $\phi\colon \mc{A}\to\mc{A}'$ to $\phi|_{B}$.

%Let $G$ be a functor from $\mathcal{A}$ to $\mathcal{D}_{\CN[u,v]^G}$ such that $G(B,b_1,\ldots,b_n)=B[x,y,z]/(f-\sum b_iu_i)$. Suppose that homomorphism of graded algebras $\phi\colon B\to B'$ corresponds to a morphism in $\mathcal{A}$. Then $\phi$ can be extended in a natural way to a morphism of deformations from $B[x,y,z]/(f-\sum b_iu_i)$ to $B'[x,y,z]/(f-\sum b'_iu_i)$. It is easy to see that $FG$ is identity and $GF$ is naturally isomorphic to identity.
%\end{proof}
\section{The uniqueness of the bigger deformation}
\label{UniqArrowSec}
Let $G_1\subset G_2$ be finite subgroups of $\SL(2,\CN)$. The following theorem is the main step in classifying commutative deformations.
\begin{thr}
\label{MainTheorem}
Suppose that $R$ is a graded commutative algebra, $\mc{B}$ is a deformation of $\CN[u,v]^{G_2}$ over $R$, $F_1\colon \mc{B}\to \mc{A}_1,F_2\colon \mc{B}\to\mc{A}_2$ are two deformations of $i\colon\CN[u,v]^{G_2}\to\CN[u,v]^{G_1}$ over $R$. Then there exists an isomorphism of deformations $g\colon \mc{A}_1\to\mc{A}_2$ such that $F_2=gF_1$.
\end{thr}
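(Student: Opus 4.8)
The plan is to deduce the statement from the rigidity results of Sections~\ref{InfSec}--\ref{DerSec}, in two reductions followed by an infinitesimal comparison. First I would observe that it is enough to produce \emph{any} isomorphism of deformations $\tau\colon A_1^2\to A_1^1$ of $\CN[u,v]^{G_1}$ over $B$: given such a $\tau$, both $\iota_1$ and $\tau\iota_2$ are deformations of $i$ from $A_2$ to $A_1^1$, hence coincide by Corollary~\ref{UniqueArrowDeformationCor}, which is exactly the required identity $\iota_1=\tau\iota_2$. To produce $\tau$, let $\tA_1$ over $\tB_1$ be the universal deformation of $\CN[u,v]^{G_1}$ from Section~\ref{UnivAlgSec}. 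As an initial object it admits unique morphisms $\phi_k\colon\tA_1\to A_1^k$, with base components $c_k:=\phi_k|_{\tB_1}\colon\tB_1\to B$. If $c_1=c_2$, then Corollary~\ref{EqualRestrictionsOnUniversalBaseCor} provides a $B$-linear isomorphism of deformations $A_1^1\cong A_1^2$, i.e.\ the desired $\tau$. So the whole theorem reduces to the equality $c_1=c_2$.

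To prove $c_1=c_2$, I would assume the contrary. The images in $B$ of the (positive-degree) generators of $\tB_1$ agree modulo $\mm$, so Lemma~\ref{ElementsToEpsLemma}, applied after truncating $B$ to a finite-dimensional graded quotient in the relevant degrees, yields homogeneous ideals $I\subset J$ with $J/I=\CN\eps$ one-dimensional, $\eps\mm=0$, $c_1\equiv c_2\pmod J$ and $c_1\not\equiv c_2\pmod I$. Passing to $\ovl B=B/I$ and using Lemma~\ref{DeterminedByBLem} I write $A_1^k=\ovl B[x,y,z]/(F_1^k)$ with $F_1^k=f_1+\sum_j b_j^{(k)}u_j$ and $b_j^{(2)}=b_j^{(1)}+\eps\beta_j$, the $\beta_j\in\CN$ not all zero, while $A_2=\ovl B[X,Y,Z]/(f_2+\sum_l a_lu_l')$ has $k$-independent parameters $a_l$. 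Over $B/J$ the two deformations of $i$ coincide by Corollary~\ref{UniqueArrowDeformationCor}, so the lifts of $\iota_1,\iota_2$ may be chosen to agree modulo $J$, and their difference is $\eps$ times a well-defined map in negative degree $-\deg\eps$.

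Now I would exploit that each $\iota_k$ is an algebra homomorphism: the defining relation of $A_2$ must vanish, giving $f_2(\iota_k X,\iota_k Y,\iota_k Z)=G_kF_1^k$ in $\ovl B[x,y,z]$. Taking the $\eps$-linear part of the difference of these identities for $k=2$ and $k=1$ (using $\eps^2=0$, $\eps\mm=0$) and reducing modulo $\mm$, I arrive at an identity
\[
\ddx{f_2}{X}(p,q,r)\,\xi_0+\ddx{f_2}{Y}(p,q,r)\,\eta_0+\ddx{f_2}{Z}(p,q,r)\,\zeta_0=\gamma_0 f_1+g\sum_j\beta_j u_j
\]
in $\CN[x,y,z]$, where $(p,q,r)=(iX,iY,iZ)$, $g=f_2(p,q,r)/f_1$, and $\xi_0,\eta_0,\zeta_0$ are the classical parts of the $\eps$-components of $\iota_2-\iota_1$. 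Reducing modulo $f_1$, the left-hand side is the value on the relation $f_2=0$ of the negative-degree map $X\mapsto\ovl{\xi_0},\,Y\mapsto\ovl{\eta_0},\,Z\mapsto\ovl{\zeta_0}$ into $\CN[u,v]^{G_1}$. The intended key point is that this map is forced to be an honest derivation with vanishing error term, after which Corollary~\ref{NoDerivationOfHegativeDegCor} makes it zero, so the left-hand side lies in $(f_1)$; then $g\sum_j\beta_ju_j\in(f_1)$, and since $\CN[u,v]^{G_1}=\CN[x,y,z]/(f_1)$ is a domain with $g\ne0$, quasi-homogeneity (Euler's identity gives $f_1\in(\ddx{f_1}{x},\ddx{f_1}{y},\ddx{f_1}{z})$) together with the basis property of the $u_j$ force all $\beta_j=0$, a contradiction.

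The main obstacle is precisely this last step: in the displayed $\eps$-linear identity the infinitesimal change of the embedding (a negative-degree derivation $\CN[u,v]^{G_2}\to\CN[u,v]^{G_1}$) is coupled to the infinitesimal change of the algebra structure of the target (the class $\sum_j\beta_ju_j$ in $\CN[x,y,z]/(\ddx{f_1}{x},\ddx{f_1}{y},\ddx{f_1}{z})$), and these two contributions must be disentangled. The mechanism I expect to use is the rigidity of Corollary~\ref{NoDerivationOfHegativeDegCor} together with Statement~\ref{NonUniqenessArrowDeformationStat}: the total absence of negative-degree derivations pins down the embedding part and isolates the $T^1$-part, which the basis property of the $u_j$ then kills. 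A subsidiary point to verify carefully is that the multiplier $g$ does not vanish, which can be read off the explicit normal forms listed before Statement~\ref{aMStat}.
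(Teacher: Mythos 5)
Your two reductions and the infinitesimal setup are correct, and they coincide with the paper's own proof: reducing the theorem to an isomorphism $A_1^1\cong A_1^2$ of deformations of $\CN[u,v]^{G_1}$ (your use of Corollary~\ref{UniqueArrowDeformationCor} is exactly the paper's remark after the theorem), then to equality of parameters via Lemma~\ref{DeterminedByBLem}, the truncation via Lemma~\ref{ElementsToEpsLemma}, the compatible polynomial lifts, and your displayed $\eps$-linear identity, which is the paper's chain of equations \eqref{UniquenessOfBigger1}--\eqref{UniquenessOfBigger3}. The problem is the final step, which you yourself flag as ``the main obstacle'': your assertion that the map $X\mapsto\ovl{\xi_0}$, $Y\mapsto\ovl{\eta_0}$, $Z\mapsto\ovl{\zeta_0}$ ``is forced to be an honest derivation with vanishing error term'' is given no justification, and it is precisely the statement whose proof constitutes the bulk of the paper. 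Reducing your identity modulo $f_1$ gives, in $\CN[u,v]^{G_1}$,
\[
\ddx{f_2}{X}(p,q,r)\,\ovl{\xi_0}+\ddx{f_2}{Y}(p,q,r)\,\ovl{\eta_0}+\ddx{f_2}{Z}(p,q,r)\,\ovl{\zeta_0}=\ovl{g}\,\sum_j\beta_j\,\ovl{u_j},
\]
and the assignment extends to a derivation of $\CN[u,v]^{G_2}=\CN[X,Y,Z]/(f_2)$ exactly when the right-hand side vanishes. Note that if it does vanish, your contradiction follows at once (domain, $\ovl g\neq 0$, Euler's identity, basis property of the $u_j$), and Corollary~\ref{NoDerivationOfHegativeDegCor} is never needed; so the entire content of the theorem is to exclude the other case, i.e.\ to show that $gR$, with $R=\sum_j\beta_ju_j$ nonzero in the Milnor algebra of $f_1$, cannot lie in the ideal $(f_1,\psi(\ddx{f_2}{x_2}),\psi(\ddx{f_2}{y_2}),\psi(\ddx{f_2}{z_2}))$.

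The mechanism you propose cannot supply this. Corollary~\ref{NoDerivationOfHegativeDegCor} controls the \emph{kernel} of the map $(S_x,S_y,S_z)\mapsto\sum_i\ovl{S_i}\,\psi(\ddx{f_2}{x_i})\bmod f_1$: it says negative-degree syzygies among the $\psi(\ddx{f_2}{x_i})$ modulo $f_1$ are trivial. What is needed is a statement about the \emph{image} of this map, namely that the particular element $\ovl g\,\ovl R$ avoids it; these are different questions, and the first does not imply the second. Statement~\ref{NonUniqenessArrowDeformationStat} does not help either: it applies to two deformations of an arrow with the \emph{same} target, whereas here the targets differ by $\eps R$ -- which is exactly why the derivation part and the $T^1$-part appear coupled in one equation and cannot be separated by rigidity of derivations alone. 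The missing statement is the paper's Statement~\ref{StatForMainLemma}, to which all of Section~\ref{InjSec} is devoted: it is reformulated as injectivity of the multiplication map $m_{21}$ (multiplication by the class of your $g$, the paper's $q_{21}$) between Milnor-type algebras, and proved via the socle element of Statement~\ref{aMStat}, explicit computations for the inclusions $C_2\subset G$ (Lemma~\ref{C2GLem}) and $C_k\subset C_l$ (Lemma~\ref{CkClLem}), and the composition argument of Statement~\ref{G1G2G3Stat}. Even your ``subsidiary point'' $\ovl g\neq 0$ is not read off the normal forms: it depends on the lift $\psi$ and is obtained in the paper only as a byproduct of the same Section~\ref{InjSec} analysis ($m_{21}(a_{M1})=\alpha a_{M2}$ with $\alpha\neq 0$). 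So the proposal reproduces the paper's reductions but is missing the central idea of the proof.
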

\begin{rem*}
We see that $g$ is $R$-linear, so it is a deformation of $\id_{\CN[u,v]^{G_1}}$. Using Corollary~\ref{UniqueArrowDeformationCor} we see that $g$ is unique.
\end{rem*}

It is enough to prove that $\mc{A}_1$ is isomorphic to $\mc{A}_2$ as a deformation of $\CN[u,v]^{G_1}$. The equality $F_2=gF_1$ will follow from Corollary~\ref{UniqueArrowDeformationCor}. We assume that this is not the case: $\mc{A}_1$ is not isomorphic to $\mc{A}_2$.

The proof will be in two steps. In this section we prove that Proposition~\ref{PropForMainLemma} implies Theorem~\ref{MainTheorem}. In the next section we prove Proposition~\ref{PropForMainLemma}.

Let $f\colon \mc{B}\to \mc{A}$ be a deformation of $i\colon \CN[u,v]^{G_2}\subset \CN[u,v]^{G_1}$ over $R$. Consider the canonical surjections $\pi\colon R[x_1,y_1,z_1]\to \mc{A}\cong R[x_1,y_1,z_1]/(P)$, $\tau\colon R[x_2,y_2,z_2]\to \mc{B}\cong B[x_2,y_2,z_2]/(T)$. Denote by $\phi$ any homomorphism of $R$-algebras from $R[x_2,y_2,z_2]$ to $R[x_1,y_1,z_1]$ such that $\pi_1\phi=f\pi_2$. Applying both sides to $T$ we see that there exists $Q\in R[x_1,y_1,z_1]$ such that $\phi(T)=PQ$. Since $P$ is not a zero divisor in $R[x_1,y_1,z_1]$,  $Q$ is unique.

Let $\mc{A}_1=R[x_1,x_1,z_1]/(P_1)$, $\mc{A}_2=R[x_1,y_1,z_1]/(P_2)$, $\mc{B}=R[x_2,y_2,z_2]/(T)$.
\begin{lem}
There exist homogeneous ideals $I,J$ of $R$ such that
\begin{enumerate}
\item
$I\subset J$
\item
$\mc{A}_1/I\mc{A}_1$ is not isomorphic to $\mc{A}_2/I\mc{A}_2$.
\item
$\mc{A}_1/J\mc{A}_1$ is isomorphic to $\mc{A}_2/J\mc{A}_2$.
\item
The kernel of projection $R/I\twoheadrightarrow R/J$ is one-dimensional.
\end{enumerate}
\end{lem}
\begin{proof}
Using Lemma~\ref{DeterminedByBLem} we can reformulate the second and the third claim as follows:
\begin{enumerate}
\item
$P_1+I[x,y,z]\neq P_2+I[x,y,z]$
\item
$P_1+J[x,y,z]=P_2+J[x,y,z]$
\end{enumerate}
We get the result from Lemma~\ref{ElementsToEpsLemma}.
\end{proof}

Replace $R$ with $R/I$. Now we can assume that there exists an element $\eps\in R$ such that $\eps\mm=0$ and $\mc{A}_1/\eps\mc{A}_1\cong \mc{A}_2/\eps\mc{A}_2$. Let $S=R/(\eps)$.

Using Corollary~\ref{UniqueArrowDeformationCor} we see that morphisms $F_1\otimes_R S$ and $F_2\otimes_R S$ coincide after we identify $\mc{A}_1/\eps\mc{A}_1$ with $\mc{A}_2/\eps\mc{A}_2$. 

Denote projections from $R[x_1,y_1,z_1]$ to $\mc{A}_1,\mc{A}_2$ by $\pi_1,\pi_2$. We have $\pi_1\otimes_R S=\pi_2\otimes_R S$. Denote the projection from $R[x_2,y_2,z_2]$ to $\mc{B}$ by $\tau$. Lift $F_1\otimes_R S=F_2\otimes_R S$ to $\phi\colon S[x_2,y_2,z_2]\to S[x_1,y_1,z_1]$. In other words, \[ (F_i\otimes_R S)\circ (\tau\otimes_R S)=(\pi_i\otimes_R S)\circ \phi\] for $i=1,2$. Now for $i=1,2$ we can find $\phi_i$ such that $F_i\circ\tau=\pi_i\circ\phi_i$ and $\phi_i\otimes_R S=\phi$. 

Let $Q_i=\frac{\phi_i(T)}{P_i}$. We note that the images of $Q_1$ and $Q_2$ in $S[x_1,y_1,z_1]$ coincide. We write $Q_2=Q_1+\eps\Delta_Q$, $P_2=P_1+\eps\Delta_P$, $\phi_2(x_2)=\phi_1(x_2)+\eps\delta_x$, similarly for $y_2,z_2$. Using Lemma~\ref{LemEpsAIsWellDefined} we may assume that $\Delta_Q,\Delta_P,\delta_x,\delta_y,\delta_z\in\CN[x_1,y_1,z_1]$.

We have \[Q_2P_2-\phi_2(T)=Q_1P_1+\eps (\Delta_Q p+\Delta_P q)-\phi_1(T)-\eps(\delta_x t'_x+\delta_y t'_y+\delta_z t'_z),\] where $p,q$ are the images of $P_i,Q_i$ in $\CN[x_1,y_1,z_1]$ and $t$ is the image of $T$ in $\CN[x_2,y_2,z_2]$. Since $Q_iP_i-\phi_i(T)=0$ we get \[\Delta_Q p+\Delta_P q=\delta_x t'_x+\delta_y t'_y+\delta_z t'_z.\]

Since $P_1$ is not equal to $P_2$, $\Delta_P$ is not equal to zero. We also have $P_i=p+\sum_{j=1}^m r^i_j u_j$, where $\CN[u,v]^{G_1}=\CN[x_1,y_1,z_1]/(p)$, $u_1,\ldots,u_m$ is a basis of $\CN[x_1,y_1,z_1]/(p'_x,p'_y,p'_z)$ and $r^i_j\in R$. Therefore the image of $\Delta_P$ in $\CN[x_1,y_1,z_1]/(p'_x,p'_y,p'_z)$ is nonzero. In order to get a contradiction we will prove that $\Delta_P q$ does not belong to the ideal $(p,t'_x,t'_y,t'_z)$. It is enough to prove that the image of $\Delta_P q$ in $\CN[u,v]^{G_1}$ does not belong to the ideal $(t'_x,t'_y,t'_z)$:

\begin{prop}
\label{PropForMainLemma} 
Suppose that $G_1\subset G_2$ are finite subgroups of $\SL(2,\CN)$, $\pi_i\colon \CN[x_i,y_i,z_i]\to \CN[u,v]^{G_i}$ are canonical projections and that the kernel of $\pi_i$ is generated by $f_i$. Denote \[I_i=(\ddx{f_i}{x_i},\ddx{f_i}{y_i},\ddx{f_i}{z_i})=\{\CN[u,v]^{G_i},\CN[u,v]^{G_i}\},\] a Poisson commutator ideal of $\CN[u,v]^{G_i}$.  Choose a lift \[\psi\colon \CN[x_2,y_2,z_2]\to \CN[x_1,y_1,z_1]\] of inclusion $\CN[u,v]^{G_2}\subset \CN[u,v]^{G_1}$. Define $q=\pi_1(\frac{\psi(f_2)}{f_1})\in \CN[u,v]^{G_1}$. Denote by $\phi_{G_2,G_1}$ the map from $\CN[u,v]^{G_1}/I_1$ to $\CN[u,v]^{G_1}/\CN[u,v]^{G_1}I_2$ given by multiplication by $q$. Then $\phi_{G_1,G_2}$ is well-defined, does not depend on the choice of $\psi$ and is injective.
\end{prop}

Injectivity of $\phi_{G_1,G_2}$ indeed gives the desired contradiction. The fact that $\phi_{G_1,G_2}$ is well-defined and does not depend on the choice of $\psi$ is a direct computation. We will prove injectivity of $\phi_{G_1,G_2}$ in the next section.
%\begin{lem}
%\label{qCorrectnessLem}
%Let $\psi\colon \CN[x_2,y_2,z_2]\to \CN[x_1,y_1,z_1]$ be a homomorphism of graded algebras such that $p_1\psi=ip_2$. Then the set $\frac{\psi(f_2)}{f_1}+(f_1,\psi(\ddx{f_2}{x_2}),\psi(\ddx{f_2}{y_2}),\psi(\ddx{f_2}{z_2}))$ does not depend on $\psi$.
%\end{lem}
%\begin{proof}
%Suppose that $\psi'$ is another such homomorphism. Denote $\psi'(x_2)-\psi(x_2)$ by $T_x$, the same for $y$ and $z$. Then $\psi'(f_2)=\psi(f_2)+f_1(T_x\psi(\ddx{f_2}{x_2})+T_y\psi(\ddx{f_2}{y_2})+T_z(\psi(\ddx{f_2}{z_2}))+f_1^2(\cdots)$. Hence $\frac{\psi'(f_2)-\psi(f_2)}{f_1}$ belongs to $(f_1,\psi(\ddx{f_2}{x_2}),\psi(\ddx{f_2}{y_2}),\psi(\ddx{f_2}{z_2}))$.
%It is obvious that the ideal $I=(f_1,\psi(\ddx{f_2}{x_2}),\psi(\ddx{f_2}{y_2}),\psi(\ddx{f_2}{z_2}))$ does not depend on the choice of $\psi$.

%Let $\psi_1$ be another such homomorphism. Let us prove that \begin{multline*}
%f_2(\psi(x_1),\psi(y_1),\psi(z_1))+I=f_2(\psi_1(x_1),\psi(y_1),\psi(z_1))+I=\\
%f_2(\psi_1(x_1),\psi_1(y_1),\psi(z_1))+I=f_2(\psi_1(x_1),\psi_1(y_1),\psi_1(z_1))+I
%\end{multline*}
%We will prove only first equality, the other two are proved in the same way. So we can assume that $\psi_1(y_2)=\psi(y_2)$, $\psi_1(z_2)=\psi(z_2)$. Let $p$ be an element of $\CN[x_1,y_1,z_1]$ such that $\psi_1(x_2)=\psi(x_2)+pf_1$. We see that $\psi_1(f_2)=\psi(f_2)+pf_1\psi(\ddx{f_2}{x_2})+f_1^2(\cdots)$. Therefore $\frac{\psi_1(f_2)}{f_1}=\frac{\psi(f_2)}{f_1}+p\psi(\ddx{f_2}{x_2})+f_1(\cdots)$. The lemma follows.
%\end{proof}
\section{Injectivity of multiplication by $q$}
\label{InjSec}
I am grateful to Pavel Etingof for his help with rewriting this section.

Rename our subgroups: $H\subset G$ are finite subgroups of $\SL(2,\CN)$. Denote $\CN[u,v]$ by $A$. We will use Statement~\ref{StatSocle} in our proof: the socle of $A^H/\{A^H,A^H\}$ is one-dimensional and generated by an element of degree $d_H=\deg f_H-4$, where $f_H$ is a generator of the kernel of projection $\CN[x,y,z]\to A^H$. By definition $\deg q=\deg f_G-\deg f_H$, so $\deg q=d_G-d_H$.

We also note that $A^H/A^H\{A^G,A^G\}=A^H\otimes_{A^G} A^G/\{A^G,A^G\}$ and the map from $A^G/\{A^G,A^G\}$ to $A^H/A^H\{A^G,A^G\}$ is an embedding.

\begin{lem}
\label{LemCompositionPhi}
If $H\subset K\subset G\subset \SL(2,\CN)$ are finite subgroups then $\phi_{G,H}=(A^H\otimes_{A^K} \phi_{G,K})\circ \phi_{K,H}$.
\end{lem}
\begin{proof}
In the definition of $q_{G,H}$ take $\psi_{G,H}$ equal to $\psi_{G,K}\circ \psi_{K,H}$ and get $q_{G,H}=q_{G,K}q_{K,H}$. The lemma follows.
\end{proof}

We say that $(H,G)$ is {\it good} if $\phi_{G,H}$ defines an isomorphism between the socle of $A^H/\{A^H,A^H\}$ and the socle of $A^G/\{A^G,A^G\}\subset A^H/A^H \{A^G,A^G\}$. In order to prove Proposition~\ref{PropForMainLemma} it is enough to prove that all pairs $(H,G)$ are good.

\begin{prop}
Let $H\subset K\subset G$ be finite subgroups of $\SL(2,\CN)$. Suppose that $(H,K)$ is good. Then $(H,G)$ is good if and only if $(K,G)$ is good.
\end{prop}
\begin{proof}
We will use Lemma~\ref{LemCompositionPhi}. If $(K,G)$ is good then $(H,G)$ is good. Suppose that $(H,G)$ is good. Denote by $S_H$ the socle of $A^H/\{A^H,A^H\}$, similarly for $S_G,S_K$. We have $\phi_{K,H}(S_H)=S_K$, $\phi_{G,H}(S_H)=S_G$. Therefore $A^H\otimes_{A^K}\phi_{G,K}(1\otimes S_K)=S_G$, so $\phi_{G,K}(S_K)=S_G$ as desired.
\end{proof}

Let $C_2$ be a subgroup of $\SL(2,\CN)$ generated by a matrix $-1$.
\begin{prop}
\label{PropGoodForEven}
$(C_2,G)$ is good for any $G\supset C_2$.
\end{prop}
\begin{proof}
Let $A^G=\CN[X,Y,Z]/(F)$, $A^{C^2}=\CN[x,y,z]/(x^2-yz)$, $\pi$ be the projection from $\CN[x,y,z]$ to $A^{C_2}$. We choose a lift $\psi$ of embedding $A^G\subset A^{C_2}$ so that $\psi(X)=x P_X(y,z)+ Q_X(y,z)$, where $P_X,Q_X$ are polynomials. We have a similar equation for $\psi(Y),\psi(Z)$. 

Let $q_1=\frac{\psi(F)}{x^2-yz}$, $q=\pi(q_1)$. We see that the degree of $q$ equal to the degree of $F$ minus $4$. 

Since $\psi(F)=q_1(x^2-yz)$ we get $\psi(F)'_x=2xq_1+(q_1)'_x(x^2-yz)$, so $\pi(\psi(F)'_x)=2xq$.

Suppose that \[q=aF'_X+bF'_Y+cF'_Z.\] Multiplying by $2x$ we get \[F'_x=2axF'_X+2bxF'_Y+2cxF'_Z.\] Therefore $rF'_X+sF'_Y+tF'_Z=0$, where $r=X'_x-2ax$, similarly for $s,t$. We see that $\deg r=\deg X-2$, $\deg s=\deg Y-2$, $\deg t=\deg Z-2$.

From $rF'_X+sF'_Y+tF'_Z=0$ we get a derivation $D$ from $A^G$ to $A^{C_2}$ of negative degree given by $D(X)=r$, $D(Y)=s$, $D(Z)=t$. Corollary~\ref{NoDerivationOfHegativeDegCor} says that there are no derivations from $A^G$ to $A^{C_2}$ of negative degree. Therefore $r=s=t=0$.

We have $X'_x=P_X(y,z)$, so from $r=0$ we get $P_X(y,z)=2ax$. Therefore $P_X(y,z)$ is divisible by $yz$. Hence $\pi(\psi(X))$ belongs to the set $\CN+(u^2,v^2)\subset A$. We similarly deduce that $\pi(\psi(Y)),\pi(\psi(Z))$ belong to the same set. We deduce that $A^G\subset \CN+(u^2,v^2)\subset A$. 

Define a derivation $D$ from $A$ to $\CN(u,v)$ by $D(u)=\frac{1}{u}$, $D(v)=\frac{1}{v}$. We see that $D(\CN+(u^2,v^2))\subset A$, so the restriction of $D$ to $A^G$ is a derivation from $A^G$ to $A$. Theorem~\ref{MainDerThr} says that $D$ can be lifted to a unique derivation $D_1$ of $A$. Using the uniqueness part of the theorem for the derivation $uvD|_{A^G}$ we get that $uvD=uvD_1$, hence $D=D_1$, a contradiction: $D$ is not a derivation of $A$.
\end{proof}
\begin{prop}
\label{PropGoodForCyclic}
$(C_k,C_l)$ is good for any $k\mid l$.
\end{prop}
\begin{proof}
Let $l=km$.

We have $C_k=\CN[x,y,z]/(x^k-yz)$, $C_l=\CN[x,y,z]/(x^l-yz)$. Choose the following lift of $A^{C^l}\subset A^{C_k}$: \[\psi(x)=x,\qquad \psi(y)=y^m,\qquad \psi(z)=z^m.\] Since $\psi(x^l-yz)=x^l-y^mz^m$ we get $q=mx^{(m-1)k}$. The socle of $A^{C_k}/\{A^{C_k},A^{C_k}\}$ is generated by $x^{k-2}$. We have $qx^{k-2}=mx^{l-2}$. The proposition follows.
\end{proof}
\begin{prop}
All pairs $(H,G)$ are good.
\end{prop}
\begin{proof}
If both $G$ and $H$ have even order they contain $C_2$. In this case proposition follows from Lemma~\ref{LemCompositionPhi} and Proposition~\ref{PropGoodForEven}.

If both $G$ and $H$ have odd order then they are both cycic and $(H,G)$ is good by Proposition~\ref{PropGoodForCyclic}.

If $H=C_l$ has odd order and $G$ has even order then we have $H\subset K\subset G$, where $K$ is generated by $H$ and $C_2$ and is isomorphic to $C_{2l}$. The pair $(H,K)$ is good by~\ref{PropGoodForCyclic}. Both $K$ and $G$ have even order. We already proved that in this case $(K,G)$ is good. Hence $(H,G)$ is good by Lemma~\ref{LemCompositionPhi}.
\end{proof}
\section{Description of a universal commutative deformation.}
\label{NormalCaseSec}
Suppose that $G_1\triangleleft G_2$ are finite subgroups of $\SL(2,\CN)$. We are going to find a universal commutative deformation of $i\colon\CN[u,v]^{G_2}\subset\CN[u,v]^{G_1}$. This will be done in two steps:
\begin{enumerate}
\item
There exists a natural one-to-one correspondence between deformations of $i$ and deformations of $\CN[u,v]^{G_1}$ that admit an action of $G_2/G_1$ with certain properties.
\item
There exists a universal object among deformations of $\CN[u,v]^{G_1}$ that admit an action of $G_2/G_1$.
\end{enumerate}

Suppose that $A$ is a graded algebra, $G$ is a group of automorphisms of $A$. Then $G$ acts on isomorphsm classes of deformations of $A$: if $g$ is an element of $G$, $(\mc{A},\chi\colon \mc{A}/\mc{A}\mm\cong A)$ is a deformation of $A$ over $B$, we define $^g\mc{A}$ as $(\mc{A},g\circ\chi)$.

Suppose that $i\colon A_2\to A_1$ is an inclusion of graded algebras, $G$ is a group of automorphisms of $A_1$ that preserve $A_2$ element-wise. If $F\colon \mc{A}_2\to\mc{A}_1$ is a deformation of $i$, then the same map between $\mc{A}_1$ and $^g\mc{A}_2$ will be deformation of $i$. Therefore we have an action of $G$ on isomorphsm classes of deformations of $i$.

Denote $\CN[u,v]$ by $A$. Denote by $i$ the inclusion $A^{G_2}\subset A^{G_1}$.

Suppose that $F\colon\mc{A}_2\to\mc{A}_1$ is a deformation of $i$, $g$ is an element of $G=G_2/G_1$. Then $F\colon\mc{A}_2\to^g\mc{A}_1$ is a deformation of $i$. Applying Theorem~\ref{MainTheorem} to these two deformations we get the following proposition:
\begin{prop}
Suppose that $G_1$ is a normal subgroup of $G_2$ and $F\colon \mc{A}_2\to\mc{A}_1$ is a deformation of $i\colon A^{G_2}\to A^{G_1}$ over $R$. Then for every $g\in G$ there exists a unique $R$-linear isomorphism of deformations $\tau_g\colon ^g\mc{A}_1\to\mc{A}_1$ such that $\tau_g F=F$.
\end{prop}
\begin{cor}
\label{DeformationToActionCor}
There exists an $R$-linear action of $G$ on $\mc{A}_1$ such that
\begin{enumerate}
\item
$G$ acts on the image of $\mc{A}_2$ trivially.
\item
The isomorphism $\chi\colon\mc{A}_1/\mc{A}_1\mm\to \CN[u,v]^{G_1}$ intertwines the action of $G$.
\end{enumerate}
\end{cor}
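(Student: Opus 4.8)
The plan is to build the action directly from the isomorphisms $\tau_g$ furnished by the statement just established. For each $g\in G_2/G_1$ I would regard $\tau_g\colon\mc{A}_1\to{}^g\mc{A}_1$ as a $B$-linear automorphism of the underlying algebra $\mc{A}_1$, carrying two pieces of data. First, being an isomorphism of deformations of $\CN[u,v]^{G_1}$ from $(\mc{A}_1,\chi)$ to ${}^g\mc{A}_1=(\mc{A}_1,g\circ\chi)$, the induced map $\bar\tau_g$ on the special fiber $\mc{A}_1/\mc{A}_1\mm$ satisfies $(g\circ\chi)\circ\bar\tau_g=\chi$, so that under the identification $\chi$ it is the automorphism $g^{-1}$ of $\CN[u,v]^{G_1}$. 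Second, $\tau_g\iota=\iota$. I would then define the action by $g\cdot a:=\tau_{g^{-1}}(a)$.

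The main step, and the point where the uniqueness clause of the statement above is essential, is verifying that this is a group action. First, $\id_{\mc{A}_1}$ is an isomorphism of deformations ${}^e\mc{A}_1\to\mc{A}_1$ fixing $\iota$, so uniqueness gives $\tau_e=\id$. Next, for $g,h$ I would show $\tau_g\tau_h=\tau_{hg}$: the composite is a $B$-linear automorphism of $\mc{A}_1$ whose induced map on the special fiber is $\bar\tau_g\bar\tau_h$, which under $\chi$ equals $g^{-1}h^{-1}=(hg)^{-1}$, so $\tau_g\tau_h$ is an isomorphism of deformations $(\mc{A}_1,\chi)\to{}^{hg}\mc{A}_1$; moreover $(\tau_g\tau_h)\iota=\tau_g(\tau_h\iota)=\tau_g\iota=\iota$. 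Hence $\tau_g\tau_h$ satisfies the two properties that uniquely characterize $\tau_{hg}$, so $\tau_g\tau_h=\tau_{hg}$. Thus $g\mapsto\tau_g$ is an anti-homomorphism, and consequently $g\mapsto\tau_{g^{-1}}$ (which also yields $\tau_g^{-1}=\tau_{g^{-1}}$) is a genuine homomorphism $G_2/G_1\to\mathrm{Aut}_B(\mc{A}_1)$.

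Finally I would read off the two asserted properties. Property~(1) is immediate: $g$ acts on $\mc{A}_2$, identified with its image under $\iota$, by $\tau_{g^{-1}}\iota=\iota$, so the action is trivial there. Property~(2) follows from the special-fiber computation: the action of $g$ on $\mc{A}_1/\mc{A}_1\mm$ is $\bar\tau_{g^{-1}}$, which under $\chi$ is $(g^{-1})^{-1}=g$, so $\chi$ intertwines the constructed action with the given action of $G_2/G_1$ on $\CN[u,v]^{G_1}$. I expect the only genuine subtlety to be bookkeeping: keeping the twist ${}^g(\cdot)$ and the inverse in $g\cdot a=\tau_{g^{-1}}(a)$ consistent so that the composition law comes out as an action rather than an anti-action, and so that property~(2) lands on $g$ rather than $g^{-1}$. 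All the verifications themselves are forced by the uniqueness in the statement above.
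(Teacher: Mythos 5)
Your proof is correct and follows essentially the same route as the paper: both take the unique isomorphisms $\tau_g\colon\mc{A}_1\to{}^g\mc{A}_1$ fixing $\iota$, deduce $\tau_g\tau_h=\tau_{hg}$ from the uniqueness clause (so that $g\mapsto\tau_g$ is an anti-action), pass to $g\mapsto\tau_{g^{-1}}$ to get a genuine action, and read off the intertwining property from the special-fiber compatibility $(g\circ\chi)\circ\bar\tau_g=\chi$. The only cosmetic difference is that the paper verifies the composition law by viewing $\tau_g$ as an isomorphism of deformations ${}^h\mc{A}_1\to{}^{hg}\mc{A}_1$, whereas you verify the same fact by the explicit computation on the special fiber; these are interchangeable.
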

\begin{proof}
Suppose that $g$ is an element of $G$. Then we have an isomorphism of deformations $\tau_g\colon ^g\mc{A}_1\to \mc{A}_1$ such that $\tau_g F=F$. Since graded algebras $^g\mc{A}_1$ and $\mc{A}_1$ are equal as sets, we have an isomorphism of graded algebras $\rho_g\colon\mc{A}_1\to\mc{A}_1$ such that $\rho_g|{\mc{A}_2}=\id$. Suppose that $h$ is an element of $G$. We see that $\tau_g$, considered as a map from $^g{^h}\mc{A}_1$ to $^h\mc{A}_1$ is an isomorphism of deformations. Hence $\tau_g\circ\tau_h\colon ^{gh}\mc{A}_1\to \mc{A}_1$ is an isomorphism of deformations, so $\tau_g\circ\tau_h=\tau_{gh}$. It follows that $\rho$ is an action of $G_2/G_1$ on $\mc{A}_1$.

Denote by $\ovl{\rho}$ the corresponding action of $G_2/G_1$ on $\mc{A}_1/\mm\mc{A}_1$. Denote by $p$ the projection $\mc{A}_1\to\mc{A}_1/\mm\mc{A}_1$.

Since $\tau_g$ is an isomorphism of deformation, $\chi\circ p\circ\tau_g=g\circ \chi\circ p$. On the other hand $\chi\circ p\circ\tau_g=\chi\circ p\circ\rho_g=\chi\circ\ovl{\rho_g}\circ p$. Hence $\chi\ovl{\rho_g}=g \chi$. Hence $\rho$ is an action of $G$ on $\mc{A}_1$ that satisfies both properties.
\end{proof}

Note that we can go in another direction, from the certain action of $G$ to a deformation of $i\colon A^{G_2}\to A^{G_1}$:
\begin{prop}
\label{PropActionToDeformation}
Suppose that $\mc{A}_1$ is a (possibly, noncommutative) deformation of $A^{G_1}$ and there exists an $R$-linear action of $G$ on $\mc{A}_1$ such that the isomorphism $\chi\colon\mc{A}_1/\mc{A}_1\mm\to A^{G_1}$ is an intertwining operator.  Then $\mc{A}_1^{G_2/G_1}$ is a deformation of $A^{G_2}$ over $R$ and the inclusion $F\colon \mc{A}_1^{G_2/G_1}\to \mc{A}_1$ is a deformation of $i$.
\end{prop}

\begin{defn}
If such an action exists we say that $\mc{A}_1$ admits a good action of $G$.
\end{defn}
It follows from the proof of Corollary~\ref{DeformationToActionCor} that a collection of $R$-linear isomorphisms of deformations $\tau_g\colon ^g\mc{A}_1\to \mc{A}_1$ gives a good action of $G$ on $\mc{A}_1$.

Let $F\colon \mc{A}_2\to\mc{A}_1$ be a deformation of $i$. From Corollary~\ref{DeformationToActionCor} we get a good action of $G$ on $\mc{A}_1$. From Proposition~\ref{PropActionToDeformation} we see that $\mc{A}_1^G$ is a deformation of $A^{G_2}$. Since $G$ acts trivially on $F(\mc{A}_2)$, the image of $F$ is contained in $\mc{A}_1^G$. The map $F\colon \mc{A}_2\to\mc{A}_1^G$ is an $R$-linear morphism of deformations of $A^{G_2}$. From Lemma~\ref{DeformationOfIdLem} we get that $F$ is an isomorphism of deformations of $A^{G_2}$.

Hence we proved the following proposition:
\begin{prop}
\label{PropAnyDeformationIsInclusionOfGInvariants}
Let $F\colon \mc{A}_2\to\mc{A}_1$ be a deformation of $i\colon A^{G_2}\to A^{G_1}$. Then $\mc{A}_1$ admits a good action of $G$ and $F$ is isomorphic to $\mc{A}_1^{G_2}\subset \mc{A}_1$.
\end{prop}

If we forget about $\mc{A}_2$ the set of morphisms does not change:
\begin{prop}
\label{ForgettingSecondArrowIsInjectiveProp}
Let $F_1\colon \mc{A}_2\to \mc{A}_1$, $F_2\colon \mc{B}_2\to \mc{B}_1$ be deformations of $i$. Suppose that $\phi$ is a morphism of deformations of $A^{G_1}$ from $\mc{A}_1$ to $\mc{B}_1$. Then there exists a unique morphism $\psi\colon \mc{A}_2\to\mc{B}_2$ of deformations of $A^{G_2}$ such that $(\psi,\phi)$ is a morphism of deformations of $i$.
\end{prop}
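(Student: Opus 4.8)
The plan is to exploit the identification, established just above this proposition, of the small deformation with the invariants of the large one under the canonical $G_2/G_1$-action, and then to show that $f$ is automatically equivariant for these actions, so that it restricts to the invariant subalgebras.

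By Corollary~\ref{DeformationToActionCor} each $\mc{A}_1^j$ (for $j=1,2$) carries an action of $G_2/G_1$, assembled from the unique isomorphisms of deformations $\tau_g^j\colon\mc{A}_1^j\to{}^{g}\mc{A}_1^{j}$ satisfying $\tau_g^j\iota_j=\iota_j$; moreover $\iota_j$ identifies $\mc{A}_2^j$ with the invariant subalgebra $(\mc{A}_1^j)^{G_2/G_1}$, which is exactly the set of elements fixed by every $\tau_g^j$ (the relation $\tau_g^j\iota_j=\iota_j$ says the image of $\iota_j$ is fixed pointwise). Since $\iota_2$ is injective, any $h$ with $\iota_2 h=f\iota_1$ is unique, and such an $h$ exists precisely when $f\iota_1(\mc{A}_2^1)\subseteq\iota_2(\mc{A}_2^2)$, i.e. when $f$ carries $\tau_g^1$-fixed elements to $\tau_g^2$-fixed elements. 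Thus the whole proposition reduces to showing that $f$ intertwines the two actions.

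The heart of the argument is the identity $f\circ\tau_g^1=\tau_g^2\circ f$ for each $g$. First I would note that $f$, being an algebra homomorphism with $\chi^2\circ\bar f=\chi^1$, is also a morphism of deformations ${}^{g}\mc{A}_1^{1}\to{}^{g}\mc{A}_1^{2}$, since applying $g$ to this equality gives $(g\circ\chi^2)\circ\bar f=g\circ\chi^1$. Consequently both $f\circ\tau_g^1$ and $\tau_g^2\circ f$ are morphisms of deformations of $\CN[u,v]^{G_1}$ from $\mc{A}_1^1$ to the twisted deformation ${}^{g}\mc{A}_1^{2}$. Each $\tau_g^j$ restricts to the identity on its base, so both composites restrict to $f|_{B^1}$ on $B^1$; hence by Corollary~\ref{EqualIffRestictionsAreEqualCor} they coincide. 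This is the step I expect to be the main obstacle, in that it is where the rigidity supplied by Theorem~\ref{MainTheorem} (through the uniqueness of the $\tau_g$ and the uniqueness of morphisms with a fixed restriction to the base) does the real work.

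Granting equivariance, for $a\in\mc{A}_2^1$ the element $\iota_1(a)$ is fixed by every $\tau_g^1$, so $\tau_g^2(f\iota_1(a))=f(\tau_g^1\iota_1(a))=f\iota_1(a)$; thus $f\iota_1(a)$ is $\tau_g^2$-fixed and therefore lies in $\iota_2(\mc{A}_2^2)$. I would then set $h:=\iota_2^{-1}\circ f\circ\iota_1\colon\mc{A}_2^1\to\mc{A}_2^2$, a $B$-algebra homomorphism making the defining square $\iota_2 h=f\iota_1$ of a morphism of deformations of $i$ commute. It remains to check that $h$ is itself a morphism of deformations of $\CN[u,v]^{G_2}$: the base condition is inherited from $f$, and the map induced on special fibres is the restriction of $\bar f=\id_{\CN[u,v]^{G_1}}$ to the invariants, hence $\id_{\CN[u,v]^{G_2}}$, which is a routine verification. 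Uniqueness of $h$ was already noted from the injectivity of $\iota_2$.
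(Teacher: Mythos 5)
Your proposal is correct and follows essentially the same route as the paper: identify $\mc{A}_2^j$ with $(\mc{A}_1^j)^{G_2/G_1}$, reduce everything to showing $f$ intertwines the $G_2/G_1$-actions, and prove the intertwining relation $f\tau_g^1=\tau_g^2 f$ by viewing both composites as morphisms of deformations from $\mc{A}_1^1$ to ${}^{g}\mc{A}_1^2$ agreeing on the base and invoking Corollary~\ref{EqualIffRestictionsAreEqualCor}. This is precisely the paper's argument, with the routine verifications spelled out in slightly more detail.
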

\begin{proof}
We can assume that $\mc{A}_2=\mc{A}_1^G$, $\mc{B}_2=\mc{B}_1^G$.

We see that $\psi$, if it exists, must be equal to $\phi|_{\mc{A}_2^1}$, so it is enough to prove that $\phi$ intertwines the action of $G$. This is proved in the next lemma.
\end{proof}
\begin{lem}
\label{LemAnyMorphismIntertwinesGoodAction}
Suppose that $\mc{A},\mc{B}$ are deformations of $A^{G_1}$ over $R,S$ with a good action of $G$. Then any morphism of deformations $\phi\colon\mc{A}\to\mc{B}$ intertwines the action of $G$.
\end{lem}
\begin{proof}
Let $g$ be an element of $G$. Denote by $\tau_g$ the isomorphism between $^g\mc{A}$ and $\mc{A}$, by $\psi_g$ the isomorphism between $^g\mc{B}$ and $\mc{B}$. The map $\phi$ is a morphism of deformations from $^g\mc{A}\to ^g\mc{B}$. 

Since $\psi_g$  is $R$-linear and $\tau_g$ is $S$-linear we get that two maps $\psi_g\phi$ and $\phi\tau_g$ are morphisms of deformations from $^g\mc{A}$ to $\mc{B}$ and their restrictions on $R$ are equal. Using Corollary~\ref{EqualIffRestictionsAreEqualCor} we get that $\psi_g\phi=\phi\tau_g$. The lemma follows.
\end{proof}

Let $\mc{A}_0$ be a universal deformation of $A^{G_1}$ over the base $R_0$. Suppose that $g$ is an element of $G$. Then $^g\mc{A}_0$ is a universal deformation too. Hence there is a unique isomorphism $\tau_g\colon ^g\mc{A}_0\to \mc{A}_0$. Restricting $\tau_g$ to $R_0$ we get an action of $G$ on $R_0$. 

Recall that $\mc{A}_0=\CN[x,y,z,t_1,\ldots,t_m]/(f-\sum t_i u_i)$, where $\CN[u,v]^{G_1}=\CN[x,y,z]/(f)$ and $u_1,\ldots,u_m$ is a basis of $\CN[x,y,z]/(f'_x,f'_y,f'_z)$.

It follows that $R_0=\CN[t_1,\ldots,t_m]$ is a polynomial algebra, so we can write $R_0=\CN[V]$ for some vector space $V$. Hence $G$ acts on $V$. There is a unique decomposition of $V^*$ into subrepresentations \[V^*=(V^*)^G\oplus (V^*)_G\] where $(V^*)^G$ is the subspace of $G$-invariants. Let $I$ be an ideal in $R_0$ generated by $(V^*)_G$. We note that $e(V^*)_G=\{0\}$ where $e=\frac{1}{\lvert G\rvert}\sum_{g\in G}g$ is an idempotent in $\CN[G]$. 

We formulate a lemma for future use.

\begin{lem}
\label{LemTakingG2G1Invariants}
Let $R_0=\CN[V]$ be a base of a universal deformation, $I$ be an ideal generated by $(V^*)_G\subset R_0$. Then $R_0/IR_0\cong \CN[V^G]$.
\end{lem}

Now we are ready to describe a universal commutative deformation of $i\colon A^{G_2}\to A^{G_1}$.

\begin{prop}
\label{UniversalIsIntertwiningProp}
\begin{enumerate}
\item
Suppose that $\mc{A}$ is a deformation of $A^{G_1}$ over $R$ with a good action of $G$. Then the morphism of deformations $\psi$ from $\mc{A}_0$ to $\mc{A}$ is $G$-equivariant. Moreover, $\psi$ factors through $\mc{A}_0/I\mc{A}_0$.
\item
$\mc{B}_0=\mc{A}_0/I\mc{A}_0$ admits a good action of $G$.
\item
$\mc{B}_0^G\subset \mc{B}_0$ is a universal deformation of $i$.
\end{enumerate} 
\end{prop}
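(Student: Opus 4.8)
The plan is to prove the three parts in order, exploiting the universality of $\tilde{A}$ and the equivariance machinery already set up. First I would address part (1). Let $\mc{A}$ be a deformation of $\CN[u,v]^{G_1}$ over $B$ admitting an action of $G_2/G_1$, and let $\psi\colon\tilde{A}\to\mc{A}$ be the unique morphism of deformations guaranteed by universality (Theorem~\ref{ExistenceOfMorphismOfDeformationsThr}, with uniqueness from Corollary~\ref{NoDerivationOfHegativeDegCor} and Theorem~\ref{NonUniqueMorphismThr}). For each $g\in G_2/G_1$, I would compare the two morphisms $\psi\circ\tau_g^{-1}$ and $\tau_g^{\mc{A}}\circ\psi$ as maps $\tilde{A}\to\mc{A}$, where $\tau_g$ is the action on $\tilde{A}$ and $\tau_g^{\mc{A}}$ the action on $\mc{A}$. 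Both are morphisms of deformations (the action $\tau_g$ on $\tilde A$ was itself defined via the isomorphism $\tilde A\cong {}^g\tilde A$, so composing respects the deformation structure), so by uniqueness they coincide; this is exactly $G_2/G_1$-equivariance of $\psi$. For the factorization through $\tilde{A}/I\tilde{A}$, the point is that $\psi|_{\tilde{B}}\colon\tilde{B}\to B$ is $G_2/G_1$-equivariant while $G_2/G_1$ acts trivially on $B$; hence for each generator $z_j$ with $\sum_{g} gz_j=0$, the image $\psi(z_j)$ is both $G_2/G_1$-invariant in $B$ and has vanishing average, forcing $\psi(z_j)=0$. Thus $\psi$ kills the ideal $I$ and descends to $\tilde{A}/I\tilde{A}$.

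Next, part (2). The action of $G_2/G_1$ on $\tilde{A}$ preserves $\tilde{B}$, and by construction of the $z_j$ it permutes the ideal $I$ (each generator is either fixed or sent into the span of the $z_{k+1},\ldots,z_m$ by the averaging condition, so $I$ is a $G_2/G_1$-stable ideal). Therefore the action descends to the quotient $\tilde{A}/I\tilde{A}$, and one checks it still satisfies the two conditions in Statement~\ref{ActionToDeformationStat}: the base ring is now $\tilde{B}/I = \CN[z_1,\ldots,z_k]$, on which the residual action is trivial (these are the invariant generators), and the isomorphism $\chi$ for the quotient is the composite of $\chi$ for $\tilde A$ with the projection, which still intertwines. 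So $\tilde{A}/I\tilde{A}$ admits an action of $G_2/G_1$ in the sense of the definition.

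Finally, part (3). By Statement~\ref{ActionToDeformationStat}, $(\tilde{A}/I\tilde{A})^{G_2/G_1}\subset\tilde{A}/I\tilde{A}$ is a deformation of $i$. To see it is universal, take any deformation $\iota\colon\mc{A}_2\to\mc{A}_1$ of $i$ over $B$. By Corollary~\ref{DeformationToActionCor}, $\mc{A}_1$ admits an action of $G_2/G_1$ with $\mc{A}_2\cong\mc{A}_1^{G_2/G_1}$, so part (1) yields a unique $G_2/G_1$-equivariant morphism $\psi\colon\tilde{A}/I\tilde{A}\to\mc{A}_1$. Restricting to invariants gives $\psi^{G_2/G_1}\colon(\tilde{A}/I\tilde{A})^{G_2/G_1}\to\mc{A}_1^{G_2/G_1}\cong\mc{A}_2$, and the pair $(\psi^{G_2/G_1},\psi)$ is a morphism of deformations of $i$. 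Existence is thus clear; for uniqueness I would invoke Proposition~\ref{ForgettingSecondArrowIsInjectiveProp}, which says a morphism of deformations of $i$ is determined by its second (bigger) component, together with the uniqueness of $\psi$ as a morphism of deformations of $\CN[u,v]^{G_1}$ from part~(1).

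I expect the main obstacle to be the equivariance argument in part~(1): one must be careful that $\psi\circ\tau_g^{-1}$ and $\tau_g^{\mc{A}}\circ\psi$ really are both \emph{morphisms of deformations} in the precise sense required to invoke uniqueness, since the twisted objects ${}^g\tilde{A}$ change the identification $\chi$ and one has to track how the definitions of $\tau_g$ interact with the base-change formalism of Statement~\ref{EveryMorphismIsABaseChangeStat}. Once equivariance is established cleanly, the vanishing $\psi(z_j)=0$ and the descent in parts (2)--(3) are comparatively routine bookkeeping using the chosen generators $z_1,\ldots,z_m$.
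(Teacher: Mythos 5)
Your proof is correct and follows essentially the same route as the paper: equivariance of $\psi$ via uniqueness of morphisms out of the universal deformation $\tilde{A}$, vanishing of $\psi$ on the zero-average generators $z_{k+1},\ldots,z_m$ (so $\psi(I)=0$), and universality of $(\tilde{A}/I\tilde{A})^{G_2/G_1}\subset\tilde{A}/I\tilde{A}$ obtained by combining Corollary~\ref{DeformationToActionCor}, Statement~\ref{ActionToDeformationStat} and Proposition~\ref{ForgettingSecondArrowIsInjectiveProp}. The only minor divergence is in part (2), where you descend the $G_2/G_1$-action on $\tilde{A}$ directly to $\tilde{A}/I\tilde{A}$ using $G_2/G_1$-stability of $I$, while the paper instead applies Corollary~\ref{EqualRestrictionsOnUniversalBaseCor} to the two morphisms $\pi,\,\pi\tau_g\colon\tilde{A}\to\tilde{A}/I\tilde{A}$ to produce the required isomorphisms $\tilde{A}/I\tilde{A}\cong{}^g(\tilde{A}/I\tilde{A})$; both arguments rest on the same stability fact about the span of $z_{k+1},\ldots,z_m$.
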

\begin{proof}
\begin{enumerate}
\item
Suppose that $g$ is an element of $G$. We see that $\psi\colon ^g\mc{A}_0\to ^g\mc{A}$ is a morphism of deformations. The morphism of deformations from $^g\mc{A}_0$ to $\mc{A}$ is unique, so $\psi\tau_g=\tau_g\psi$. It follows that $\psi$ is $G$-equivariant. By definition the action of $G$ fixes $R$. Using that $\psi(V^*)\subset R$ we get \[\psi((V^*)_G)=e\psi((V^*)_G)=\psi(e(V^*)_G)=\{0\}.\] It follows that that $\psi(I)=0$, so $\psi$ factors through $\mc{A}_0/I\mc{A}_0$.
\item
By definition $G(I)=I$, so $G$ acts on $\mc{B}_0$. The isomorphism $\chi\colon \mc{A}_0/(R_0)_{>0}\mc{A}_0\cong A^{G_1}$ intertwines the action of $G$. Since $I$ is contained in $(R_0)_{>0}$ the same holds for $\mc{B}_0$ instead of $\mc{A}_0$. It follows from Lemma~\ref{LemTakingG2G1Invariants} that $G$ acts trivially on $R_0/IR_0=\CN[V^G]$. Hence the action of $G$ on $\mc{B}_0$ is good.
\item
Consider category of deformations of $A^{G_1}$ with a good action of $G$. We see that $\mc{B}_0$ is an initial object in this category. The statement follows from Proposition~\ref{ForgettingSecondArrowIsInjectiveProp}
\end{enumerate}
\end{proof}

%The following lemma easily follows from Proposition~\ref{UniversalIsIntertwiningProp}.
%\begin{lem}
%Suppose that $\mc{A}$ admits an action of $G_2/G_1$, so there exists a morphism of deformations $\phi$ from $\tilde{A}/I\tilde{A}$ to $\mc{A}$. Then $\phi$ intertwines the action of $G_2/G_1$.
%\end{lem}
%\newtheorem*{rem*}{Remark}
\begin{rem}
Using Theorem~\ref{MainTheorem} we can find a universal commutative deformation of $A^{G_2}\subset A^{G_1}$ in the case when there exists a chain of normal inclusions $G_1=H_1\triangleleft H_2\triangleleft\cdots\triangleleft H_k=G_2$. However, this requires some computation, so we omit it now.
\end{rem}
\subsection{Examples.}
We include two examples of universal commutative deformations.

Let us describe universal deformations of inclusions $A^{C_n}\subset A^{C_{nk}}$ and $A^{C_{2n}}\subset A^{\Dic_n}$. Here $C_m$ is generated by $\begin{pmatrix}
e^{\frac{2\pi i}{m}} & 0\\
0 & e^{-\frac{2\pi i}{m}}
\end{pmatrix}$ and $\Dic_n$ is generated by $C_{2n}$ and $\begin{pmatrix}
0 & 1\\
-1 & 0
\end{pmatrix}$. 

The general algorithm of describing universal deformation of $A^{G_2}\subset A^{G_1}$ is as follows:
\begin{enumerate}
\item
Describe the action of $G$ on $A^{G_1}$.
\item
Let $(\mc{A}_0,\chi)$ be a universal deformation of $A^{G_1}$ over $R_0$. We lift the action of $G$ to $\mc{A}_0$ so that $\chi$ becomes an intertwining operator.
\item
We get an action of $G$ on $R_0=\CN[V]$. Let $I$ be the kernel of the map $\CN[V]\to \CN[V^G]$.
%Suppose that $V^*=(V^*)^{G_2/G_1}\oplus U$, where $U$ is a graded subrepresentation of $G_2/G_1$ in $V^*$. Then $I$ is generated by $U$.
\item
Let $\mc{B}_0=\mc{A}_0/I\mc{A}_0$. From the action of $G$ on $\mc{A}_0$ we get a good action of $G$ on $\mc{B}_0$ and $\mc{B}_0^G\subset\mc{B}_0$ is a universal commutative deformation of $A^{G_2}\subset A^{G_1}$.
\end{enumerate}
\paragraph{$C_{nk}\subset C_n$.}
Let $(\mc{A}_0,\chi)$ be a universal deformation of $\CN[u,v]^{C_n}$ over $R_0$. Then $R_0=\CN[a_0,\ldots,a_{n-2}]$, $\mc{A}_0=R_0[x,y,z]/(x^n+\suml_{i=0}^{n-2}a_ix^i-yz)$, $\chi\colon \mc{A}_0/B^{>0}\mc{A}_0\cong \CN[u,v]^{C_n}=\CN[x,y,z]/(x^n-yz)$ sends $x,y,z$ to $x,y,z$.

\begin{lem}
The action of $G=C_k$ on $\mc{A}_0$ is $R_0$-linear. Inclusion $\mc{A}_0^{C_k}\subset\mc{A}_0$ is a universal deformation of $\CN[u,v]^{C_{nk}}\subset\CN[u,v]^{C_n}$.
\end{lem}
\begin{rem}
We can write $\mc{A}_0^{C_k}$ explicitly: $\mc{A}_0^{C_k}\cong \CN[x,y,z,a_0,\ldots,a_{n-2}]/((x^n+\suml_{i=0}^{n-2} a_ix^i)^k-yz)$. Inclusion $\mc{A}_0^{C_k}\subset\mc{A}_0$ is given by $x\mapsto x$, $y\mapsto y^k$, $z\mapsto z^k$.
\end{rem}
\begin{rem}
We see that any deformation of $\CN[u,v]^{C_n}$ appears in some deformation of $\CN[u,v]^{C_{nk}}\subset \CN[u,v]^{C_n}$.
\end{rem}
\begin{proof}
Let $g$ be a generator of $G$ that is equal to an image of $\begin{pmatrix}
\eps & 0\\
0 & \eps^{-1}
\end{pmatrix}\in C_{nk}$ in $G$. Here $\eps=e^{\frac{2\pi i}{nk}}$. The action of $G$ on $A^{C_n}$ is obtained from the action of $C_{nk}$ on $A$, so $gx=g(uv)=uv=x$, $gy=g(u^n)=\eps^n y$, $gz=g(v^n)=\eps^{-n} z$. 

Let $G$ act on $\mc{A}_0$ as follows: $G$ fixes $R_0$, $gx=x$, $gy=\eps^n y$, $gz=\eps^{-n}z$. This is a well-defined action and $\chi$ is an intertwining operator.

Since $G$ acts on $R_0$ trivially we have $I=\{0\}$ and $\mc{B}_0=\mc{A}_0$. Hence $\mc{A}_0^{C_k}\subset\mc{A}_0$ is a universal deformation of $\CN[u,v]^{C_{nk}}\subset\CN[u,v]^{C_n}$.
\end{proof}
\paragraph{$C_{2n}\subset\mathbb{D}_n$.}
The universal deformation $(\mc{A}_0,\chi)$ of $\CN[u,v]^{C_{2n}}$ is given by $\mc{A}_0= \CN[x,y,z,a_0,\ldots,a_{2n-2}]/(x^{2n}+\suml_{i=0}^{2n-2} a_ix^i-yz)$.

\begin{lem}
The nontrivial element of $\mathbb{D}_n/C_{2n}=C_2$ acts on $R_0$ as follows: $a_i\mapsto (-1)^i a_i$. Hence
\begin{enumerate}
\item
$I=(a_1,a_3,\ldots,a_{2n-3})$.
\item
\(R_0/IR_0\cong\CN[a_0,a_2,\ldots,a_{2n-2}]\)
\item
\(\mc{A}_0/I\mc{A}_0\cong\CN[a_0,\ldots,a_{2n-2},x,y,z]/(x^{2n}+\suml_{i=0}^{n-1}a_{2i}x^{2i}-yz)\)
\end{enumerate}

The universal commutative deformation of $\CN[u,v]^{\mathbb{D}_n}\subset\CN[u,v]^{C_{2n}}$ is given by \begin{multline*}\CN[X,Y,Z,a_0,a_2,\ldots,a_{2n-2}]/(XY^2-Z^2-4X^{n+1}-\suml_{i=0}^{n-1}a_{2i}X^{i+1})\to \\
\CN[x,y,z,a_2,\ldots,a_{2n-2}]/(x^{2n}+\suml_{i=0}^{n-1}a_{2i}x^{2i}-yz)
\end{multline*} where $X\mapsto x^2$, $Y\mapsto y+z$, $Z\mapsto x(y-z)$.
\end{lem}
\begin{proof}
The generator $g$ of $G=C_2$ acts on $A^{C_{2n}}=\CN[x,y,z]/(x^{2n}-yz)$ as matrix $\begin{pmatrix}
0 & 1\\
-1 & 0
\end{pmatrix}$, so $gx=g(uv)=-vu=-x$, $gy=g(u^{2n})=z$, $gz=g(v^{2n})=y$.

We can lift this action to $\mc{A}_0$ as follows: $gx=-x$, $gy=z$, $gz=y$, $ga_i=(-1)^i a_i$. It follows that $I=(a_1,\ldots,a_{2n-3})$.

We see that $\mc{B}_0$ is generated by $X=x^2$, $Y=y+z$, $Z=x(y-z)$ over $\CN[a_0,\ldots,a_{2n-2}]$. They satisfy 
\begin{multline*}
XY^2-Z^2-4X^{n+1}=x^2(y+z)^2-x^2(y-z)^2-4x^{2n+2}=\\
4x^2(yz-x^{2n})=4x^2(\suml_{i=0}^{n-1}a_{2i}x^{2i})=\suml_{i=0}^{n-1}a_{2i}X^{i+1}
\end{multline*}
Since $\mc{B}_0^{C_2}$ is a deformation of $A^{\Dic_n}$ this is the only relationship between $X,Y,Z$. Hence $\mc{B}_0^{C_2}$ is isomorphic to $\CN[X,Y,Z,a_0,a_2,\ldots,a_{2n-2}]/(XY^2-Z^2-4X^{n+1}-\suml_{i=0}^{n-1}a_{2i}X^{i+1})$. The lemma follows.
\end{proof}
\begin{rem}
The universal commutative deformation of $\CN[x,y]^{\mathbb{D}_n}$ is given by \[\CN[x,y,z,a_0,\ldots,a_n,b]/(xy^2-z^2-4x^{n+1}-\suml_{i=0}^n a_ix^i-by).\] We see that there exist deformations of $\CN[x,y]^{\mathbb{D}_n}$ and $\CN[x,y]^{C_{2n}}$ that do not appear in deformations of $\CN[x,y]^{\mathbb{D}_n}\subset \CN[x,y]^{C_{2n}}$.
\end{rem}

\section{CBH algebras}
\label{CBHSec}
From now on deformations are not supposed to be commutative.

\subsection{Plan of Sections~\ref{CBHSec}-\ref{NoncommutativeNormalSec}}
Let us write a short plan of Sections~\ref{CBHSec}-\ref{NoncommutativeNormalSec}. First, we define a notion of a Crawley--Boevey---Holland algebra and recall basic properties of CBH algebras. We introduce CBH algebras because they provide a reasonable way to parametrize noncommutative deformations of the algebras $\CN[u,v]^G$. The classification of noncommutative deformations in~\cite{Loseu} is formulated in terms of CBH parameters. 

In the case of normal inclusion $G_1\triangleleft G_2$ there exist certain inclusions of CBH algebras that deform the inclusion $i\colon\CN[u,v]^{G_2}\subset \CN[u,v]^{G_1}$. Commutative deformations of $\CN[u,v]^G$ are parametrized by $V/W$, where $V$ is a space with a root system, $W$ is the corresponding Weyl group. In the end of Section~\ref{CBHSec} we prove a similar result about deformations of $i$: commutative deformations of $i$ are parametrized by $V/W$, where $V$ is a space with a root system, $W$ is a corresponding Weyl group. 

In Section~\ref{NoncommutativeNormalSec} we introduce a noncommutative deformation of $i$ over $\CN[V/W]\otimes \CN[z]$ that is isomorphic to universal commutative deformation of $i$ when we set $z=0$. Then we prove that this is a universal deformation of $i$.
\subsection{Definition and basic properties of CBH algebras}
\begin{defn}
\label{SmashProductDef}
Suppose that $G$ is a finite group acting on an algebra $A$ by automorphisms. Define a bilinear product $\cdot$ on $A\otimes_{\CN} \CN[G]$ in the following way: $(a\otimes g)\cdot (b\otimes h)=ag(b)\otimes gh$. This algebra is called the smash product of $A$ and $G$ and is denoted by $A\#G$.
\end{defn}
We see that $\cdot$ is an associative product.

\begin{defn}
Let $R$ be a graded $\CN$-algebra, $G$ be a finite subgroup of $\SL(2,\CN)$. We have a grading on $R[G]$ such that elements of $G$ are homogeneous of degree $0$. Suppose that $c$ is an element of $Z(R[G])$ of degree $2$, $e$ is the element of $R[G]$ equal to $\frac{1}{|G|}\suml_{g\in G} g$. The algebra $e(R\langle x,y \rangle\#G/(xy-yx-c))e$ is called a CBH algebra with parameter $c$ and is denoted by $\mathcal{O}^R_c$ or simply $\mathcal{O}_c$.
\end{defn}
The algebra $\mathcal{O}_c$ is a graded unital algebra.

The following facts were proved in~\cite{CBH}
\begin{stat*}
\begin{enumerate}
\item
$\mathcal{O}_c$ is a free $R$-module.
\item
Suppose that $c=\sum_{g\in G}c_g g$. Then $\mathcal{O}_c$ is commutative if and only if $c_1=0$.
\end{enumerate}
\end{stat*}
Suppose that $R_0\cong\CN$. It follows that $\mathcal{O}_c$ is a flat deformation of $e(\CN[u,v]\#G)e$.
\begin{rem}
\label{SphericalSubalgebraRem}
The map $a\mapsto ea$ is an isomorphism of unital algebras between $\CN[u,v]^G$ and $e(\CN[u,v]\#G)e$.
\end{rem}

Hence $\mathcal{O}_c$ is a deformation of $\CN[u,v]^G$ over $R$.

%\begin{lem}
%\label{MorphismFromCBHLemma}
%Suppose that $\mc{A}$ is a deformation of $\CN[u,v]^G$ over $S$, $R$ is a graded algebra, $\mc{O}_c$ is a CBH algebra with parameter $c\in Z(R[G])$, $\phi\colon\mc{O}_c\to\mc{A}$ is a morphism of deformations. Then $\mc{A}$ is isomorphic to a CBH algebra with parameter $\phi(c)\in Z(S[G])$.
%\end{lem}
%\begin{proof}
%Recall Statement~\ref{EveryMorphismIsABaseChangeStat}: every morphism of deformations is a base change. Hence $\mc{A}\cong \mc{O}_c\otimes R S$. The lemma follows.
%\end{proof}
%\subsection{Universal deformation of $\CN[u,v]^G$}
Recall that $Z(\CN[G])^*$ has a basis consisting of characters of irreducible $\CN[G]$-modules. Denote them by $\chi_0,\chi_1,\ldots,\chi_n$, where $\chi_0$ is the character of the trivial representation. Denote by $\chi_{\CN^2}$ the character of the tautological representation of $G$ on $\CN^2$. Denote by $(\cdot,\cdot)$ the standard scalar product on $Z(\CN[G])^*$. We have another Hermitian form: $B(\chi_i,\chi_j)=(\chi_i,\chi_{\CN^2}\otimes\chi_j)$.

The following theorem is well-known.
\begin{thr*}[McKay]
$(B(\chi_i,\chi_j))_{i,j=1\ldots m}$ is a Cartan matrix of some simply laced Dynkin diagram. The form $B$ is positive semidefinite, its kernel is generated by the character of regular representation.
\end{thr*}

Hence $\chi_0,\ldots,\chi_m$ form an affine roots system with respect to $B$ and $\chi_1,\ldots,\chi_m$ form a root system with respect to $B$. Denote the corresponding finite Weyl group by $W$, it acts on $Z(\CN[G])^*$. We have a dual affine root system in $Z(\CN[G])$ and a dual action of $W$ on $Z(\CN[G])$. For every commutative graded algebra $R$, $W$ acts on $Z(R[G])$ respecting grading. 

We will need another fact from~\cite{CBH}.
\begin{thr}
$\mc{O}^R_c$ is naturally isomorphic to $\mc{O}^R_{wc}$ for all commutative graded algebras $R$, $c\in Z(R[G])$ of degree $2$, $w\in W$.
\end{thr}
\begin{cor}
\label{WeylActuallyActsCor}
Suppose that $R$ is a graded algebra, $c$ is an element of $Z(R[G])$ of degree $2$, $H$ is a subgroup of $W$ that acts on $R$ via $h\mapsto \phi_h$. If $\phi_h(c)=h^{-1}(c)$ for all $h\in H$ then $\phi$ can be lifted to an action of $H$ on $\mc{O}^R_c$ by automorphisms of deformations.
\end{cor}
\begin{proof}
Let $R_h$ be the following $R$-module: $R$ acts on itself by $r.s=\phi_{h^{-1}}(r)s$. Base change $\mc{O}_c\to \mc{O}_c\otimes_R R_h$ is a morphism of deformations by Statement~\ref{BaseTensoringStat}. We have \[\mc{O}_c\otimes_R R_h=\mc{O}_{\phi_h(c)}\cong \mc{O}_c.\]  This gives an action of $H$ on $\mc{O}_c$  
\end{proof}
%\begin{lem*}
%Suppose that $\mc{A}$ is a deformation of $A$ over $B$, $\phi$ is a graded automorphism of $B$. Then the following are equivalent:
%\begin{enumerate}
%\item
%There exists an automorphism of deformation $\Phi\colon \mc{A}\to\mc{A}$ such that $\Phi|_B=\phi$.
%\item
%Consider the deformation $\mc{A}_{\phi}=\mc{A}$, where the structure of $B$-module on $\mc{A}$ is given by $b.a=\phi(b)a$. Then $\mc{A}$ and $\mc{A}_{\phi}$ are isomorphic as deformations of $A$.
%\end{enumerate}
%\end{lem*}
%\begin{proof}
%The map $\Phi\colon\mc{A}\to\mc{A}$ is an automorphism of deformation if and only if the same map of sets $\Phi\colon\mc{A}\to\mc{A}_{\phi}$ is an isomorphism of deformations.
%\end{proof}

\subsection{Connection between CBH algebras and universal commutative deformation}
Let $n+1$ be the number of conjugacy classes in $G$. Suppose that $C_0=\{1_G\},C_1,\ldots,C_n$ are all conjugacy classes in $G$. Then $1_G$, $g_1=\sum_{g\in C_1}g$, $g_2$, \dots, $g_n=\sum_{g\in C_n}g$ is a basis of $Z(\CN[G])$. Consider the CBH algebra $\tilde{\mathcal{O}}$ with parameter $\sum_{i=1}^n z_ig_i\in Z(\CN[z_1,\ldots,z_n][G])$ , where each $z_i$ has degree $2$. Note that $\tilde{\mc{O}}$ is commutative. Using Corollary~\ref{WeylActuallyActsCor} we see that $W$ acts on $\tilde{\mc{O}}$ by automorphisms of deformation.

Suppose that $\mc{A}_0$ is a universal commutative deformation of $\CN[u,v]^G$. Let $\chi$ be a unique morphism of deformations from $\mc{A}_0$ to $\tilde{\mathcal{O}}$. 
\begin{thr}[Crawley--Boevey---Holland, Kronheimer]
\label{UniversalCBHAlgebraThr}
\begin{enumerate}
$\chi$ is a bijection between $\mc{A}_0$ and $\tilde{\mathcal{O}}^W$.
\end{enumerate}
\end{thr}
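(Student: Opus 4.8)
The plan is to peel off the $W$-action so that the statement becomes an assertion about the two base algebras, and then to invoke Slodowy's computation of the universal deformation. Throughout write $\tilde{\mc{O}}$ for the commutative CBH family over its base $\CN[z_1,\dots,z_m]$, which under the McKay correspondence is $\CN[\mf{h}]$ carrying the Weyl action of $W$, so $\CN[z_1,\dots,z_m]^W=\CN[\mf{h}]^W$. First I would show $\chi(\tA)\subseteq\tilde{\mc{O}}^W$. Since $\tA$ is the universal, i.e.\ initial, commutative deformation, the morphism of deformations $\chi\colon\tA\to\tilde{\mc{O}}$ is unique. For $w\in W$ the automorphism $\tau_w\colon\tilde{\mc{O}}\to\tilde{\mc{O}}$ provided by Corollary~\ref{WeylActuallyActsCor} is an automorphism of deformations, so $\tau_w\circ\chi$ is again a morphism $\tA\to\tilde{\mc{O}}$; by uniqueness $\tau_w\circ\chi=\chi$. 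Hence every element of $\chi(\tA)$ is fixed by all $\tau_w$, so $\chi(\tA)\subseteq\tilde{\mc{O}}^W$, and in particular $j:=\chi|_{\tB}$ lands in $\CN[z_1,\dots,z_m]^W$. This is the equivariance argument used in Proposition~\ref{UniversalIsIntertwiningProp}.

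Next I would identify $\tilde{\mc{O}}^W$ as a base change of $\tA$. By Statement~\ref{EveryMorphismIsABaseChangeStat}, $\chi$ exhibits $\tilde{\mc{O}}$ as $\tA\otimes_{\tB}\CN[z_1,\dots,z_m]$, the tensor taken along $j$; this is legitimate because $j$ lands in the invariants, so $\id_{\tA}\otimes w$ is a well-defined algebra automorphism of this tensor product. As $\id_{\tA}\otimes w$ restricts to $w$ on the base and to the identity on the special fibre $\CN[u,v]^G$, the uniqueness in Corollary~\ref{WeylActuallyActsCor} (via Corollary~\ref{EqualIffRestictionsAreEqualCor}) forces it to equal $\tau_w$. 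Picking a homogeneous $\tB$-basis $\{a_i\}$ of the free module $\tA$ (Lemma~\ref{DeformationOfIdLem}), I get $\tilde{\mc{O}}=\bigoplus_i a_i\otimes\CN[z_1,\dots,z_m]$ with $W$ acting only on the coefficients, so $\tilde{\mc{O}}^W=\bigoplus_i a_i\otimes\CN[z_1,\dots,z_m]^W=\tA\otimes_{\tB}\CN[z_1,\dots,z_m]^W$, and $\chi$ becomes the canonical map $\tA\to\tA\otimes_{\tB}\CN[z_1,\dots,z_m]^W$. Reading off coefficients against $\{a_i\}$, this map is a bijection if and only if $j\colon\tB\to\CN[z_1,\dots,z_m]^W$ is a bijection of graded algebras, so the theorem reduces to showing that $j$ is an isomorphism.

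For the last step I would argue that both sides are graded polynomial algebras in $m$ generators and that $j$ matches their gradings. Here $\tB=\CN[y_1,\dots,y_m]$ by construction, while $\CN[\mf{h}]^W$ is polynomial in $m$ generators by Chevalley's theorem, where $m$ is simultaneously the number of nontrivial conjugacy classes of $G$, the rank of the McKay root system, and the Milnor number of $f$. Since an injective degree-preserving map between graded vector spaces with the same Hilbert series is automatically bijective, it suffices to know that the degrees $\deg f-\deg u_i$ of the generators $y_i$ coincide, as a multiset, with the degrees of the fundamental invariants of $\CN[\mf{h}]^W$ (twice the fundamental degrees of $W$), and that $j$ carries $\tB$ onto those invariants, i.e.\ that $j$ is the classical identification of the semiuniversal base with $\mf{h}/W$.

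The main obstacle is precisely this identification of $j$ with the inclusion $\CN[\mf{h}/W]=\CN[\mf{h}]^W\hookrightarrow\CN[\mf{h}]$. It is not formal: it is the deformation-theoretic content of the Brieskorn--Slodowy--Kronheimer picture, resting on Slodowy's description of the universal deformation of $\CN[u,v]^G$ recalled in Section~\ref{UnivAlgSec}, on the McKay correspondence, and on the resulting coincidence between the semiuniversal degrees $\deg f-\deg u_i$ and twice the fundamental degrees of $W$ (one checks on the $ADE$ list that both equal $\{\deg f-\deg u_i\}$, e.g.\ $\{4\}$ for $A_1$ and $\{4,6\}$ for $A_2$). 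Granting this, either by the per-type computation of the commutative CBH family in~\cite{CBH} or by Slodowy's identification of its base with $\mf{h}/W$ in~\cite{Slodowy}, the map $j$ is a degree-preserving bijection, and the reduction of the first two paragraphs finishes the proof.
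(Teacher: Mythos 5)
Your first two paragraphs are correct and constitute a genuinely different (and clean) reduction from the one in the paper: using initiality of $\tA$ plus Corollary~\ref{WeylActuallyActsCor} to get $\tau_w\circ\chi=\chi$, then Statement~\ref{EveryMorphismIsABaseChangeStat} and a homogeneous $\tB$-basis to identify $\tilde{\mc{O}}^W$ with $\tA\otimes_{\tB}\CN[z_1,\ldots,z_m]^W$, so that the theorem becomes equivalent to the assertion that $j=\chi|_{\tB}\colon\tB\to\CN[z_1,\ldots,z_m]^W$ is an isomorphism. All of that is sound.

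The gap is the final step, and it is exactly where the content of the theorem lives. By your own reduction, ``$j$ is a bijection'' \emph{is} the theorem, so ``granting'' the identification of $j$ with the classical inclusion $\CN[\mf{h}]^W\hookrightarrow\CN[\mf{h}]$ amounts to granting the conclusion. The degree coincidence on the $ADE$ list cannot close this: a graded homomorphism between polynomial rings with matching generator degrees need be neither injective nor surjective (the classifying map of the \emph{constant} family $\CN[u,v]^G\otimes\CN[z_1,\ldots,z_m]$ kills $\mm_{\tB}$ entirely), so one needs injectivity or surjectivity of the specific map $j$, i.e.\ a versality-type statement about the CBH family. Neither of your proposed citations supplies this in the required form: what~\cite{CBH} (via Kronheimer) proves is that $\Specm\tilde{\mc{O}}^W\to\Specm(\CN[z_1,\ldots,z_m]^W)$ is universal in the category of \emph{complex-analytic} deformations, and Slodowy's identification of the semiuniversal base with $\mf{h}/W$ is made through his own Lie-theoretic construction, not through the map $j$ arising from the universal property of $\tA$. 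Bridging precisely this mismatch is what the paper's proof does: it takes the analytic comparison morphism $\phi\colon\Specm\tA\to\Specm\tilde{\mc{O}}^W$ furnished by analytic universality, uses the $\CN^{\times}$-actions coming from the gradings to show that the pullback of any homogeneous function is a polynomial (so $\phi$ is algebraic), and then concludes by playing the two universal properties against each other. Your argument needs this step (or some substitute for it) and does not contain it.
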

\begin{proof}
The fact that $\operatorname{Specm}\tilde{\mc{O}}^W\twoheadrightarrow \Specm(\CN[z_0,\ldots,z_m]^W)$ is a universal deformation of $\Specm\CN[u,v]^G$ in the category of complex analytic varieties was proved in~\cite{CBH} and~\cite{Kronheimer}, see discussion at the end of Section 8 of~\cite{CBH}. It follows that there exists a complex-analytic morphism of deformations $\phi$ from $\Specm\mc{A}_0$ to $\operatorname{Specm}\tilde{\mc{O}}^W$. 

Since $\CN[u,v]^G$ is a graded algebra, we have an action of $\CN^{\times}$ on $\CN[u,v]^G$. So we have an algebraic/complex-analytic action of $\CN^{\times}$ on a universal algebraic/complex-analytic deformation of $\Specm\CN[u,v]^G$. So we have an action of $\CN^{\times}$ on $\Specm\tilde{\mc{O}}^W$ and $\Specm\mc{A}_0$. It is not hard to prove that this action coincides with the action of $\CN^{\times}$ coming from grading on $\tilde{\mc{O}^W}$ and $\mc{A}_0$.

Since $\Specm\tilde{\mc{O}}^W$ is a universal deformation, $\phi$ intertwines the action of $\CN^{\times}$. Suppose that $f$ is a homogeneous element of $\tilde{\mc{O}}^W$ of degree $d$. This means that for any $x\in\Specm\tilde{\mc{O}}^W$, $z\in\CN^{\times}$, $f(zx)=z^d f(x)$. So $h=f\circ\phi$ is a complex-analytic function on $\Specm\mc{A}_0$ such that for any $s\in\Specm\mc{A}_0$, $z\in \CN^{\times}$, $h(zs)=z^d h(s)$.

Recall that $\mc{A}_0=\CN[x,y,z,a_1,\ldots,a_m]/(f(x,y,z)-\sum a_iu_i(x,y,z))$. So we can write $h(s)$ in some neighborhood of zero as convergent series in variables $x,y,z,a_1,\ldots,a_m$. We see that changing $s$ to $zs$ results in multiplicating the coefficient on $x^{\alpha_x}y^{\alpha_y}\ldots a_m^{\alpha_m}$ by $z^{\alpha_x+\alpha_y+\ldots+\alpha_m}$. It easily follows from $h(zs)=z^d h(s)$ that $h$ can be written using monomials with $\alpha_x+\ldots+\alpha_m=d$. In others words $h$ is a polynomial.

We see that $\phi$ is a morphism of algebraic varieties. Denote by $\chi^*$ the morphism of algebraic varieties corresponding to $\chi$. Since $\Specm\mc{A}_0$ and $\Specm\tilde{O}^W$ are universal deformations, both compositions $\phi\chi^*$ and $\chi^*\phi$ are identity. Hence $\chi$ is an isomorphism.
\end{proof}

Let $A=\CN[u,v]$, $G=G_2/G_1$.

Now we consider deformations of inclusion $A^{G_2}\subset A^{G_1}$. We want to prove a theorem similar to Theorem~\ref{UniversalCBHAlgebraThr}. First we will show that CBH algebras can be used to construct a deformation of $A^{G_2}\subset A^{G_1}$. 

\begin{prop}
\label{PropConstructionOfCBHInclusion}
Suppose that $R$ is a graded $\CN$-algebra, $G_1\triangleleft G_2$ are finite subgroups of $\SL(2,\CN)$, $c$ is an element of $Z(R[G_1])\cap Z(R[G_2])$ of degree $2$, $\mc{O}_c^1$ and $\mc{O}_c^2$ are CBH algebras for groups $G_1,G_2$ with parameter $c$. Then there exists an embedding of $\mc{O}_c^2$ into $\mc{O}_c^1$. This embedding is a deformation of $A^{G_2}\subset A^{G_1}$ over $R$.
\end{prop}
\begin{proof}
Define an action of $G_2$ on $R\langle u,v\rangle\#G_1$ as follows: $g(f\otimes h)=gf\otimes ghg^{-1}$. This is an action by $R$-algebra automorphisms. We see that $g(xy-yx-c)=xy-yx-c$ and $ge_{G_1}=e_{G_1}$. So we have an action of $G_2$ on $\mc{O}_c^1=e_{G_1}(B\langle u,v\rangle \# G_1/(uv-vu-c))e_{G_1}$. Algebra $\mc{O}_c^1$ consists of elements $f\otimes e_{G_1}$, where $f\in \CN[u,v]^{G_1}$, so the action of $G_1$ on $\mc{O}_c^1$ is trivial. Hence we have an action of $G$ on $\mc{O}^1_c$. We see that this action is good, so by Proposition~\ref{PropActionToDeformation} $(\mc{O}^1_c)^{G}\subset \mc{O}^1_c$ is a deformation of $A^{G_2}\subset A^{G_1}$.

Using Remark~\ref{SphericalSubalgebraRem} see that $(\mc{O}_c^1)^{G}\cong e_{G}(\mc{O}_c^1\# G)e_{G}$. Now it is easy to construct an isomorphism of deformations between \[e_{G}(\mc{O}_c^1\# G)e_{G}\] and \[\mc{O}_c^2=e_{G_2}(\CN\langle u,v\rangle\# G_2)e_{G_2}.\]
\end{proof}

%We want to prove a theorem similar to~\ref{UniversalCBHAlgebraThr}: there exists an algebra $R$, $c\in Z(R[G_1])\cap Z(R[G_2])$ and a group $H$ that acts on $\mc{O}_c^1$ such that $(\mc{O}_c^2)^H\subset (\mc{O}_c^1)^H$ is a universal commutative deformation of $A^{G_2}\subset A^{G_1}$.

%In order to do this we will construct a good action of $G$ on $\tilde{\mc{O}}$, take $H$ to be the set of elements in $W$ that commute with $G$, take $R=\CN[

Recall that $g_0=1_{G_1}$, $g_i=\sum_{g\in C_i} g$, where $C_i$ are all conjugacy classes in $G$.
% We use $B$ to construct an isomorphism $\phi$ between $Z(\CN[G])^*$ and $Z(\CN[G])$. 
\begin{lem}
\label{G2G1ActsByDiagramAutomLem}
\begin{enumerate}
\item
There exists a root system in $\Span(g_1,\ldots,g_m)$ such that the action of $G$ on $Z(\CN[G_1])$ by conjugation permutes simple roots and preserves scalar product.
\item
This action lifts to an action of $G$ on $\tilde{\mc{O}}$ such that the natural map $\tilde{\mc{O}}\to \CN[u,v]^{G_1}$ is an intertwining operator.
\end{enumerate}
\end{lem}
\begin{proof}
We have $\Span(g_1,\ldots,g_m)^{\perp}=\CN\chi_{reg}$, where $\chi_{reg}$ is the character of regular representation of $G$. Since $\chi_{reg}=\sum_{i=0}^m a_i\chi_i$ where all $a_i>0$ we deduce that the pairing between $\Span(g_1,\ldots,g_m)$ and $\Span(\chi_1,\ldots,\chi_m)$ is nondegenerate. 

Hence from the root system given by simple roots $\{\chi_1,\ldots,\chi_m\}$ and the action of $W$ we get a dual root system in $\Span(g_1,\ldots,g_m)$ and an action of $W$.

It is enough to prove that a dual action of $G$ on $Z(\CN[G_1])^*$ permutes simple roots and preserves scalar product. For a representation $\rho$ we have $g\chi_{\rho}=\chi_{\rho\circ g^{-1}}$, hence the action of $G$ permutes simple roots. The tautological action of $G_1$ on $\CN^2$ can be extended to an action of $G_2$, hence $g\chi_{\CN^2}=\chi_{\CN^2}$. Since the action of $G$ preserves the standard product $(\cdot,\cdot)$ it follows that the action of $G$ preserves scalar product $B(\chi_i,\chi_j)=(\chi_i\otimes \CN^2, \chi_j)$.

Consider the following action of $G_2$ on $\CN[z_1,\ldots,z_m]\langle u,v\rangle\#G_1$: $g.h=ghg^{-1}$ for $h\in G_1$. If $C_i$, $C_j$ are conjugacy classes in $G_1$ such that $gC_ig^{-1}=C_j$, then $gz_i=z_j$. The action of $G_2$ on $\Span(u,v)$ is tautological. We see that this action is well defined and $g(xy-yx)=xy-yx$, $gc=g(\sum_{i=1}^m z_m\sum_{h\in C_m}h)=c$, $ge_{G_1}=e_{G_1}$. Hence $G_2$ acts on $\tilde{O}$ and the action of $G_1\subset G_2$ is trivial. Therefore we get an action of $G$ on $\tilde{O}$. The map $\tilde{\mc{O}}\twoheadrightarrow \CN[u,v]^{G_1}$ intertwines the action of $G$ by construction.
\end{proof}

%Suppose that $\mc{A}$ is a deformation of $\CN[u,v]^G$ over $B$. It is easy to see that $\CBH(\mc{A})=\mc{A}\otimes_{\tilde{A}}H_u$ is a flat deformation of $\CN[u,v]^G$ over $B\otimes_{\tilde{B}}\CN[z_1,\ldots,z_m]$. There exists a natural action of $W$ on $\mc{A}\otimes_{\tilde{A}}H_u$. It is easy to prove that $\mc{A}\cong \CBH(\mc{A})^W$.

Let $Z(\CN[G])=V$, then the base of the deformation $\tilde{\mathcal{O}}$ is naturally isomorphic to $\CN[V]$. We deduce from theorem~\ref{UniversalCBHAlgebraThr} that $R_0$, the base of $\mc{A}_0$, is isomorphic to $\CN[V/W]$. 

In Section~\ref{NormalCaseSec} we introduced an action of $G$ on $R_0$, so $G$ acts on $V/W$. It follows from universality of $\mc{A}_0$ that the natural projection from $V$ to $V/W$ intertwines the action of $G$. 

Recall that there is a good action of $G$ on $\mc{B}_0=\mc{A}_0\otimes_{R_0}\CN[(V/W)^G]$ and $\mc{B}_0^G\subset \mc{B}_0$ is a universal deformation of $A^{G_2}\subset A^{G_1}$.

%\begin{lem}
%Suppose that $\mc{A}_1$, $\mc{A}_2$ are deformations of $\CN[u,v]^{G_2}$ over $B$, $\phi_1,\phi_2$ are the unique morphisms of deformations from $\tilde{A}$ to $\mc{A}_1$, $\mc{A}_2$ and $\phi_1|_{R_0}=\phi_2|_{\tilde{B}}$. Then $\mc{A}_1$ is isomorphic to $\mc{A}_2$.
%\end{lem}
%\begin{proof}
%This is an easy corollary of Lemma~\ref{DeterminedByBLem}.
%\end{proof}

Suppose that $1_{G_1},S_1,\ldots,S_k$ are the orbits of $G_2$-action on $G_1$. Then $1_{G_1},h_1=\sum_{g\in S_1}g,\ldots,h_k=\sum_{g\in S_k}g$ is a basis of $Z(\CN[G_1])\cap Z(\CN[G_2])$. Consider the CBH algebra with parameter $\suml_{i=1}^k t_ih_i\in Z(\CN[t_1,\ldots,t_k][G])$, denote it by $\mc{B}_1$. 

There is a $\CN[t_1,\ldots,t_k]$-linear action of $G_2$ on $\CN[t_1,\ldots,t_k]\langle x,y\rangle\# G_1$: $G_2$ acts on $x,y$ via $G_2\subset \SL(2,\CN)$ and $G_2$ acts on $G_1$ by conjugation. From this action we get a good action of $G$ on $\mc{B}_1$.

Using Proposition~\ref{UniversalIsIntertwiningProp} we get a morphism of deformations $\psi\colon \mc{B}_0\to\mc{B}_1$ that intertwines the action of $G$.

%Define a homomorphism from $\CN[z_1,\ldots,z_m]$ to $\CN[t_1,\ldots,t_k]$ as follows: if $S_i$ contains $C_j$ then $z_j$ goes to $t_i$. This homomorphism corresponds to inclusion $V^G\subset V$. We also see that $\tilde{\mc{O}}\otimes_{\CN[z_1,\ldots,z_m]}\CN[t_1,\ldots,t_k]$ is isomorphic to $\mc{B}_1$. This gives us a morphism of deformations $\phi\colon \tilde{\mc{O}}\to\mc{B}_1$.

\begin{prop}
\label{InjectivityOfChi12Prop}
There exists a subgroup $H$ of $W$ satisfying the conditions of Corollary~\ref{WeylActuallyActsCor} such that $\psi$ gives an isomorphism between $\mc{B}_0$ and $\mc{B}_1^H$. Moreover, $H$ acts on $\mc{O}_c^2$ and $(\mc{O}_c^2)^H\subset (\mc{O}_c^1)^H$ is a universal commutative deformation of $A^{G_2}\subset A^{G_1}$, where $c$ is the parameter for $\mc{B}_1$.
\end{prop}
\begin{proof}
Define $H$ as follows: $H=\{w\in W \mid wV^G=V^G\}$. For any $w\in H$ we have $w(c)=w(\sum t_i h_i)=\sum t_i w(h_i)$. Since $h_i$ belongs to $V^G$ we get $wh_i\in V^G$, in particular we get $wh_i=\sum_j M_{ij} h_j$. We define the right action of $H$ on $\CN[t_1,\ldots,t_k]$ by $\phi_{w}(t_j)=\sum_i M_{ij} t_i$. This action satisfies $\phi_w(c)=w(c)$, hence the corresponding left action of $H$ satisfies the conditions of Corollary~\ref{WeylActuallyActsCor}.

The action of $G$ is good, $H$ acts by automorphisms of deformations, hence the isomorphism $\chi\colon \mc{B}_1/(t_1,\ldots,t_k)\cong A^{G_1}$ intertwines the action of $G\times H$, where $H$ acts on $A^{G_1}$ trivially. Since the action of $G$ is $\CN[t_1,\ldots,t_k]$-linear, $ghg^{-1}h^{-1}$ is a $\CN[t_1,\ldots,t_k]$-linear map that satisfies $\chi ghg^{-1}h^{-1}=\chi$, in other words $ghg^{-1}h^{-1}$ is a $\CN[t_1,\ldots,t_k]$-linear automorphism of deformations. Using Corollary~\ref{UniqueArrowDeformationCor} with $G_2=G_1$ we get that $ghg^{-1}h^{-1}=id$. It follows that the actions of $G$ and $H$ on $\mc{B}_1$ commute.

The restriction of $\psi$ on $\CN[(V/W)^G]$ corresponds to the natural map $f$ from  $V^G/H$ to $(V/W)^G$. We will prove that  $f$ is an isomorphism. On the level of points $f$ sends an $H$-orbit $O$ to $WO$.

We have a root system in $V$ corresponding to $W$. It gives us a $W$-invariant $\RN$-form of $V$: $V=V_{\RN}+i V_{\RN}$, $WV_{\RN}=V_{\RN}$. Now we define a notion of a dominant element of $V$. Suppose that $x=x_{Re}+x_{Im}\in V$. If $x_{Re}\neq 0$, we say that $x$ is dominant if and only if $x_{Re}$ is dominant. Otherwise we say that $x$ is dominant if and only if $x_{Im}$ is dominant. Consider a $W$-orbit $Wx$. If $(wx)_{Re}=0$ for some $w$, then $(Wx)_{Re}=\{0\}$. It follows that each $W$-orbit contains a unique dominant element.

Let us prove that $f$ is a bijection. Let $O$ be an $W$-orbit such that $gO=O$ for every $g\in G$. 
Consider a unique dominant $x\in O$. Since $G$ acts by automorphisms of Dynkin diagram, $gx$ is also dominant. Hence $gx=x$ for every $g\in G$. This proves the surjectivity of $f$.

Let $\Phi$ be the root system inside $V$ corresponding to $W$. In the case when all $G$-orbits in Dynkin diagram do not contain edges there is a well-known construction of folded root system $\Phi_1$ inside $V^G$. It is defined as follows: $\Phi_1=\{\sum_{g\in G}g\rho\mid\rho\in\Phi\}\setminus\{0\}$. The set of positive roots $\Phi_{1+}$ is defined in the same way with $\Phi_+$ instead of $\Phi$. Denote by $W_1$ the corresponding Weyl group. Let us prove that $H$ contains $W_1$. It is enough to prove that $H$ contains simple reflections. This is clear since for every simple root $\alpha$ of $\Phi_1$ with $\alpha=\sum_{g\in G}g\beta$  we have $s_{\alpha}=\prodl_{\gamma\in G\beta}s_{\gamma}|_{V^G}$.

The only case when $G$-orbit has an edge is the case of $A_{2n}$ Dynkin diagram and $G=C_2$. In this case $\Phi_1=BC_n$. For only simple root $\alpha=\beta_1+\beta_2$ in $BC_n$ with $(\beta_1,\beta_2)\neq 0$ we have $s_{\alpha}=s_{\beta_1+\beta_2}|_{V_G}$. It follows that in this case $H$ also contains $W_1$.

Let us prove that $x\in V^G$ is dominant for $\Phi_1$ if and only if it is dominant for $\Phi$. Indeed, $(x_{Re},\rho)=\frac{1}{|G|}\suml_{g\in G}(g(x_{Re}),\rho)=(x_{Re},\frac{1}{|G|}\suml_{g\in G}g\rho)$, the same for $x_{Im}$. It follows that each $W$-orbit contains no more than one $W_1$-orbit. Hence $f$ is bijective and $H=W_1$.

So $f$ is a bijection between normal algebraic varieties. It follows easily from Zariski Main Theorem that $f$ is an isomorphism.

Therefore $\psi$ gives an isomorphism between $\mc{B}_0$ and $\mc{B}_1^H$. 

From the proof of Proposition~\ref{PropConstructionOfCBHInclusion} we get that $\mc{O}_c^2=(\mc{O}_c^1)^G$. Hence $H$ acts on $\mc{O}_c^2$. Since $\psi$ intertwines the action of $G$ it gives an isomorphism between $\mc{B}_0^G$ and $(\mc{O}_c^2)^H$. It follows that $(\mc{O}_c^2)^H\subset (\mc{O}_c^1)^H$ is a universal commutative deformation of $A^{G_2}\subset A^{G_1}$.
\end{proof}
\section{Descriprion of universal noncommutative deformation.}
\label{NoncommutativeNormalSec}
\subsection{Noncommutative parameter}
\label{NonCommFromPoissonSubsec}
%Suppose that $B$ is a commutative graded algebra. Recall that $G_1\triangleleft G_2$ are finite subgroups of $\SL(2,\CN)$, there exists a group $W$ acting on $Z(B[G_1])$ and a subgroup $H$ of $W$ acting on $Z(B[G_1])\cap Z(B[G_2])$.
In this section we will classify deformations of $A^{G_2}\subset A^{G_1}$ in the general case.

Let $\chi_{reg}$ be the character of regular representation of $G_1$. Since $\chi_{reg}$ generates the subgroup of imaginary roots inside $Z(\CN[G_1])^*$ we get $W\chi_{reg}=\chi_{reg}$. It follows that the action of $W$ on $Z(\CN[G_1])$ leaves the coefficient on $1$ untouched.

Recall that $H$ is a subgroup of $W$ that acts on $Z(\CN[G_1])\cap Z(\CN[G_2])$. Consider $f=\frac{1}{\lvert H\rvert}h1$. This is an $H$-invariant element with coefficient on $1$ equal to $1$.

Let $R=\CN[z,t_1,\ldots,t_k]$, $c=\sum t_ih_i+zf$.

Let $\mc{O}_c^1$ be a CBH deformation of $A^{G_1}$ with parameter $c$. Define the good action of $G$ on $\mc{O}_c^1$ similarly to the Lemma~\ref{G2G1ActsByDiagramAutomLem}.

Arguing as in the proof of Proposition~\ref{InjectivityOfChi12Prop} we see that $H$ and the CBH parameter $\sum t_ih_i+zf\in Z(R[G_1])$ satisfy the conditions of Corollary~\ref{WeylActuallyActsCor} with the trivial action of $H$ on $z$. We also see that the action of $G$ and $H$ on $\mc{O}_c^1$ commute. Using Proposition~\ref{InjectivityOfChi12Prop} we get the following lemma:

\begin{lem}
\label{LemNonCommCBHDeformation}
Let $R=\CN[z,t_1,\ldots,t_k]$, $c=\sum t_i h_i+zf\in Z(R[G_1])\cap Z(R[G_2])$. Then $H\times G$ acts on $\mc{O}_c^1$. The inclusion $(\mc{O}_c^2)^H\subset (\mc{O}_c^1)^H$ is a deformation of $A^{G_2}\subset A^{G_1}$ such that the base change $z\mapsto 0$ sends this deformation to a universal commutative deformation 
%Hence we have a deformation $(\mc{O}_c^2)^H\subset (\mc{O}^1_{\sum t_ih_i+zf})^H$ over $\CN[t_1,\ldots,t_k,z]^H=R_0\otimes \CN[z]$. If we send $z$ to $0$ we get a universal commutative deformation by Proposition~\ref{InjectivityOfChi12Prop}.
\end{lem}

Now we need several technical statements.

\begin{lem}
\label{PoissonLiftingLem}
Suppose that $K$ is a nontrivial subgroup of $\SL(2,\CN)$, $P\colon\CN[u,v]^K\times\CN[u,v]^K\to\CN[u,v]^K$ is a nonzero bilinear antisymmetric homogeneous mapping of degree $i<0$ satisfying Leibniz identity. Then $i=-2$ and $P$ is proportional to the standard Poisson bracket on $\CN[u,v]^K$.
\end{lem}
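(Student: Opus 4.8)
The plan is to interpret $P$ as an algebraic bivector field on the surface $X=\Spec\CN[u,v]^G$ and to compare it with the bivector $\Pi_0$ dual to the standard symplectic form $du\wedge dv$. First I would record that the standard Poisson bracket on $\CN[u,v]$ is $\SL(2)$-invariant, hence $G$-invariant, so it restricts to the bracket on $\CN[u,v]^G$ named in the statement; call the corresponding global biderivation $\Pi_0$. Since $P$ is a derivation in each slot, it factors through K\"ahler differentials, so on any locus where $\Omega^1$ is locally free of rank two it is a section of $\bigwedge^2 T$. By Statement~\ref{CategoricalQuotientStat}, $X$ is smooth exactly at the free orbits; as every nonidentity element of a finite $G\subset\SL(2)$ fixes only the origin, $G$ acts freely on $\CN^2\setminus\{0\}$, so $X^{\mathrm{sm}}=X\setminus\{0\}$ is a smooth surface whose complement has codimension two.

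Next I would trivialize $\bigwedge^2 T_{X^{\mathrm{sm}}}$ using $\Pi_0$. The bivector dual to $du\wedge dv$ is nonvanishing on $\CN^2\setminus\{0\}$ and $G$-invariant, and $\CN^2\setminus\{0\}\to X^{\mathrm{sm}}$ is \'etale, so $\Pi_0$ descends to a nowhere-vanishing section of the line bundle $\bigwedge^2 T_{X^{\mathrm{sm}}}$. Consequently $P|_{X^{\mathrm{sm}}}=g\,\Pi_0$ for a unique regular function $g$ on $X^{\mathrm{sm}}$. Because Kleinian singularities are normal and $X\setminus X^{\mathrm{sm}}$ has codimension two, algebraic Hartogs gives $\mathcal{O}(X^{\mathrm{sm}})=\CN[u,v]^G$, so $g\in\CN[u,v]^G$. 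Since $X$ is irreducible and $X^{\mathrm{sm}}$ is dense, the identity $P=g\,\Pi_0$ holds as an equality of global biderivations on $X$, and $g$ is homogeneous because $P$ and $\Pi_0$ are.

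Finally I would run the degree bookkeeping. The standard bracket sends $A_p\times A_q$ into $A_{p+q-2}$, i.e. $\Pi_0$ has degree $-2$; as $P$ has degree $i$ we get $\deg g=i+2$. For nontrivial $G$ the invariants vanish in degrees strictly between $0$ and $2$: indeed $(\CN[u,v]^G)_1=\Span(u,v)^G=0$ because the tautological representation contains no trivial subrepresentation, which is precisely the observation used in Corollary~\ref{NoDerivationOfHegativeDegCor}. Hence if $i=-1$ then $\deg g=1$ forces $g=0$, and if $i\le-3$ then $\deg g<0$ again forces $g=0$; either way $P=0$, contradicting that $P$ is nonzero. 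The only surviving case is $i+2=0$, where $g$ is a nonzero constant and $P=g\,\Pi_0$ is proportional to the standard Poisson bracket, giving $i=-2$.

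The main obstacle is the passage over the singular point: one must be sure that the coefficient function extends from $X^{\mathrm{sm}}$ to all of $X$ and that $\Pi_0$ really generates $\bigwedge^2 T_{X^{\mathrm{sm}}}$ rather than merely being a nonzero section; both rest on normality together with the codimension-two estimate, which is why I reduce to the smooth locus before invoking Hartogs. A purely algebraic alternative would lift $P$ to a $G$-equivariant biderivation of $\CN[u,v]$ via Theorem~\ref{MainDerThr} and use that every biderivation there equals $h\{\,\cdot\,,\cdot\,\}$; I would avoid it here because iterating the lift in both arguments while keeping track of $G$-equivariance is delicate, whereas the geometric route sidesteps this.
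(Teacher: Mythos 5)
Your proof is correct, and it takes a genuinely different route from the paper's. The paper argues upstairs: citing \cite{Etingof-Ginzburg}, Lemma 2.23, it lifts $P$ to a $G$-equivariant bracket of degree $i$ on $\CN[u,v]$; such a bracket is $h\,\partial_u\wedge\partial_v$ with $h=\{u,v\}$ a $G$-invariant element of degree $i+2$, so $i\geq -2$ automatically, and $i=-1$ is excluded because the tautological representation of a nontrivial $G$ has no invariant vectors. You instead stay downstairs on $X=\Spec\CN[u,v]^G$: you trivialize $\bigwedge^2 T_{X^{\mathrm{sm}}}$ by the descended standard bivector $\Pi_0$, write $P=g\,\Pi_0$ on the smooth locus, and use normality together with the codimension-two estimate (algebraic Hartogs) to extend $g$ to an element of $\CN[u,v]^G$ of degree $i+2$; then the same representation-theoretic fact, in the form $(\CN[u,v]^G)_1=0$, kills the cases $i=-1$ and $i\leq -3$. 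So the two arguments pivot on an identical final observation, but your normal form $P=g\,\Pi_0$ with $g$ invariant is reached without ever lifting $P$ off the quotient, which makes the write-up self-contained --- in effect you reprove the relevant surface case of the Etingof--Ginzburg lemma in descended form --- whereas the paper is shorter because it delegates exactly that geometric work (the same \'etale-over-the-smooth-locus, codimension-two mechanism) to the cited lemma. Your version also avoids the equivariance bookkeeping of the lift, which you correctly identify as the delicate point of the algebraic alternative. One step worth writing out in full is the homogeneity of $g$: decompose $g$ into homogeneous components and use that $\CN[u,v]^G$ is a domain, so a nonzero component $g_d$ would contribute a nonzero component of $P$ in degree $d-2$, forcing $g=g_{i+2}$.
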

\begin{proof}
Proceeding as in~\cite{Etingof-Ginzburg},~Lemma 2.23 we get that $P$ is a restriction of some $K$-equivariant Poisson bracket of degree $i$ on $\CN[u,v]$. Hence $i\geq -2$. If $i=-1$, then $\{u,v\}$ is a $K$-invariant nonzero element of $\CN^2$. There are no such elements for nontrivial $K$.
\end{proof}

\begin{prop}
\label{ZExistsProp}
Suppose that $\mc{A}$ is a deformation of $\CN[u,v]^K$ over $R$. Then there exists an element $z\in R$ of degree $2$ such that $fg-gf+\mc{A}R^{>2}=z\langle f+\mc{A}R^{>0},g+\mc{A}R^{>0}\rangle$, where $\langle\cdot,\cdot\rangle$ is the standard Poisson bracket on $\CN[u,v]$. If $z=0$ then $\mc{A}$ is commutative.
\end{prop}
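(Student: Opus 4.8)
The plan is to analyze the commutator of $\mc{A}$ order by order with respect to the $B^{>0}$-adic filtration and to pin down its leading term using the classification of low-degree biderivations in Lemma~\ref{PoissonLiftingLem}. Write $A=\CN[u,v]^G$. By flatness (Lemma~\ref{DeformationOfIdLem}), a homogeneous $B$-basis lifting a basis of $A$ gives an identification $\mc{A}\cong B\otimes_{\CN}A$ of graded $B$-modules, so that $\mc{A}B^{>k}/\mc{A}B^{>k+1}\cong B_{k+1}\otimes A$ for every $k\geq 0$. Since $A$ is commutative, for any lifts $f,g\in\mc{A}$ of $\bar f,\bar g\in A$ the commutator $fg-gf$ lands in $\mc{A}B^{>0}$, and for arbitrary $x,y\in\mc{A}$ one has $xy-yx\in\mc{A}B^{>0}$ as well.

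First I would show the $B_1$-component of the commutator vanishes. Set $P_1(\bar f,\bar g)$ to be the image of $fg-gf$ in $\mc{A}B^{>0}/\mc{A}B^{>1}\cong B_1\otimes A$. This is independent of the lifts: replacing $f$ by $f+a$ with $a\in\mc{A}B^{>0}$ changes the commutator by $[a,g]$, and writing $a=\sum_j b_ja_j'$ with $b_j\in B^{>0}$ central gives $[a,g]=\sum_j b_j[a_j',g]\in B^{>0}\cdot\mc{A}B^{>0}\subseteq\mc{A}B^{>1}$. The identity $[fg,h]=f[g,h]+[f,h]g$ shows $P_1$ is an antisymmetric biderivation, and contracting with any $\xi\in B_1^*$ produces an antisymmetric biderivation $A\times A\to A$ of degree $-1$ (the commutator preserves total degree, while the $B_1$-factor carries degree $1$). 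By Lemma~\ref{PoissonLiftingLem} a nonzero such biderivation must have degree $-2$; hence $\xi\circ P_1=0$ for all $\xi$, so $P_1=0$ and $xy-yx\in\mc{A}B^{>1}$ for all $x,y\in\mc{A}$.

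Now that $P_1=0$, the $B_2$-component $P_2(\bar f,\bar g):=(fg-gf)\bmod\mc{A}B^{>2}\in B_2\otimes A$ is well-defined: the correction $[a,g]=\sum_j b_j[a_j',g]$ now satisfies $[a_j',g]\in\mc{A}B^{>1}$, whence $b_j[a_j',g]\in B^{>0}\cdot\mc{A}B^{>1}\subseteq\mc{A}B^{>2}$. As before $P_2$ is an antisymmetric biderivation, and contracting with $\eta\in B_2^*$ yields a biderivation of degree $-2$. By Lemma~\ref{PoissonLiftingLem} each $\eta\circ P_2$ equals $\lambda(\eta)\langle\cdot,\cdot\rangle$ for a scalar $\lambda(\eta)$ depending linearly on $\eta$; dualizing (using $\dim B_2<\infty$) gives a single element $z\in B_2$ with $P_2(\bar f,\bar g)=z\langle\bar f,\bar g\rangle$ for all $\bar f,\bar g$. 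This is precisely $fg-gf\equiv z\langle\bar f,\bar g\rangle\pmod{\mc{A}B^{>2}}$.

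Finally, suppose $z=0$, so $xy-yx\in\mc{A}B^{>2}$ for all $x,y$. I would bootstrap by induction on $k\geq 2$: assuming $[x,y]\in\mc{A}B^{>k}$ for all $x,y$, the map $\bar f,\bar g\mapsto[f,g]\bmod\mc{A}B^{>k+1}\in B_{k+1}\otimes A$ is well-defined (the lift correction lies in $B^{>0}\cdot\mc{A}B^{>k}\subseteq\mc{A}B^{>k+1}$) and is an antisymmetric biderivation whose contractions with $B_{k+1}^*$ have degree $-(k+1)\leq -3$. Lemma~\ref{PoissonLiftingLem} forces it to vanish, so $[x,y]\in\mc{A}B^{>k+1}$. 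Hence $[x,y]\in\bigcap_k\mc{A}B^{>k}=0$, the intersection being zero because a homogeneous element of degree $d$ meets $\mc{A}B^{>k}\cong B^{>k}\otimes A$ trivially once $k\geq d$; thus $\mc{A}$ is commutative. The main obstacle is keeping the lift-independence honest at each order: one genuinely must kill $P_1$ before the degree-$2$ coefficient $z$ can even be defined, and once this biderivation bookkeeping is arranged, every step reduces to the degree constraint in Lemma~\ref{PoissonLiftingLem}.
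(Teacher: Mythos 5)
Your proof is correct and takes essentially the same route as the paper: both analyze the leading term of the commutator in the filtration $\mc{A}B^{>k}$, note that it is a well-defined antisymmetric biderivation on $\CN[u,v]^G$ (well-defined precisely because the previous level vanishes), and invoke Lemma~\ref{PoissonLiftingLem} to force the leading level to be $2$ with leading term a multiple of the Poisson bracket, extracting $z$ by contracting with functionals on $B_2$. The paper merely compresses your three stages (killing $P_1$, extracting $z$ from $P_2$, and the inductive bootstrap when $z=0$) into a single application of the lemma to the smallest nonvanishing level $i$.
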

\begin{proof}
Let $i$ be the smallest nonnegative integer such that $fg-gf+\mc{A}R^{>i}$ is not identically zero (if such $i$ does not exist, we are done with $z=0$). Since $\CN[u,v]^K$ is commutative, $i>0$. The map $(f+\mc{A}R^{>0},g+\mc{A}R^{>0})\mapsto fg-gf+\mc{A}R^{>i}$ is well-defined and satisfies the Leibniz rule.

Take a linear functional $\phi\in (R^i)^*$ such that $\phi(fg-gf+\mc{A}R^{>i})$ is not identically zero. We get a nonzero bilinear homogeneous form of degree $-i$ on $\CN[u,v]^{K}$ satisfying Leibniz rule. The proposition follows easily from Lemma~\ref{PoissonLiftingLem}. 
\end{proof}
\begin{lem}
\label{ZequalsZlem}
Applying this proposition to a deformation $(\mc{O}^1_{\sum t_ih_i+zf})^H$ we get an element $z'$ in $R_0\otimes \CN[z]$. Then $z'=z$.
\end{lem}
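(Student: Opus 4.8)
The plan is to identify the element $z'$ furnished by Proposition~\ref{ZExistsProp} with the coefficient of the identity in the CBH parameter, and then to observe that this coefficient is literally $z$. Set $\mc{A}=(\mc{O}^1_{\sum t_ih_i+zf})^H$, a deformation of $\CN[u,v]^{G_1}$ over $B=\tilde B\otimes\CN[z]$. By Proposition~\ref{ZExistsProp}, the element $z'\in B^2$ is characterized by $ab-ba+\mc{A}B^{>2}=z'\langle\bar a,\bar b\rangle$ for all $a,b\in\mc{A}$, where $\langle\cdot,\cdot\rangle$ is the standard Poisson bracket on $\CN[u,v]^{G_1}$ and $\bar a,\bar b$ are the images in $\CN[u,v]^{G_1}$. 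Since the commutators in $\mc{A}$ are computed inside the ambient CBH algebra $\mc{O}^1_c$ with $c=\sum t_ih_i+zf$, the whole problem reduces to computing the leading term of the commutator in the spherical subalgebra $\mc{O}^1_c$ and comparing it with $\langle\cdot,\cdot\rangle$.

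Here the key input is the CBH description of the commutator. Writing $c=\sum_{g\in G_1}c_gg$, choose $G_1$-invariant lifts $a,b\in\CN\langle u,v\rangle$ of two elements of $\CN[u,v]^{G_1}$; then $eae=ae=ea$ and hence $[eae,ebe]=[a,b]e$, so no scalar is introduced by the idempotent. Reducing $[a,b]$ modulo $uv-vu-c$ to first order in $c$, the contributions of the non-identity group elements $g\neq 1$ cancel (this is the mechanism behind the recalled statement that $\mc{O}_c$ is commutative iff $c_1=0$), and what survives is exactly $c_1\langle\bar a,\bar b\rangle$, the standard Poisson bracket scaled by the identity coefficient $c_1$. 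I would then read off $c_1$ for $c=\sum t_ih_i+zf$: each $h_i=\sum_{g\in S_i}g$ runs over a $G_2$-orbit $S_i\subset G_1\setminus\{1\}$ and so contributes $0$ to the coefficient of $1_{G_1}$, whereas $f$ has coefficient $1$ on $1_{G_1}$ by construction. Therefore $c_1=z$, and the leading commutator in $\mc{A}$ equals $z\langle\cdot,\cdot\rangle$.

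Comparing this with the defining relation of $z'$ gives $z'=z$, once one notes that the relation pins down the element of $B^2$ uniquely: picking $a,b$ with $\langle\bar a,\bar b\rangle$ a fixed nonzero homogeneous element and using freeness of $\mc{A}$ over $B$, both $z$ and $z'$ multiply this element to the same thing, forcing equality. As an independent check that $z'$ carries no $t_i$-terms, I would specialize $z=0$: by Remark~\ref{NonCommCBHRem} the fiber $\mc{A}|_{z=0}$ is the universal \emph{commutative} deformation, whose commutators vanish, so $z'|_{z=0}=0$, consistent with $z'=z$. The main obstacle is the boxed assertion of the second paragraph, namely that the leading bracket coefficient is \emph{exactly} $c_1$ rather than merely a nonzero multiple of it; this is precisely where the normalization of the standard Poisson bracket ($\langle u,v\rangle=1$) and the cancellation of the $g\neq 1$ terms must be verified, via the elementary identities $[eae,ebe]=[a,b]e$ and $[u,v]=c$ together with the direct first-order reduction modulo $uv-vu-c$.
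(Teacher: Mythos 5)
Your proposal is correct in substance, but it takes a genuinely different route from the paper, for the simple reason that the paper offers no argument at all here: its entire proof is a citation to page 15 of~\cite{Loseu239}. You instead reconstruct the computation behind that reference: pass to the ambient CBH algebra (your identity $[eae,ebe]=[a,b]e$ for $G_1$-invariant lifts, and the freeness argument pinning $z'$ down inside $B_2\subset\CN[t_1,\ldots,t_k,z]_2$, are both correct), show that the first-order commutator is $c_1\langle\cdot,\cdot\rangle$, and read off $c_1=z$ for $c=\sum t_ih_i+zf$, since the orbits $S_i$ avoid $1_{G_1}$ while $f$ has coefficient $1$ on $1_{G_1}$ by construction. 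The one step you flag as open --- that the coefficient is \emph{exactly} $c_1$ rather than a nonzero multiple --- can be closed without any term-by-term cancellation, using only ingredients already in the paper: the first-order bracket on $\CN[u,v]^{G_1}$ is antisymmetric, satisfies Leibniz, is homogeneous of degree $-2$, and depends linearly on $c$, so by Lemma~\ref{PoissonLiftingLem} it equals $\lambda(c)\langle\cdot,\cdot\rangle$ for some linear functional $\lambda$ on $Z(\CN[G_1])_2$; the quoted CBH criterion (commutativity if and only if $c_1=0$) forces $\lambda$ to vanish on the hyperplane $c_1=0$, hence $\lambda(c)=\kappa c_1$; finally $\kappa=1$ is verified on the single parameter $c=t\cdot 1_{G_1}$, where $\mc{O}_c\cong\bigl(\CN[t]\langle u,v\rangle/(uv-vu-t)\bigr)^{G_1}$ and the induced bracket is visibly $t$ times the standard one (normalized by $\langle u,v\rangle=1$). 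With that patch your argument is complete and self-contained, which is what it buys over the paper's approach; the paper's citation is shorter but hides the normalization, which is the actual content of the lemma.
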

\begin{proof}
See, for example, page 15 of~\cite{Loseu239}.
\end{proof}
\subsection{Scheme Y}
This subsection is inspired by Subsections 3.3-3.5 in~\cite{Loseu}.

Denote $A^{G_i}$ by $A_i$.

Let us construct an affine scheme $Y$. It will parametrize deformations of $A_2\subset A_1$ with additional data. Algebra $A_i$ is isomorphic to $\CN[x_i,y_i,z_i]/(f_i(x_i,y_i,z_i))$. Let $D$ be the least common multiple of the degrees of $x_1,y_1,z_1,x_2,y_2,z_2$, $e$ be the maximum of degrees of $f_i$ with respect to $x_i,y_i,z_i$. Let $m=7 D$.

\begin{lem}
\label{LemMultiplicationIsSurjective}
For any $k>0$ we have $(A_1)_{\leq m}^{\cdot k}=(A_1)_{\leq km}$.
\end{lem}
\begin{proof}
It is enough to prove that for $l\geq m$ we have $A_m\cdot A_l=A_{m+l}$. Let $x_1^ay_1^bz_1^c$ be an element of $A_{k+l}$. By definition of $D$ there exist $p,q,r$ such that $\deg (x_1^p)=\deg(y_1^q)=\deg(z_1^r)=D$. Now it is easy to find $a_1p\leq a$, $b_1q\leq b$, $c_1r\leq c$ such that $a_1p\deg x_1+b_1q\deg y_1+c_1r\deg z_1=m$.
\end{proof}

Fix a homogeneous basis $P_1,\ldots,P_N$ of $\CN[x,y]^{G_1}_{\leq me}$ adapted to the flag \[(\CN[x,y]^{G_2})_{\leq me}\subset(\CN[x,y]^{G_1})_{\leq me}.\] We assume that for some $M$ elements $P_1,\ldots,P_M$ form a basis of $\CN[x,y]^{G_2}_{\leq me}$.

\begin{defn}
Suppose that $\mc{A}_2\subset\mc{A}_1$ is a deformation of $A_2\subset A_1$. We say that a sequence of homogeneous elements $a_1,\ldots, a_M\in\mc{A}_2$, $a_{M+1},\ldots,a_N$ is a lift of $P_1,\ldots,P_N$ if the images of $a_1,\ldots,a_M$ in $\CN[x,y]^{G_2}$ coincide with $P_1,\ldots,P_M$ and the images of $a_{M+1},\ldots, a_N$ coincide with $P_{M+1},\ldots,P_N$.
\end{defn}

\begin{stat}
\label{RepresentingObjectForFStat}
There exists a subscheme $Y$ of \[T=\Hom(\bigoplus_{i=1}^e (A_1)_{\leq m}^{\otimes i},(A_1)_{\leq me})\] and a unipotent group scheme $U$ such that \begin{enumerate}
\item
$\CN[Y]$ and $\CN[U]$ are positively graded.
\item
$U$ acts on $Y$ and the corresponding map $\CN[Y]\to \CN[Y]\otimes\CN[U]$ preserves grading.
\item
For any graded algebra $R$ homomorphisms of graded algebras from $\CN[Y]$ to $R$ are in one-to-one correspondence with isomorphism classes of deformations of $i\colon\CN[x,y]^{G_2}\subset\CN[x,y]^{G_1}$ over $R$ with a chosen lift of $P_1,\ldots,P_N$.
\item
$\Hom(\CN[U],R)$-orbits in $\Hom(\CN[Y],R)$ are precisely isomorphism classes of deformations of $A_2\subset A_1$ over $R$.
\end{enumerate}
\end{stat}
\begin{proof}
If $W,V$ are graded finite-dimensional vector spaces, then $\Hom(W,V)$ is naturally graded. This defines a grading on $T$ and $\CN[T]$. Suppose that $\alpha$ is an element of $T$. Then the following are polynomial conditions on $\alpha$ for all $k=1\ldots e$:
\begin{enumerate}
\item
$\alpha(u_1\otimes u_2\otimes\ldots\otimes u_k)=\alpha(\alpha(u_1\otimes u_2\otimes\cdots\otimes u_l)\otimes\alpha(u_{l+1}\otimes\ldots\otimes u_k))$ for all $u_1,u_2,\ldots,u_k$ such that the right-hand side is defined.
\item
$\alpha$ maps $(\CN[x,y]^{G_2}_{\leq m})^{\otimes k}$ to $\CN[x,y]^{G_2}_{\leq mk}$.
\item
$\alpha(u_1\otimes\ldots\otimes u_k)-u_1u_2\ldots u_k$ belongs to $(\CN[x,y]^{G_1})_{<\deg u_1+\ldots+\deg u_k}$ for all homogeneous $u_1,\ldots,u_k$. 
\end{enumerate}
These conditions define a subscheme $\tilde{Y}$. It follows from the third condition that $\CN[\tilde{Y}]$ is positively graded. Suppose that $\alpha$ is a homogeneous $R$-point of $\tilde{Y}$. Denote $(A_1)_{\leq me}$ by $V$. Consider the algebra $\mc{A}=R\otimes T(V)/(\alpha(u_1\otimes u_2\otimes\cdots\otimes u_k)-u_1\otimes u_2\otimes\cdots\otimes u_k)=B\otimes T(V)/I$, where we take all $k=1,\ldots,e$ and $u_1,\ldots,u_k\in (A_1)_{\leq m}$ in the definition of $I$.  We see that $\mc{A}$ is a graded $R$-algebra. We have \begin{multline*}\mc{A}/R^{>0}\mc{A}\cong R\otimes T(V)/(I+R^{>0})=\\
R\otimes T(V)/((\alpha(u_1\otimes u_2\otimes\cdots\otimes u_k)-u_1\otimes u_2\otimes\cdots\otimes u_k), R^{>0})=\\
R\otimes T(V)/(u_1\cdots u_k-u_1\otimes u_2\otimes\cdots\otimes u_k,R^{>0})=T(V)/(u_1\cdots u_k-u_1\otimes\cdots\otimes u_k).
\end{multline*} Here we used the third condition on $\alpha$ to obtain $\alpha(u_1\otimes \cdots\otimes u_k)-u_1\ldots u_k\in R^{>0}\mc{A}$. We get a surjective map from $\mc{A}/R^{>0}\mc{A}$ to $A_1$. Using Lemma~\ref{LemMultiplicationIsSurjective} we see that $\mc{A}/R^{>0}\mc{A}$ is generated by $(A_1)_{\leq m}\subset V$. Hence $\mc{A}/R^{>0}\mc{A}$ is generated by $x_1,y_1,z_1$. Using the third condition on $\alpha$ for $k=2,2,2,e$ we get that $[x_1,y_1]=[y_1,z_1]=[z_1,x_1]=f_1(x_1,y_1,z_1)=0$ in $\mc{A}/R^{>0}\mc{A}$. It follows that $\mc{A}/R^{>0}\mc{A}$ is isomorphic to $A_1$.

%Fix a basis of $V$. It gives us a finite set of generators $S$ for $I$.  We obtain a short exact sequence $(B\otimes T(V))^{\lvert S\rvert}\to B\otimes T(V)\to \mc{A}$ such that the matrix 

The remaining condition on $\mc{A}$ is that $\mc{A}$ should be a free $R$-module. Fix $w_1,w_2,\ldots\in \CN\langle x_1,y_1,z_1\rangle$ such that the images of $w_i$ in $A_1$ form a basis. We see that the images of $w_i$ generate $\mc{A}$ as an $R$-module. Using relations with $[x_1,y_1]$, $[y_1,z_1]$, $[z_1,x_1]$ and $f_1(x_1,y_1,z_1)$ we can express any $w_iw_j$ as a sum $r_{ijk}w_k$, where $r_{ijk}$ depend algebraically on $\alpha$. 

If $\mc{A}$ is a free $R$-module then it coincides with $\oplus Rw_i$ and the multiplication is given by $w_iw_j=\sum r_{ijk} w_k$. In this case $r_{ijk}$ satisfy associativity constraint. The associativity constraint is an algebraic condition on $\alpha$.

On the other hand, suppose that $r_{ijk}$ satisfy associativity constraint. In this case we have an algebra $\mc{A}'=\oplus Rw_i$ with multiplication $w_iw_k=\sum r_{ijk}w_k$ and a surjection from $\mc{A}'$ to $\mc{A}$. 

We note that for any $v\in V$ we have $v=\sum v^i w_i$, where $v^i\in R$ depend algebraically on $\alpha$. Using this we construct an embedding $V\subset \mc{A}'$. We define $\alpha'$ as $\alpha'(v_1\otimes\cdots\otimes v_k)=v_1\ldots v_k\subset V\otimes R\subset \mc{A}'$. The condition $\alpha=\alpha'$ is another algebraic condition on $\alpha$. When this condition is satisfied we have an inverse map from $\mc{A}$ to $\mc{A}'$.

It follows that being a free $R$-module is an algebraic condition on $\alpha$.

Consider the algebra $\mc{A}_2=B\otimes T((A_2)_{\leq me})/(\alpha(u_1\otimes\cdots\otimes u_k)-u_1\otimes \cdots\otimes u_k)$, where we take $u_i$ from $\CN[x,y]^{G_2}$. Using the same argument as above we see that the condition that $\mc{A}_2$ is a deformation of $\CN[x,y]^{G_2}$ is another polynomial condition on $\alpha$.

Let $Y$ be the subscheme of $\tilde{Y}$ defined by these conditions.

The natural homomorphism from $\mc{A}_2$ to $\mc{A}_1$ is a deformation of $A_2\subset A_1$. Consider the images of $P_1,\ldots,P_M$ under the natural map from $(A_2)_{\leq me}\subset V$ to $\mc{A}_2$ and the images of $P_{M+1},\ldots,P_N$ under the natural map from $V$ to $\mc{A}_1$. We obtain a lift of $P_1,\ldots,P_N$ to $\mc{A}_2\subset\mc{A}_1$.

Suppose that $\mc{A}_2\subset\mc{A}_1$ is a deformation of $\CN[x,y]^{G_2}\subset \CN[x,y]^{G_1}$ over $R$, $a_1,\ldots,a_N$ is a lift of $P_1,\ldots,P_N$. We take $V=\Span(a_1,\ldots,a_N)$ and define $\alpha(u_1\otimes\cdots\otimes u_k)$ to be $u_1\ldots u_k\in V\otimes R\subset \mc{A}_1$.

By construction two maps above are inverse to each other.

%Y-def-Y obvious. Another: we can map T(V)/I into \mc{A} and get a deformation of id.

Let $U$ be the subgroup of $\GL(V)$ consisting of all $\Phi\colon V\to V$ with $\Phi(f)-f\in R[u,v]^{<\deg f}$ for all homogeneous $f$ and $\Phi(f)\in \CN[u,v]^{G_2}$ for all $f\in \CN[u,v]^{G_2}$. The action of a homogeneous $R$-point of $U$ on a homogeneous $R$-point of $Y$ is by conjugation. We see that $U$ is a group scheme, $\CN[U]$ is positively graded, the action of $U$ on $Y$ is algebraic and respects grading and $U(R)$-orbits correspond to isomorphism classes of deformations over $R$.
\end{proof}

Proposition~\ref{ZExistsProp} gives us an element $z_{\alpha}\in R$ for each $\alpha\in \Hom(\CN[Y],R)$. There exists an element $z\in\CN[Y]$ such that $z_{\alpha}=\alpha(z)$ for all $R,\alpha\in \Hom(\CN[Y],R)$: for example, we can take the coefficient on $\{a,b\}$ in $\alpha(a\otimes b - b\otimes a)$ for any $a,b\in (A_1)_{\leq m}$ such that $\{a,b\}\neq 0$. In particular, $\CN[Y]$ is a $\CN[z]$-module.

%There exists an action of $\CN^{\times}$ on $Y$: $t\in \CN^{\times}$ acts on homogeneous $f\in \CN[u,v]^{G_1}_{\leq me}$ by $t.f=t^{-\deg f}f$ and on $z$ by $t.z=t^{-2}z$. We see that $\CN[Y]$ is positively graded with respect to this action. Note that the action of $\CN^{\times}$ normalizes $U$.

%The conditions on $Y$ imply that $Y$ is a subscheme of \[\bigoplus\limits_{k=1}^e\bigoplus\limits_{l_1=1}^m\cdots\bigoplus\limits_{l_k=1}^m\Hom((A_1)_{l_1}\otimes\cdots\otimes (A_1)_{l_k},(A_1)_{\leq l_1+\cdots+l_k}).\] Hence $\CN[Y]$ is positively graded.
\subsection{Main theorem}
Recall that we have a chosen basis $P_1,\ldots,P_N$ adapted to the flag $(\CN[u,v]^{G_2})_{\leq me}\subset(\CN[u,v]^{G_1})_{\leq me}$.

%Consider a functor $F$ from the category of graded algebras to $\mathbf{Set}$. We define $F$ as follows. If $B$ is a graded algebra, $F(B)$ is the set of isomophism classes of deformations with a chosen lift of $P^1_i,P^2_i$. Suppose that $\phi\colon B\to B_1$ is a homorphism of graded algebras, so $B_1$ becomes a $B$-module. Then $-\otimes_B B_1$ is a mapping from $F(B)$ to $F(B_1)$ by Statement~\ref{BaseTensoringStat}.

%It follows from~\ref{RepresentingObjectForFStat} that $F$ is represented by $\CN[Y]$.

%Suppose that $\mc{A}_2\subset\mc{A}_1$ is a universal deformation of $i$ over $\tilde{B}$. Recall that $\tilde{B}$ is a polynomial algebra. Hence there exists a linear space $L$ such that $\tilde{B}\cong \CN[L]$. Let $(z,\mathbf{b},M)$ be a $B$-point of $\CN\times L\times U$. $\mathbf{b}$ gives us a homomorphism from $\tilde{B}$ to $B$. So we have a commutative deformation $\mc{A}_2\subset\mc{A}_1$ of $i$. Using the construction from Corollary~\ref{DefWithBasisFromCommCor} we get a deformation of $i$ with a chosen basis. Applying $U$ to this basis we get a deformation of $i$ with another chosen basis.

Lemma~\ref{LemNonCommCBHDeformation} gives us a deformation $\mc{A}_2\subset\mc{A}_1$ over $\CN[z]\otimes R_0$ that gives a universal commutative deformation when we set $z$ to $0$. Choosing a lift of $P_1,\ldots,P_N$ in $\mc{A}_2\subset\mc{A}_1$ and using Proposition~\ref{RepresentingObjectForFStat} we get a homomorphism of graded algebras from $\CN[Y]$ to $\CN[z]\otimes R_0$. Since $U$ acts on $Y$ we have a homomorphism from $\CN[Y]\to \CN[Y]\otimes \CN[U]$. Combining these two homomorphisms we get a homomorphism $\phi$ from $\CN[Y]$ to $\CN[z]\otimes R_0\otimes\CN[U]$.

Lemma~\ref{ZequalsZlem} tells us that $\phi(z)=z$. Hence $\phi$ is also a homomorphism of $\CN[z]$-modules.

If we specialize $z$ to $0$ we get a homomorphism $\phi_0\colon \CN[Y]/(z)\to R_0\otimes \CN[U]$. The graded algebra $\CN[Y]/(z)$ parametrizes commutative deformations with a chosen lift of $P_1,\ldots,P_N$, the graded algebra $R_0$ parametrizes commutative deformations, therefore $\phi_0$ is isomorphism.

Both $\CN[Y]$ and $\CN[z]\otimes \CN[L]\otimes\CN[U]$ are positively graded $\CN[z]$-modules, $\CN[z]\otimes\CN[L]\otimes\CN[U]$ is a free $\CN[z]$-module, $\phi$ is a homomorphism of graded modules such that $\phi_0$ is an isomorphism. Using graded Nakayama's lemma we see that  $\phi$ is an isomorphism.

%Denote $\Spec\tilde{B}$ by $L$. Since $\phi$ was $U$-equivariant, it follows that all deformations of $i$ are parametrized by $\CN\times L$. 

%Using Statement~\ref{EveryMorphismIsABaseChangeStat} we see that $\mc{A}_2\subset\mc{A}_1$ is a universal deformation.
\begin{thr}
\begin{enumerate}
\item	
Suppose that $\mc{O}^j$ is the CBH algebra with parameter with parameter \[\sum_{i=1}^m z_ih_i+z_0f\in Z(\CN[z_0,\ldots,z_m][G_j]),\] this is a deformation of $\CN[u,v]^{G_j}$ over $\CN[z_0,\ldots,z_m]$. Then $G\otimes H$ acts on $\mc{O}^1$ and $(\mc{O}^2)^H\subset(\mc{O}^1)^H$ is a universal deformation of $\CN[u,v]^{G_2}\subset\CN[u,v]^{G_1}$.
\item
In case of filtered quantizations every deformation of $i$ is of the form $\mc{O}^2_c\subset\mc{O}^1_c$, where $c\in Z(\CN[G_1])\cap Z(\CN[G_2])$. Parameters $c$ and $c'$ give isomorphic deformations if and only if there exists $w\in H$ such that $c'=wc$.
\end{enumerate}
\end{thr}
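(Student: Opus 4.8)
The plan is to obtain both parts from the universality proved just above the statement, together with the base-change formalism of Section~\ref{DefSec}. The first part needs only a change of notation. The pair $\mc{A}_2\subset\mc{A}_1$ over $\tilde{B}\otimes\CN[z]$ of Remark~\ref{NonCommCBHRem} was just shown to be universal, and it is the $H$-invariant part of the CBH pair with parameter $\sum_{i=1}^k t_ih_i+zf$. Writing the $H$-invariant element $f$ as $1_{G_1}+\sum_{i\geq1}a_ih_i$ (legitimate since $f$ has coefficient $1$ on the identity while each $h_i$ has coefficient $0$ there), the substitution $z_0=z$, $z_i=t_i+za_i$ is an invertible change of the degree-two base coordinates carrying $\sum_{i=1}^k t_ih_i+zf$ to $\sum_{i=0}^m z_ih_i$ with $h_0=1_{G_1}$. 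Since $f$ and $1_{G_1}$ are $H$-invariant this change is $H$-equivariant, so it splits off the identity direction $z_0$ from the twisted-root-space coordinates on which $H$ acts linearly; under it $\CN[z_0,\ldots,z_m]^H$ is identified with $\tilde{B}\otimes\CN[z]$ and the two CBH pairs agree. Hence $(\mc{O}^2)^H\subset(\mc{O}^1)^H$ is the universal deformation of $i$.

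For the second part I would translate everything into deformations over $\CN[t]$. By the Rees correspondence of Section~\ref{DefSec} a filtered deformation of $i$ is the same as a deformation of $i$ over $\CN[t]$, and---this is where the filtered (rather than graded-over-base) setting matters---an isomorphism of filtered deformations becomes a $\CN[t]$-linear isomorphism of the Rees pairs that fixes $t$. By universality and Statement~\ref{EveryMorphismIsABaseChangeStat}, every deformation of $i$ over $\CN[t]$ is the base change of $(\mc{O}^2)^H\subset(\mc{O}^1)^H$ along a graded homomorphism $\theta\colon\CN[z_0,\ldots,z_m]^H\to\CN[t]$, and two of them are isomorphic through a $t$-fixing map precisely when the corresponding $\theta$ coincide; the ``only if'' here is exactly the uniqueness of the classifying morphism from a universal object, via Corollary~\ref{EqualIffRestictionsAreEqualCor}. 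Because the $z_i$ and $t$ all sit in degree two and $\CN[z_0,\ldots,z_m]^H$ separates $H$-orbits, such homomorphisms are in bijection with $H$-orbits of parameters $c\in Z(\CN[G_1])\cap Z(\CN[G_2])$, the orbit of $c=\sum c_ih_i$ corresponding to $z_i\mapsto c_it$.

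Finally I would make this bijection concrete: performing the base change attached to the orbit of $c$ and then specializing $t=1$ turns $(\mc{O}^j)^H$ into the CBH algebra $\mc{O}^j_c$---this last identification is Lemma~\ref{MorphismFromCBHLemma}---so the deformation of $i$ indexed by the orbit of $c$ is exactly $\mc{O}^2_c\subset\mc{O}^1_c$, the compatibility of this inclusion with the isomorphisms being automatic since we work with deformations of $i$ (and is realized by the commuting $H$- and $G_2/G_1$-actions of Corollary~\ref{CorUniversalCBHDeformation}). Combining the steps shows that every filtered deformation of $i$ has the asserted form and that $\mc{O}^2_c\subset\mc{O}^1_c$ is isomorphic to $\mc{O}^2_{c'}\subset\mc{O}^1_{c'}$ if and only if $c$ and $c'$ lie in a common $H$-orbit, i.e.\ $c'=wc$ for some $w\in H$. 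I expect the only genuine obstacle to be keeping the base-change bookkeeping honest: one must check that an isomorphism of filtered deformations forces the induced automorphism of $\CN[t]$ to be the identity, so that the $H$-orbit of $c$---and not its $W$-orbit nor its class up to rescaling---is the complete invariant, and that base change of the CBH construction commutes with taking $H$-invariants and with the specialization $t=1$.
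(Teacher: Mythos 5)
Your proposal follows the same route as the paper's own (very terse) proof: part~1 is the change of coordinates implicit in Remark~\ref{NonCommCBHRem}, and part~2 is the chain ``filtered deformation $=$ deformation over the Rees base $\leftrightarrow$ graded homomorphism out of the universal base $\leftrightarrow$ $\CN$-point of $\CN[z_0,\ldots,z_m]^H$ $\leftrightarrow$ $H$-orbit of parameters,'' where the last step is exactly Statement~\ref{CategoricalQuotientStat} applied to $\CN[z_0,\ldots,z_m]^H\subset\CN[z_0,\ldots,z_m]$ (your phrase ``separates $H$-orbits'' is that statement). Your part~2 is in fact more careful than the paper's two-sentence argument: you make explicit why isomorphism classes biject with classifying maps, and why the specialization at a parameter $c$ is the pair $\mc{O}^2_c\subset\mc{O}^1_c$ (Lemma~\ref{MorphismFromCBHLemma} plus compatibility of invariants with base change); those checks are routine and go through.

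One assertion in part~1 is, however, false, even though the step it is meant to justify survives. You claim that ``$f$ and $1_{G_1}$ are $H$-invariant.'' The element $1_{G_1}$ is \emph{not} $H$-invariant: the $W$-invariants in $Z(\CN[G_1])$ are exactly the line $\CN\sum_{g\in G_1}g$, since $W$ acts on the hyperplane of elements with zero coefficient on the identity by (the dual of) the reflection representation, which has no nonzero invariants; likewise $H$ acts on $\Span(h_1,\ldots,h_k)$ as the Weyl group of the irreducible folded root system, so $\Span(h_1,\ldots,h_k)^H=0$. In fact, were $1_{G_1}$ $H$-fixed, the difference $f-1_{G_1}=\sum_i a_ih_i$ would be an $H$-invariant vector of $\Span(h_1,\ldots,h_k)$, forcing $f=1_{G_1}$ and all $a_i=0$, i.e.\ your substitution would be the trivial relabeling; the element $f$ is introduced in Subsection~\ref{NonCommFromPoissonSubsec} precisely because $1_{G_1}$ is not fixed (concretely $f=\sum_{g\in G_1}g$, so all $a_i=1$). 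What the argument actually needs is: $H$ fixes $f$, preserves $\Span(h_1,\ldots,h_k)$, and preserves the coefficient-on-$1_{G_1}$ \emph{functional} (this last is what the paper asserts). Equivalently, $(z,t_1,\ldots,t_k)$ and $(z_0,\ldots,z_m)$ are coordinates dual to two bases of the single $H$-space $Z(\CN[G_1])\cap Z(\CN[G_2])$, so the change of basis automatically intertwines the induced $H$-actions on the coordinate rings, and $\CN[z_0,\ldots,z_m]^H\cong\tilde{B}\otimes\CN[z]$ follows. With that one-line repair (and the cosmetic fix that with the paper's Rees variable of degree $1$ the classifying map is $z_i\mapsto c_it^2$, not $c_it$), your proof is correct and matches the paper's.
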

\begin{proof}
First statement is clear from the discussion before theorem and the description of $\mc{A}_2\subset\mc{A}_1$ in Lemma~\ref{LemNonCommCBHDeformation}.

Recall that filtered quantization is the same as a deformation over $\CN[z]$. Since homomorphisms of graded algebras from $B$ to $\CN[z]$ are in a natural one-to-one correspondence with $\CN$-points of $B$, the second claim follows from Statement~\ref{CategoricalQuotientStat} applied to $\CN[z_0,\ldots,z_m]^H\subset\CN[z_0,\ldots,z_m]$.
\end{proof}
\bibliography{UniversalArrowsBibliography}
\bibliographystyle{ieeetr}
\textsc{Department of Mathematics, MIT, 77 Mass. Ave, Cambridge, MA 02139}

{\it E-mail address}: \texttt{\href{mailto:klyuev@mit.edu}{klyuev@mit.edu}}
\end{document}